\documentclass[a4paper,12pt,oneside]{article}
\usepackage[utf8]{inputenc}
\usepackage[english]{babel}
\usepackage{amsmath}
\usepackage{amsthm}
\usepackage{amssymb}
\usepackage{mathtools}
\usepackage{tikz}
\usepackage{verbatim}
\usepackage{fancyvrb}
\usepackage{array}
\usepackage{adjustbox}
\usepackage{changepage}
\usepackage{subcaption}
\usepackage{breqn}
\usepackage{enumerate}
\usepackage{wrapfig}
\usepackage{bbm}
\usepackage{bm}
\usepackage{enumitem}
\usepackage{csquotes}
\usepackage{hyperref}
\usepackage{framed}
\usepackage{enumitem}
\usepackage{dsfont}
\newtheorem{defi}{Definition}[section]
\newtheorem{thm}[defi]{Theorem}

\newtheorem{prop}[defi]{Proposition}

\newtheorem{rem}[defi]{Remark}

\newtheorem{assum}{Assumption}

\newcommand{\N}{\mathbb{N}}
\newcommand{\D}{\mathbb{D}}
\newcommand{\Z}{\mathbb{Z}}

\newcommand{\C}{\mathbb{C}}
\newcommand{\R}{\mathbb{R}}
\newcommand{\E}{\mathbb{E}}

\newcommand{\eps}{\epsilon}
\newcommand{\p}{\varphi}

\newcommand{\RP}[1]{\text{Re}\left(#1\right)}
\newcommand{\IP}[1]{\text{Im}\left(#1\right)}
\newcommand{\supp}[1]{\text{supp}(#1)}

\title{Limit theorems for Coulomb gases on a Jordan curve in an external potential}
\author{Kurt Johansson\footnote{KTH Royal Institute of Technology, Sweden. E-mail address: \texttt{kurtj[at]kth.se}}, Thomas Wolfs\footnote{KTH Royal Institute of Technology, Sweden. E-mail address: \texttt{wolfs[at]kth.se}.}}
\date{}

\begin{document}

\maketitle

\begin{abstract}
We consider a Coulomb gas on a Jordan curve $\gamma$ in an external potential $V$ at inverse temperature $\beta>0$ and obtain an asymptotic expansion of the free energy up to $o(1)$ and a central limit theorem for linear statistics. We focus on the one-cut regime, where the density of the weighted equilibrium measure of $\gamma$ in $V$ is strictly positive on $\gamma$. The constant term in the (normalized) expansion consists of two parts: the Fredholm determinant of a generalized Grunsky operator and  the Dirichlet energy of the logarithm of the density of the weighted equilibrium measure of $\gamma$. The coefficient of the latter vanishes for $\beta=2$. The variance of the fluctuations of the linear statistics only depends on the Dirichlet energy of the test function and is therefore independent of $V$. Essential in our approach is that the generalized Grunsky operator and the accompanying equilibrium parametrization allow us to transport the particles on the curve in the external potential to a reference object in a way that preserves the equilibrium measure. In our setting, the unit circle is the natural reference object. 
\end{abstract}

\section{Introduction}
Let $\gamma$ be a rectifiable Jordan curve in the complex plane with $0$ in its interior. We will consider a Coulomb gas constrained to $\gamma$ in an external potential $V : \gamma \to \mathbb{R}$ at inverse temperature $\beta>0$. As a statistical mechanical system, this is the Gibbs measure 
\begin{equation} \label{CG_def}
\frac{1}{n! Z_{n,\beta}^{\gamma,V}}
\exp\left(
-\frac{\beta}{2} \mathcal{H}_n^{V}(z_1,\dots,z_n)
\right)
\prod_{k=1}^n |dz_k|,
\end{equation}
on $\gamma^n$ with Hamiltonian
\begin{equation} \label{H_n_def}
    \mathcal{H}_n^{V}(z_1,\dots,z_n) = \sum_{1\leq k\neq l\leq n}
\log \frac{1}{|z_k - z_l|}
+ n \sum_{k=1}^n V(z_k),
\end{equation}
The normalizing constant $Z_{n,\beta}^{\gamma,V}>0$ is known as the partition function of the system and is given by
\begin{equation}\label{ZnV}
Z_{n,\beta}^{\gamma,V}
=
\frac{1}{n!}
\int_{\gamma^n}
\exp\left(
-\frac{\beta}2 \mathcal{H}_n^{V}(z_1,\dots,z_n)
\right)
\prod_{k=1}^n |dz_k|.
\end{equation}

Essential in our consideration will be the notion of equilibrium parametrization, which is intimately related to objects from potential theory, see \cite{SaffTotik1997} for an introduction. Recall that the \textit{weighted equilibrium measure} $\mu_{\gamma,V}$ on $\gamma$ in a continuous external potential $V:\gamma\to\R$ is the unique minimizer among all Borel probability measures on $\gamma$ of the \textit{weighted logarithmic energy}
\[
I_{\gamma,V}[\mu]
=
\iint_{\gamma^2}
\log \frac{1}{|u-v|}
d\mu(u) d\mu(v)
+
\int_\gamma V(u) d\mu(u), 
\]
It is an elementary fact that the equilibrium measure $d\mu_{\mathbb{T},0}(z)$ on the unit circle $\mathbb{T}$ in the absence of an external potential, is given by the arc-length measure $|dz|/2\pi$. Throughout this paper, we will always impose the following assumption on $V$.

\begin{assum} \label{ass1} The potential $V:\gamma\to \R$ is continuous and such that $d\mu_{\gamma,V}(z)=w_{\gamma,V}(z) |dz|$ with $w_{\gamma,V}>0$ on $\gamma$ (known as the one-cut regime).
\end{assum}

An important property of the exterior conformal map $\phi$, which maps the exterior $\D^*$ of the unit disc $\D=\{z\in\C:|z|\,<1\}$ to the outer domain $D^*$ of $\gamma$, is that it transports the equilibrium measure on $\mathbb{T}$ to the equilibrium measure on $\gamma$. Due to the inclusion of the external potential in \eqref{CG_def}, in contrast to \cite{Johansson2022,CourteautJohansson2025} where $V=0$, we won't be able use this transport property anymore to analyze the properties of the Coulomb gas. The idea of using a different kind a transport map first appeared in \cite{CourteautJohanssonViklund2026} to study Coulomb gases on a Jordan arc and led to the introduction of the arc-Grunsky operator. The latter can be seen as the Jordan arc analogue of the classical Grunsky operator, see \cite{Pommerenke1975} for an introduction. Our approach will further build on this and will introduce a more general equilibrium parametrization and Grunsky operator, allowing us to transfer the equilibrium measure of a wide variety of reference objects to objects in an external potential. For a sufficiently regular Jordan curve in a one-cut potential, the unit circle $\mathbb{T}$ will be a natural reference object.
\medbreak

Our main interest lies in describing an asymptotic expansion of the free energy $\log Z_{n,\beta}^{\gamma,V}$ as $n\to\infty$. This is motivated by the fact that the coefficients in this expansion encode information about the underlying physical model as predicted by \cite{WiegmannZabrodin2022,WiegmannZabrodin2006} in the physics literature. Particular interest goes to the constant term in the expansion, which is believed to be intimately related to conformal field theory, and is typically the most challenging to describe rigorously. We will establish the free energy expansion up to $o(1)$ for Coulomb gases on a Jordan curve in a general, sufficiently regular, one-cut potential at arbitrary $\beta>0$. The constant term in the (normalized) expansion will consist of two parts: the Fredholm determinant of a generalized Grunsky operator and the Dirichlet energy of the logarithm of the density of the weighted equilibrium measure on $\gamma$. The coefficient of the latter will vanish for $\beta=2$, and for $V=0$, the remaining part will be a scalar multiple of the Loewner energy of the curve, see \cite{Wang19,JohanssonViklund26}.
\medbreak

A major step into describing the free energy expansion of a Coulomb gas on a Jordan curve was taken in \cite{Johansson2022}, where the case $V=0$ and $\beta=2$ was treated, and the constant term was described in terms of a Fredholm determinant involving the classical Grunsky operator. An analogous expansion at any $\beta>0$ was proven later in \cite{CourteautJohansson2025}. Another work in this direction is \cite{BlackstoneCharlierLenells2024}, which, using Riemann-Hilbert methods for Toeplitz determinants, treats the case where $\gamma=\mathbb{T}$, the potential is one-cut regular and $\beta=2$.

Asymptotic expansions of the free energy have also been established for planar Coulomb gases rather than Coulomb gases on a curve. In that setting, the Jordan curve $\gamma$ and arc-length measures $|dz_k|$ are replaced by the complex plane $\C$ and the area measures $dA(z_k)$ respectively. In the seminal work \cite{LebleSerfaty2017}, an asymptotic expansion of the free energy up to $o(n)$ was obtained for planar Coulomb gases in a general, sufficiently regular, confining potential at arbitrary $\beta > 0$, see also \cite{Bauerschmidt_etal}. More recently, at $\beta=2$, expansions up to $o(1)$ have been established for planar Coulomb gases in a general, sufficiently regular, radially symmetric potential, see~\cite{ByunKangSeo2023,AmeurCharlierCronvall2026}, as well as in certain special Ginibre-type non-radially symmetric potentials, see~\cite{ByunSeoYang2025,Byun2025,ByunYangYoo2026}. To the best of our knowledge, an expansion up to $o(1)$ involving a general class of non-radially symmetric potentials at arbitrary $\beta>0$ remains open. As described above, we will provide such an expansion in the setting of a Coulomb gas on a Jordan curve.
\medbreak

Our second interest lies in a central limit theorem for linear statistics of the particles in the Coulomb gas \eqref{CG_def}. We will establish a central limit theorem for Coulomb gases on a Jordan curve in a general, sufficiently regular, one-cut potential at arbitrary $\beta>0$. The variance will only depend on the Dirichlet energy of the test function and will therefore be independent of $V$.
\medbreak

Let $\E_{n,\beta}^{\gamma, V}[\,\cdot\,]$ denote expectation with respect to the Coulomb gas in \eqref{CG_def} and let $g:\gamma\to \R$ be a sufficiently regular test function. In the case $\gamma=\mathbb{T}$, $V=0$ and $\beta=2$, the central limit theorem follows from the classical strong Szeg\H{o} limit theorem for Toeplitz determinants, see \cite{Johansson88,Simon}, which implies that 
\begin{equation}\label{ThmSz_0}
    \lim_{n\to\infty}\log \E_{n,2}^{\mathbb{T}, 0}\left[e^{\sum_{k=1}^ng(z_k)-n\int_{\mathbb{T}}g(z)\frac{|dz|}{2\pi}}\right]= \frac{1}{8\pi} \int_{\C\setminus\gamma} |\nabla g_{\rm ext}|^2 \, dxdy,
\end{equation}
where $g_{\rm ext}$ denotes the extension of $g$ that is bounded and harmonic on the connected components of $\C\setminus\gamma$. The analogous result at arbitrary $\beta>0$ was established in \cite{Webb2016}. Mesoscopic and high temperature variants of \eqref{ThmSz_0} were obtained in \cite{Lambert2019} and \cite{HardyLambert2021} respectively. Generalizations to sufficiently regular Jordan curves $\gamma$ and $V=0$ were described in \cite{Johansson88,Johansson2022} at $\beta=2$ and in \cite{CourteautJohansson2025} at arbitrary $\beta>0$. The case where $\gamma=\mathbb{T}$, the potential is one-cut regular and $\beta=2$ is covered by \cite{BlackstoneCharlierLenells2024}.

Central limit theorems for planar Coulomb gases in general, sufficiently regular, confining potentials were obtained in \cite{AmeurHedenmalmMakarov2011} at $\beta=2$ and in \cite{LebleSerfaty2018,Bauerschmidt_etal} at arbitrary $\beta>0$.

\section{Main results}

For the convenience of the reader, we will first give an overview of some of the results in \cite{Johansson2022,CourteautJohansson2025} where $V=0$. Recall that the exterior conformal map $\phi:\D^*\to D^*$ has the expansion
\[
\phi(z)=\text{cap}(\gamma)z+\phi_0+\phi_{-1}z^{-1}+\dots,\quad z\to\infty,
\]
where $\text{cap}(\gamma)>0$ is the (ordinary) logarithmic capacity of $\gamma$. If $u,v\in\mathbb{D}^\ast$, there exists $a_{k,l}\in\C$, known as the \textit{Grunsky coefficients}, such that
\begin{equation}\label{ClGr}
\log\frac{\phi(u)-\phi(v)}{u-v}=\log(\text{cap}(\gamma))-\sum_{k, l\ge 1}a_{k l}u^{-k}v^{- l}.
\end{equation}
This identity is often referred to as the Grunsky expansion. The classical \textit{Grunsky operator} $B: \ell^2(\Z_{\geq 1})\to \ell^2(\Z_{\geq 1})$ is defined as $B=(b_{kl})_{k,l\geq 1}$ where $b_{kl}=\sqrt{kl}a_{kl}$, see \cite{Pommerenke1975}. It was proven in \cite{Johansson2022} that if $B$ is a Hilbert-Schmidt operator, then
\begin{equation}\label{Vzero}
\lim_{n\to\infty}\log\frac{Z_{n,2}^{\gamma,0}}{\text{cap}(\gamma)^{n^2}(2\pi)^n}=-\frac 12\log\det(I-BB^*).
\end{equation}
As a consequence, the free energy admits the following asymptotic expansion
\begin{equation}\label{Vzero_FE}
     \log Z_{n,2}^{\gamma,0} = n^2 \log \text{cap}(\gamma) + n\log (2\pi) - \frac 12\log\det(I-BB^*) + o(1),\quad n\to\infty.
\end{equation}
The quantity on the right hand side of \eqref{Vzero} is, up to a multiplicative constant, the Loewner energy of the Jordan curve $\gamma$, see \cite{Wang19,JohanssonViklund26}. An analogous limit to \eqref{Vzero} at any $\beta>0$ was described in \cite{CourteautJohansson2025}. There, a real version of $B$ appears, which arises by taking the real part of \eqref{ClGr}. Doing so, we obtain
\begin{equation}\label{ReGr}
\log\left|\frac{u-v}{\phi(u)-\phi(v)}\right|=-\log(\text{cap}(\gamma))+
\Psi(u)^t K \Psi(v)
, \quad u,v\in \mathbb{T},
\end{equation}
in terms of the basis
\[
\Psi(e^{it})=
\begin{pmatrix} (\frac 1{\sqrt{k}}\cos(kt))_{k\ge 1} \\
(\frac 1{\sqrt{k}}\sin(kt))_{k\ge 1}\end{pmatrix},\quad t\in[0,2\pi],
\]
and an operator $K$, which is related to the classical Grunsky operator via
\begin{equation}\label{clK}
   K=\begin{pmatrix} {\rm{Re}}\,B & {\rm{Im}}\,B  \\ {\rm{Im}}\,B & -{\rm{Re}}\,B \end{pmatrix}.
\end{equation}
The operator $K$ is a symmetric operator on $ \ell^2(\Z_{\geq 1})\oplus  \ell^2(\Z_{\geq 1})$ and satisfies
$$\det(I+K)=\det(I-BB^*).$$
There is also a natural connection to the classical Neumann-Poincaré operator, see \cite[Prop. 2.7]{CourteautJohansson2025}.
\medbreak

The \textit{equilibrium parametrization} $z_e:\mathbb{T}\to\gamma$ of the Jordan curve $\gamma$ in the external potential $V$ by $\mathbb{T}$ will be defined through
\[
\mu_{\gamma,V}([z_e(1),z_e(e^{it})])=\frac {t}{2\pi},\quad t\in[0,2\pi],
\]
where $[z_e(1),z_e(e^{it})]$ is the positively oriented arc on $\gamma$ from $z_e(1)$ to $z_e(e^{it})$. We will show that, in analogy with \eqref{ReGr}, the accompanying kernel
\begin{equation}\label{genGr}
    \mathcal{G}(u,v) = \log\left|\frac{u-v}{z_e(u)-z_e(v)}\right|+\frac 12(V(z_e(u))+V(z_e(v))),\quad u,v\in\mathbb{T},
\end{equation}
admits an expansion of the form
\begin{equation}\label{weK}
\mathcal{G}(u,v)=-\log(\text{cap}_V(\gamma))+ 
\Psi(u)^t K_{\mathbb{T}}^{\gamma,V}
\Psi(v),\quad u,v\in\mathbb{T},
\end{equation}
in terms of an operator $K_{\mathbb{T}}^{\gamma,V}$ on $ \ell^2(\Z_{\geq 1})\oplus  \ell^2(\Z_{\geq 1})$, which we will call the \textit{weighted Grunsky operator}. Recall that $\text{cap}_V(\gamma)$ denotes the \textit{weighted logarithmic capacity} of $\gamma$, which is given by
\[
\text{cap}_V(\gamma)=e^{-I_{\gamma,V}[\mu_{\gamma,V}]}.
\]
For $V=0$, it follows from \cite[Thm. 4.3.4 \& 4.3.14]{Ransford1995} that $z_e=\left.\phi\right|_{\mathbb{T}}$, so that the operator $K_{\mathbb{T}}^{\gamma,0}$ is precisely the operator $K$ in \eqref{clK}. 
\medbreak

In Section \ref{eq_par}, we will study the notions of equilibrium parametrization and generalized Grunsky operator in a somewhat general framework, starting from a general contour in an external potential and reference contour, together with some compatibility conditions on their equilibrium measures. Due to the inclusion of the external potential in \eqref{genGr}, properties of the classical Grunsky operator, which are typically proven via geometric function theory, don't necessarily carry over to the generalized Grunsky operator. Interestingly, we will be able to show that many of the important properties of the classical Grunsky operator do still carry over, purely based on arguments from potential theory.  Importantly, we will show that there is still a one-sided strengthened Grunsky inequality, see \cite[\S 9.4]{Pommerenke1975} for the two-sided version in the classical setting. It will follow from these properties that, under suitable regularity assumptions on $\gamma$ and $V$, the Fredholm determinant $\det(I+K_{\mathbb{T}}^{\gamma,V})$ of the generalized Grunsky operator $K_{\mathbb{T}}^{\gamma,V}$ in \eqref{weK} is a well-defined strictly positive quantity.
\medbreak

Before stating our main result, we have to introduce some additional notation. Recall that the Dirichlet inner product on $\C\setminus\gamma$ of two functions $f,g:\gamma\to\C$ is defined as 
\begin{equation} \label{Dir_IP}
    \mathcal{D}_{\C\setminus\gamma}(f,g)= \frac{1}{4\pi} \iint_{\C\setminus\gamma} \nabla f_{\rm ext} \cdot \overline{\nabla g_{\rm ext}} \, dxdy,
\end{equation}
where $f_{\rm ext}$ and $g_{\rm ext}$ denote the extensions of $f$ and $g$ that are bounded and harmonic on the connected components of $\C\setminus\gamma$. The Dirichlet energy of $f$ is then given by
    $$ \mathcal{D}_{\C\setminus\gamma}(f) = \mathcal{D}_{\C\setminus\gamma}(f,f), $$
so that
    $$ \mathcal{D}_{\C\setminus\gamma}(f)= \frac{1}{4\pi} \int_{\C\setminus\gamma} |\nabla f_{\rm ext}|^2 \, dxdy. $$
Define the \textit{normalized partition function} by
$$\bar{Z}_{n,\beta}^{\gamma,V}
= Z_{n,\beta}^{\gamma,V} \exp\left(\frac{\beta}{2}n^2 I_{\gamma,V}[\mu_{\gamma,V}] + \bigg(1-\frac{\beta}{2}\bigg)n\int_{\gamma}\log w_{\gamma,V}d\mu_{\gamma,V}\right).
$$
Note that $\bar{Z}_{n,\beta}^{\mathbb{T},0}=(2\pi)^{(\frac{\beta}{2}-1)n} Z_{n,\beta}^{\mathbb{T},0}$ and that $Z_{n,\beta}^{\mathbb{T},0}$ is given by a Selberg integral, see \cite[Thm. 12.1.1]{Mehta2004}, which evaluates to
    $$Z_{n,\beta}^{\mathbb{T},0} = \frac{(2\pi)^n}{n!} \frac{\Gamma(1+\frac{\beta n}{2})}{\Gamma(1+\frac{\beta}{2})^n}. $$
Our main result is now as follows. We will prove this in Section \ref{proofs}.

\begin{thm} \label{PF}
Let $\gamma$ be a $C^{11,\alpha}$ Jordan curve and assume that $V\in C^{11,\alpha}(\gamma)$ satisfies Assumption \ref{ass1}. Let $K_{\mathbb{T}}^{\gamma,V}$ be the weighted Grunsky operator on $ \ell^2(\Z_{\geq 1})\oplus  \ell^2(\Z_{\geq 1})$ defined by \eqref{weK}. Then,
    $$\lim_{n\to\infty}\log \frac{\bar{Z}_{n,\beta}^{\gamma,V}}{\bar{Z}_{n,\beta}^{\mathbb{T},0}} = -\frac{1}{2} \log\det(I+K_{\mathbb{T}}^{\gamma,V}) + \frac{1}{2} \bigg(\sqrt{\frac{2}{\beta}}-\sqrt{\frac{\beta}2}\bigg)^2\mathcal{D}_{\C\setminus\gamma} (\log w_{\gamma,V}).$$
\end{thm}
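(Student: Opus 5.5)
We transport the Coulomb gas on $\gamma$ to the unit circle through the equilibrium parametrization $z_e:\mathbb{T}\to\gamma$. Writing $z_k=z_e(u_k)$, we have $|dz_k| = |z_e'(u_k)|\,|du_k|$. By the definition of $z_e$, $w_{\gamma,V}(z_e(u))|z_e'(u)| = 1/2\pi$, so $|dz_k| = (2\pi w_{\gamma,V}(z_e(u_k)))^{-1}|du_k|$. Inserting \eqref{genGr} and \eqref{weK} into the Hamiltonian gives
\[
\sum_{k\neq l}\log\frac{1}{|z_k-z_l|}+n\sum_k V(z_k)=\sum_{k\neq l}\log\frac1{|u_k-u_l|}+n(n-1)\log\frac{1}{\text{cap}_V(\gamma)}+\sum_{k\neq l}\Psi(u_k)^tK\Psi(u_l)+\sum_k V(z_e(u_k)),
\]
with $K=K_{\mathbb{T}}^{\gamma,V}$. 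Since $\log\text{cap}_V(\gamma)=-I_{\gamma,V}[\mu_{\gamma,V}]$, the ratio $\bar Z^{\gamma,V}_{n,\beta}/\bar Z^{\mathbb{T},0}_{n,\beta}$ becomes an expectation under the circular $\beta$-ensemble $\E_{n,\beta}^{\mathbb{T},0}$. The quantity averaged is
\[
\exp\Big(-\tfrac\beta2 X_n^tKX_n + \sum_k h(u_k)\Big),\qquad X_n=\sum_k\Psi(u_k),
\]
where $h$ collects the one-body terms. These are the diagonal correction $\tfrac\beta2\Psi(u)^tK\Psi(u)$, the term $-\tfrac\beta2 V(z_e(u))$, the term $\tfrac\beta2\log\text{cap}_V(\gamma)$ (the $n\cdot$ part), and $-\log(2\pi w_{\gamma,V}(z_e(u)))$. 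The constants combine with the normalizing $(1-\tfrac\beta2)n\int\log w\,d\mu$ in $\bar Z$. The diagonal of \eqref{weK} at $u=v$ expresses $V(z_e(u))$ through $\log|z_e'(u)|$ and $\Psi^tK\Psi$. Using this, $h$ reduces to $(1-\tfrac\beta2)$ times $-\log(2\pi w\circ z_e)$, plus $O(1)$ constants that should cancel exactly against the centering $n\int \log w\, d\mu$ (mean-zero) pieces.

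The main step is to evaluate
\[
\E_{n,\beta}^{\mathbb{T},0}\Big[e^{-\frac\beta2X_n^tKX_n+\sum_k f(u_k)-n\int f\frac{|du|}{2\pi}}\Big],\qquad f=-(1-\tfrac\beta2)\log w\circ z_e,
\]
in the limit. I would linearize the quadratic form by a Hubbard--Stratonovich transform. Decompose $K=K_+-K_-$ into positive and negative parts. The negative part is harmless since $-\tfrac\beta2X^t(-K_-)X$ is a positive Gaussian average; for the positive part one uses the one-sided strengthened Grunsky inequality to justify the Gaussian representation with imaginary shifts. Either way the expectation becomes a Gaussian average over $\xi\in\ell^2\oplus\ell^2$ of the linear statistic $\E[e^{\sum_k g_\xi(u_k)}]$ with $g_\xi=f+\sqrt{\beta}\,\xi^t\Psi$-type test functions. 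For these, the circular-$\beta$ Szeg\H{o} theorem of \cite{Webb2016} gives $\exp\big(\tfrac1\beta\sum_k k(|\hat g_k|^2)\big)$ asymptotically, i.e.\ $\exp(\tfrac{2}{\beta}\mathcal D_{\C\setminus\mathbb{T}}(g))$ in the normalization \eqref{Dir_IP}, plus a mean shift $(\tfrac1\beta-\tfrac12)\sum\hat g_{\cdot}$-type term from the $\beta\neq2$ correction. Performing the Gaussian integral then produces three contributions. The first is $\det(I+K)^{-1/2}$. The second is a quadratic form in $f$ of the type $\tfrac{2}{\beta}\langle f,(I+K)^{-1}f\rangle_{\mathcal D}$. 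The third is the cross term with the $O(1)$ one-body pieces.

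The remaining identification is that this quadratic form, together with the $\beta$-dependent mean term, collapses to $\tfrac12(\sqrt{2/\beta}-\sqrt{\beta/2})^2\mathcal D_{\C\setminus\gamma}(\log w_{\gamma,V})$. I would prove a transformation identity relating $(I+K)^{-1}$ on the Dirichlet space of $\mathbb{T}$ to the Dirichlet space of $\C\setminus\gamma$ pulled back by $z_e$, analogous to \cite[Prop. 2.7]{CourteautJohansson2025}. Roughly, $\mathcal D_{\C\setminus\gamma}(g)=\tfrac12\langle \hat g,(I+K)^{-1}\hat g\rangle$ for $\hat g$ the Fourier data of $g\circ z_e$. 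The prefactor then becomes $(1-\tfrac\beta2)^2\cdot\tfrac2\beta\cdot\tfrac12\cdot2/\ldots$, which equals $\tfrac12(\sqrt{2/\beta}-\sqrt{\beta/2})^2$ since $(1-\tfrac\beta2)^2\tfrac2\beta=(\sqrt{2/\beta}-\sqrt{\beta/2})^2\tfrac\beta2\cdot\tfrac2\beta$. This is consistent with the vanishing at $\beta=2$.

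The expected obstacle is the rigorous justification of the limit interchange. The Szeg\H{o}-type asymptotics must hold uniformly for the random test functions $g_\xi$, including non-smooth and large $\xi$, and the quadratic form $X_n^tKX_n$ must be controlled uniformly in $n$. I would handle this by truncating $K$ to finite rank $K_N$, where the $C^{11,\alpha}$ regularity gives fast decay of the entries so that $K-K_N$ is small in trace norm. I would then control the error via a priori exponential moment bounds $\E[e^{c|X_n|^2}]\le C$ for $c$ small, obtained from loop equations / rigidity for the circular $\beta$-ensemble. The strengthened Grunsky inequality $I+K\ge\delta>0$ ensures that the Gaussian integrals converge.
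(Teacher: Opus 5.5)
Your reduction is correct, and in fact exact. Using $|z_e'(u)|=(2\pi w_{\gamma,V}(z_e(u)))^{-1}$ and the diagonal of \eqref{weK}, all one-body terms and constants combine into
\[
\frac{\bar Z^{\gamma,V}_{n,\beta}}{\bar Z^{\mathbb{T},0}_{n,\beta}}=\E^{\mathbb{T},0}_{n,\beta}\Big[e^{-\frac{\beta}{2}X_n^tKX_n+\vec f^{\,t}X_n}\Big],\qquad f=-\big(1-\tfrac{\beta}{2}\big)\log w_{\gamma,V}\circ z_e .
\]
The formal Gaussian computation with $X\sim\mathcal{N}(0,\beta^{-1}I)$ then gives $\det(I+K)^{-1/2}\exp\big(\frac{1}{2\beta}\vec f^{\,t}(I+K)^{-1}\vec f\big)$. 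This is the right answer, given the identity $\vec g^{\,t}(I+K)^{-1}\vec g=2\mathcal{D}_{\C\setminus\gamma}(g\circ z_e^{-1})$ (Propositions \ref{V_coeff} and \ref{V_DE}).

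\textbf{The gap.} The step you postpone is where all the work lies, and you have put the difficulty on the wrong side of $K$. The factor $e^{-\frac{\beta}{2}X_n^tK_+X_n}\le 1$ is harmless, and its imaginary Hubbard--Stratonovich representation needs no Grunsky inequality. The dangerous factor is $e^{+\frac{\beta}{2}X_n^tK_-X_n}$.

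Passing to the limit needs uniform integrability, which in effect means $\sup_n\E^{\mathbb{T},0}_{n,\beta}\big[e^{(1+\delta)\frac{\beta}{2}X_n^tK_-X_n}\big]<\infty$. Even for the Gaussian limit this is finite only if $(1+\delta)\|K_-\|<1$. The strengthened Grunsky inequality gives only $\|K_-\|\le\kappa<1$, with $\kappa$ possibly close to $1$.

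After Hubbard--Stratonovich you would therefore need $\E^{\mathbb{T},0}_{n,\beta}[e^{a^tX_n}]\le Ce^{(1+\delta)|a|^2/(2\beta)}$. This bound must be sharp up to the factor $1+\delta$ and uniform in $n$ and in all $a$, including $|a|$ of order $n$, which is a large-deviation regime.
\begin{itemize}
\item Webb's theorem is a limit for a fixed test function and provides nothing of this kind.
\item An a priori bound $\E[e^{c|X_n|^2}]\le C$ ``for $c$ small'' does not help, because the exponent is dictated by $K$, not chosen. Moreover, $|X_n|^2=\sum_{k\ge1}k^{-1}|\sum_j u_j^k|^2=\infty$ for every configuration.
\item Truncating $K$ to $K_N$ does not help either: H\"older's inequality pushes the main term even closer to the critical exponent.
\end{itemize}
Your proposal neither cites nor proves such a sharp uniform exponential-moment bound for the C$\beta$E, so the limit interchange is unjustified.

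\textbf{How the paper avoids this.} The paper never needs such a bound. Its interpolating gas $\E^{[s]}_{n,\beta}$ is exactly your C$\beta$E tilted by $\exp\big(-\frac{\beta}{2}sX_n^tKX_n+s\sum_k m(u_k)\big)$, with $m=(1-\frac{\beta}{2})\log|z_e^{(1)}|$. The log of the ratio is written as $\int_0^1\partial_s\log Z^{[s]}_{n,\beta}\,ds$, so only the first two moments of the $X_k$ under each tilted gas are required.
\begin{itemize}
\item The limits of these moments (mean $\frac{s}{\beta}\vec m L_s^{-1}$, covariance $\frac{1}{\beta}L_s^{-1}$, with $L_s=I+sK$) come from the CLT of Theorem \ref{av_s}. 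That CLT is proved by the change of variables $t\mapsto t+\frac1n h_s(t)$, with $h_s$ solving the master equation.
\item The uniform bounds come from Proposition \ref{s_LS_const}.
\item The strengthened Grunsky inequality enters deterministically, configuration by configuration, through Proposition \ref{G_lb}. Together with Proposition \ref{est_D_prop}, this confines each tilted gas to a rigid set of probability $1-e^{-cn}$.
\end{itemize}
As a result, only exponential moments of fixed smooth linear statistics are ever needed.

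\textbf{Smaller points.}
\begin{itemize}
\item The C$\beta$E is rotation invariant, so there is no $(\frac1\beta-\frac12)$ mean term and no separate cross term.
\item $\mathcal{D}_{\C\setminus\mathbb{T}}(g)=2\sum_{k\ge1}k|\hat g_k|^2$, so Webb's limit is $\exp(\frac{1}{\beta}\mathcal{D}_{\C\setminus\mathbb{T}}(g))$, not $\exp(\frac{2}{\beta}\mathcal{D}_{\C\setminus\mathbb{T}}(g))$.
\end{itemize}
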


A consequence of Theorem \ref{PF} is that we obtain an asymptotic expansion of the free energy up to $o(1)$ of the Coulomb gas in \eqref{CG_def}, namely
\begin{align*}
    \log Z_{n,\beta}^{\gamma,V} = &-\frac{\beta}{2} n^2 I_{\gamma,V}[\mu_{\gamma,V}] + \bigg(\frac{\beta}{2}-1\bigg) n \left( \int_{\gamma} \log w_{\gamma,V}d\mu_{\gamma,V}-\log \frac{1}{2\pi}\right) + \log Z_{n,\beta}^{\mathbb{T},0} \\
    &- \frac{1}{2} \log\det(I+K_{\mathbb{T}}^{\gamma,V}) + \frac{1}{2}\bigg(\sqrt{\frac 2{\beta}}-\sqrt{\frac{\beta}2}\bigg)^2\mathcal{D}_{\C\setminus\gamma} (\log w_{\gamma,V}) + o(1),\quad n\to\infty.
\end{align*}
The appearance of the entropy-like integral $\int_{\gamma}\log w_{\gamma,V}\,d\mu_{\gamma,V}$ at order $n$ is analogous to what was observed for planar Coulomb gases in \cite{LebleSerfaty2017}.
\medbreak

\medbreak

Specializing to $\beta=2$ then yields the following result.

\begin{thm}\label {Thmbeta2}
Under the same assumptions as in Theorem \ref{PF}, we have that
\[
\lim_{n\to\infty}\log\frac{Z_{n,2}^{\gamma,V}}{{\rm cap}_V(\gamma)^{n^2}(2\pi)^n}=-\frac 12\log\det(I+K_{\mathbb{T}}^{\gamma,V}).
\] 
\end{thm}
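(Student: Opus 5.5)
Theorem \ref{Thmbeta2} is a direct specialization of Theorem \ref{PF} to $\beta=2$, so we simply unwind the normalizations. At $\beta=2$ the coefficient $\frac12(\sqrt{2/\beta}-\sqrt{\beta/2})^2$ vanishes, so the Dirichlet energy term drops out. Likewise, the factor $(1-\frac\beta2)$ multiplying the entropy-like integral $\int_\gamma \log w_{\gamma,V}\,d\mu_{\gamma,V}$ in the definition of $\bar Z_{n,\beta}^{\gamma,V}$ vanishes. Hence
\[
\bar Z_{n,2}^{\gamma,V}=Z_{n,2}^{\gamma,V}\,e^{n^2 I_{\gamma,V}[\mu_{\gamma,V}]}=\frac{Z_{n,2}^{\gamma,V}}{{\rm cap}_V(\gamma)^{n^2}},
\]
using ${\rm cap}_V(\gamma)=e^{-I_{\gamma,V}[\mu_{\gamma,V}]}$.

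Next we compute the reference partition function. By the Selberg formula quoted before Theorem \ref{PF}, at $\beta=2$,
\[
Z_{n,2}^{\mathbb{T},0}=\frac{(2\pi)^n}{n!}\frac{\Gamma(1+n)}{\Gamma(2)^n}=(2\pi)^n .
\]
Since $\bar Z_{n,2}^{\mathbb{T},0}=(2\pi)^{0}Z_{n,2}^{\mathbb{T},0}$, this gives $\bar Z_{n,2}^{\mathbb{T},0}=(2\pi)^n$. Alternatively, $\mathrm{cap}(\mathbb{T})=1$ and $I_{\mathbb{T},0}[\mu_{\mathbb{T},0}]=0$ give the same normalization.

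Combining these two identities,
\[
\log\frac{\bar Z_{n,2}^{\gamma,V}}{\bar Z_{n,2}^{\mathbb{T},0}}=\log\frac{Z_{n,2}^{\gamma,V}}{{\rm cap}_V(\gamma)^{n^2}(2\pi)^n}.
\]
Taking $n\to\infty$ and applying Theorem \ref{PF} with $\beta=2$, whose hypotheses on $\gamma$ and $V$ are assumed here, yields the limit $-\frac12\log\det(I+K_{\mathbb{T}}^{\gamma,V})$.

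All of the difficulty is carried by Theorem \ref{PF} itself, including the finiteness and positivity of the Fredholm determinant. The only step requiring care here is matching the normalization conventions. In particular, one must check that the Selberg evaluation includes the $1/n!$ factor consistently with \eqref{ZnV}; this is confirmed by $\Gamma(1+n)=n!$ cancelling it.
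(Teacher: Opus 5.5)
Your proposal is correct and follows the paper's route exactly: the paper obtains Theorem~\ref{Thmbeta2} simply by specializing Theorem~\ref{PF} to $\beta=2$. Your bookkeeping is precisely what the paper leaves implicit: both $(1-\frac{\beta}{2})$-type coefficients vanish, so $\bar Z_{n,2}^{\gamma,V}=Z_{n,2}^{\gamma,V}\,{\rm cap}_V(\gamma)^{-n^2}$, and $\bar Z_{n,2}^{\mathbb{T},0}=Z_{n,2}^{\mathbb{T},0}=(2\pi)^n$ by the Selberg formula (your ``alternatively'' remark only gives $\bar Z_{n,2}^{\mathbb{T},0}=Z_{n,2}^{\mathbb{T},0}$, so the Selberg evaluation is still what supplies the value $(2\pi)^n$).
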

For $V=0$, by our earlier discussion, this recovers \eqref{Vzero} and establishes the connection to the Loewner energy of $\gamma$. For $\gamma=\mathbb{T}$ and real analytic $V$, this recovers \cite[Thm. 1.1]{BlackstoneCharlierLenells2024} with $W=0$ and all $\alpha_k,\beta_k=0$. 
\medbreak

The following important result will also be proven in Section \ref{proofs}.

\begin{thm}\label{ThmSz}
Let $\gamma$ be a $C^{7,\alpha}$ Jordan curve and assume that $V\in C^{7,\alpha}(\gamma)$ satisfies Assumption~\ref{ass1}. If $g\in C^{4,\alpha}(\gamma)$, then
\[
\lim_{n\to\infty} \log \E_{n,\beta}^{\gamma, V}\left[e^{\sum_{k=1}^ng(z_k)-n\int_{\gamma}g d\mu_{\gamma,V}}\right]=(1-\frac{2}{\beta}) \mathcal{D}_{\C\setminus\gamma}(g,\log w_{\gamma,V})+\frac 1{\beta}\mathcal{D}_{\C\setminus\gamma}(g).
\]
\end{thm}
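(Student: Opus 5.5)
The natural route is to reduce Theorem \ref{ThmSz} to the free energy asymptotics for a perturbed potential. Since the Hamiltonian contains $n\sum_k V(z_k)$, the observable $e^{\sum_k g(z_k)}$ is absorbed by replacing $V$ with $V_n = V - \frac{2}{\beta n} g$. This gives the exact identity
\[
\E_{n,\beta}^{\gamma,V}\Big[e^{\sum_k g(z_k)}\Big] = \frac{Z_{n,\beta}^{\gamma,V_n}}{Z_{n,\beta}^{\gamma,V}}.
\]
Applying Theorem \ref{PF} to a fixed potential is not enough, because $V_n$ varies with $n$ at scale $1/n$. I will therefore use an interpolation argument. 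For $s\in[0,1]$ set $V_{s,n}=V-\frac{2s}{\beta n}g$. Then
\[
\log\frac{Z^{\gamma,V_n}}{Z^{\gamma,V}}=\int_0^1 \E_{n,\beta}^{\gamma,V_{s,n}}\Big[\sum_k g(z_k)\Big]\,ds .
\]
The task thus becomes computing the mean of the linear statistic in the potential $V_{s,n}$ up to $o(1)$. This has to hold uniformly over a family of one-cut potentials of the form $V+\frac{t}{n}h$. For small perturbations the one-cut property and positivity of $w$ persist, and the $C^{7,\alpha}$ regularity is what keeps the error terms uniform.

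The key step is a loop (Schwinger--Dyson) equation proven via the transport suggested in the paper. I pull the gas back to $\mathbb{T}$ with the equilibrium parametrization $z_e$ and write $z_k=z_e(u_k)$. In these coordinates the energy becomes the circular $\beta$-ensemble energy plus the smooth two-body kernel $\mathcal{G}(u,v)$ from \eqref{genGr}, expanded as in \eqref{weK}, plus the one-body term $-\log|z_e'(u)|$ coming from arc length. Since $|z_e'|=1/(2\pi w_{\gamma,V}\circ z_e)$, this one-body term is $\log w_{\gamma,V}$ plus a constant. The weight enters with coefficient $1$ while everything else carries $\beta/2$, which is the source of the $(1-\frac{2}{\beta})$ factor. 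Integrating by parts along $\mathbb{T}$ against a vector field $\psi$ chosen to invert the equilibrium operator gives the expected first-order correction. For the perturbation $-\frac{2s}{\beta n} g$, the mean of $\sum_k g$ equals $n\int g\,d\mu$ plus two contributions. The first is $\frac{2s}{\beta}\mathcal{D}_{\C\setminus\gamma}(g)$, coming from the linear response of the equilibrium measure: the perturbed measure shifts by $\frac{1}{n}$ times the balayage-type density whose pairing with $g$ is the Dirichlet form. The second is $(1-\frac{2}{\beta})\mathcal{D}_{\C\setminus\gamma}(g,\log w)$, coming from the entropy term. Integrating over $s$ then yields $\frac{1}{\beta}\mathcal{D}(g)+(1-\frac{2}{\beta})\mathcal{D}(g,\log w)$. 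The identification of the linear response with $\mathcal{D}_{\C\setminus\gamma}$ uses the standard fact that $\frac{1}{2\pi}\int\log\frac{1}{|z-w|}$ composed with the harmonic-extension jump operator reproduces the Dirichlet form on both sides of $\gamma$.

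The main obstacle is the a priori control of the error terms in the loop equation. These are the covariance and variance terms of smooth statistics, together with the fluctuation term involving $\mathcal{G}$, and they must all be $o(1)$ uniformly in $s$. My first attempt would be to obtain a rigidity or concentration bound of the form $\operatorname{Var}(\sum_k f(z_k))=O(\|f\|^2)$ for $C^{k}$ test functions $f$. I would get it from the uniform partition function bounds that come out of the proof of Theorem \ref{PF}, or from a direct comparison with the circular ensemble via the one-sided strengthened Grunsky inequality, which makes the quadratic form of $K_{\mathbb{T}}^{\gamma,V}$ controllable. Then I would bootstrap. Inverting the master operator requires solving a Riemann--Hilbert-type problem on $\mathbb{T}$. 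The solution loses regularity, which is why $g\in C^{4,\alpha}$ and $\gamma,V\in C^{7,\alpha}$ are required. Finally, I would check that the $\log w$ correction term enters only through its Dirichlet pairing with $g$ once it is recentered by $\int g\,d\mu$.
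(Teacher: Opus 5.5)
Your route is sound in outline, but it is organized differently from the paper's.

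\textbf{Your route versus the paper's.} You write the exponential moment as a ratio of partition functions for the tilted potentials $V-\frac{2s}{\beta n}g$ and differentiate in $s$. This reduces the theorem to the asymptotics of the \emph{mean} of $\sum_k g(z_k)$ in each tilted ensemble, which you obtain from a first-order loop equation and then integrate over $s\in[0,1]$. The variance term $\frac1\beta\mathcal{D}_{\C\setminus\gamma}(g)=\int_0^1\frac{2s}{\beta}\mathcal{D}_{\C\setminus\gamma}(g)\,ds$ then appears as the integrated linear response of the equilibrium measure.

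The paper never tilts the potential. It applies Theorem~\ref{av_s} with $s=1$ to $g\circ z_e$: it makes the finite change of variables $t_k\mapsto t_k+\frac1n h_1(t_k)$ directly inside the Laplace transform, with $h_1$ the solution of \eqref{ME} for $\mathcal{L}_1$, and Taylor expands to second order. There the variance comes from $\frac{1}{2n^2}\mathcal{W}_1''(0)$, which contributes $-\frac12\int h_1(gz_e)^{(1)}d\mu_{\mathbb{T}}$. The limit is then identified through Propositions~\ref{V_DE} and \ref{repr_eq_dens}.

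Your loop equation is the $\eps\to0$ version of that change of variables, so both routes rest on the same inputs:
\begin{itemize}
\item the master equation;
\item the $\sqrt n$ rigidity on the essential domain (Propositions~\ref{PF_LB}, \ref{G_lb}, \ref{L_lb}, \ref{T_w_conv_rate});
\item a bootstrap that turns the remainder $\frac1n\iint[h(u)\partial_u+h(v)\partial_v]\mathcal{L}_1\,d\lambda_{z^{(n)}}d\lambda_{z^{(n)}}$ from $O(1)$ into $o(1)$.
\end{itemize}
Your version needs only a linear equation and makes the structure of the answer transparent. The paper's version gets the whole exponential moment from a single ensemble. Because it moves the tilt into the discrepancy $\mathcal{V}$, its a priori bounds are needed only under the untilted measure. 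You need them under every tilted measure; this is harmless, since a tilt of size $O(n)$ only changes the constant in Proposition~\ref{est_D_prop}.

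\textbf{Three points to fix when you write it out.}

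(i) Pull back with the $z_e$ of the unperturbed $V$, and carry $s\sum_k(g\circ z_e)(u_k)$ as a one-body term. The tilted potentials are only $C^{4,\alpha}$. Running the $C^{7,\alpha}$ machinery (Propositions~\ref{reg_G}, \ref{reg_h_s}, \ref{LS_const}) with their own equilibrium parametrizations would therefore require $g\in C^{7,\alpha}$, so your claim that ``$C^{7,\alpha}$ keeps the errors uniform'' over that family is unjustified. With the fixed $z_e$, the loop equation with $h=h_1$ gives the mean
\[
s\int h_1(gz_e)^{(1)}d\mu_{\mathbb{T}}+\left(1-\tfrac\beta2\right)\int h_1(\log|z_e^{(1)}|)^{(1)}d\mu_{\mathbb{T}}+o(1).
\]
By Propositions~\ref{V_DE} and \ref{repr_eq_dens} this is exactly your $\frac{2s}{\beta}\mathcal{D}_{\C\setminus\gamma}(g)+(1-\frac2\beta)\mathcal{D}_{\C\setminus\gamma}(g,\log w_{\gamma,V})$.

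(ii) No $O(1)$ variance bound is available a priori. Comparison with the circular ensemble via the strengthened Grunsky inequality only gives $|\int f\,d\lambda_{z^{(n)}}|\le C\|f^{(1)}\|_\infty\sqrt n$, which makes the remainder $O(1)$. Integrating the resulting $O(1)$ mean bounds over the tilt gives uniformly bounded exponential moments, also under the tilted measures; this is the analogue of Proposition~\ref{s_LS_const}. Only then does a frequency splitting give $o(1)$. Theorem~\ref{PF} cannot supply these bounds, since the paper derives it from them.

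(iii) No Riemann--Hilbert problem is needed. The master operator is inverted in Fourier space via \eqref{ME_FOUR} and Proposition~\ref{L_s_inv}. At $s=1$ it can even be inverted explicitly: $-h_1^{(1)}\circ z_e^{-1}\,d\mu_{\gamma,V}$ is the mass-zero measure whose potential equals $g/\beta$ up to a constant on $\gamma$.
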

A consequence of Theorem \ref{ThmSz} is that we have a central limit theorem for the linear statistic $\sum_{k=1}^ng(z_k)$ of the particles in the Coulomb gas, namely
$$\sum_{k=1}^n g(z_k) - n \int g d\mu_{\gamma,V} \stackrel{\rm distr}{\longrightarrow} \mathcal{N}(m_g,\sigma_g^2), $$
where the mean $m_g$ and variance $\sigma_g^2$ are given by
\[
m_g = (1-\frac{2}{\beta}) \mathcal{D}_{\C\setminus\gamma}(g,\log w_{\gamma,V}),\quad \sigma_g^2 = \frac{2}{\beta} \mathcal{D}_{\C\setminus\gamma}(g).
\]
Note that the variance $\sigma_g^2$ does not depend on the external potential $V$ and only depends on the support $\gamma$ of the equilibrium measure. This phenomenon has also been observed for  Coulomb gases on the real line in \cite{Johansson1998,BekermanLebleSerfaty2018} and for planar Coulomb gases in \cite{LebleSerfaty2018}. It would be interesting to have a heuristic explanation of this fact.
\medbreak

For $\gamma=\mathbb{T}$, $V=0$ and $\beta=2$, Theorem \ref{ThmSz} recovers \eqref{ThmSz_0}. After making the proper identifications, for a $C^{9,\alpha}$ Jordan curve $\gamma$ and $V=0$, it recovers \cite[Cor. 1.4]{CourteautJohansson2025}, while for $\gamma=\mathbb{T}$, real analytic $V$ and $\beta=2$, it recovers \cite[Cor. 1.4]{BlackstoneCharlierLenells2024}.  Compared to \cite[Cor. 1.4]{CourteautJohansson2025}, we have slightly improved the regularity from a $C^{9,\alpha}$ Jordan curve to a $C^{7,\alpha}$ Jordan curve. This is due to the fact that we manage to compare the $\log$-kernel and the kernel in \eqref{weK} directly, rather than going through its derivatives.
\medbreak

The regularity assumptions in Theorem \ref{PF} \& \ref{ThmSz} are present for technical convenience in the proofs and are not optimal. Finding and proving the optimal conditions on the curve $\gamma$ and function $g$ is presumably a rather challenging problem. If $\gamma=\mathbb{T}$, $V=0$ and $\beta=2$, then the optimal condition on $g$ in Theorem \ref{ThmSz} is that it belongs to the Sobolev space $H^{1/2}(\mathbb{T})$, see \cite{Simon2005}, and if $\beta=2$ and $V=0$, then the optimal condition in Theorem~\ref{PF} is that $\gamma$ is a Weil-Petersson quasicircle, see \cite{Johansson2022} and \cite{Bishop25} for many characterizations of Weil-Petersson quasicircles.

\section{Equilibrium parametrization} \label{eq_par}

Given appropriate contours $\gamma$ and $\tau$ in the complex plane and a continuous external potential $V:\gamma\to\R$, we will construct a map $z_e:\tau\to\gamma$, which we will call the equilibrium parametrization of $\gamma$ in $V$ by $\tau$, with the transport property
\begin{equation}\label{EP_COV}
    \int (fz_e) d\mu_\tau = \int f d\mu_{\gamma,V},
\end{equation}
for every bounded Borel function $f$ on $\gamma$. In order to show existence and some basic properties, we will assume the following.

\begin{assum} \label{ass2}
    The contours $\gamma$ and $\tau$ admit arc-length parametrizations and satisfy $d\mu_{\gamma,V}(z)=w_{\gamma,V}(z) |dz|$ with $w_{\gamma,V}>0$ on $\gamma$ and $d\mu_{\tau}(z)=w_{\tau}(z) |dz|$ with $w_{\tau}>0$ on $\tau$.
\end{assum}

Under some additional assumptions, we will show that there is a natural operator accompanying $z_e$, which we will call the generalized Grunsky operator. We will show that many of the important properties of the classical Grunsky operator carry over to this setting as well. As an example, in the last subsection, we will restrict to the setting where $\gamma$ is a Jordan curve and $\tau=\mathbb{T}$. This will be relevant in the proof of Theorem \ref{PF} \& \ref{ThmSz}.

\subsection{Existence and regularity}

Slightly abusing notation, let $\tau:[a_\tau,b_\tau]\to \tau$ and $\gamma:[a_\gamma,b_\gamma]\to \gamma$ denote the arc-length parametrization of the contours $\tau$ and $\gamma$ respectively (normalized such that $|\tau'(t)|\,=1$ for all $t\in [a_\tau,b_\tau]$ and $|\gamma'(t)|\,=1$ for all $t\in [a_\gamma,b_\gamma]$). Consider the cumulative distribution functions 
\begin{align*}
    F_{\gamma,V}(t) &= \int_{a_\gamma}^{t} (w_{\gamma,V}\gamma)(u) du,\quad t\in[a_\gamma,b_\gamma], \\
    F_{\tau}(t) &= \int_{a_\tau}^{t} (w_{\tau}\tau)(u) du,\quad t\in [a_\tau,b_\tau].
\end{align*}
Note that to shorten the notation, we wrote $(fg)$ for the composition $f\circ g$. To improve the readability, this notation will appear throughout the rest of the manuscript as well. Since $\supp{w_{\gamma,V}}=\gamma$ and $\supp{w_{\tau}}=\tau$, the maps $F_{\gamma,V}:[a_\gamma,b_\gamma]\to [0,1]$ and $F_{\tau}:[a_\tau,b_\tau]\to [0,1]$ are bijective and absolutely continuous.
\medbreak
    
We define the equilibrium parametrization of $\gamma$ in the external field $V$ by $\tau$ as the map $z_e:\tau\to\gamma$ with 
\begin{equation} \label{eq_par_def}
        z_e(u) = (\gamma F_{\gamma,V}^{-1}F_\tau\tau^{-1})(u),\quad u\in\tau.
\end{equation}
Clearly, $(z_e\tau)(a_\tau)=\gamma(a_\gamma)$ and $(z_e\tau)(b_\tau)=\gamma(b_\gamma)$.
\medbreak

The following result is an immediate consequence of the definition.

\begin{prop} \label{EP_AC}
    Suppose that Assumption \ref{ass2} holds. The map $z_e:\tau\to\gamma$ in \eqref{eq_par_def} is bijective and absolutely continuous.
\end{prop}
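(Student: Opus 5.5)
The plan is to write $z_e$ as the composition $\gamma\circ F_{\gamma,V}^{-1}\circ F_\tau\circ\tau^{-1}$ from \eqref{eq_par_def}. We then check bijectivity and absolute continuity for each factor separately and assemble the conclusions.

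\emph{Bijectivity.} The arc-length parametrizations $\tau:[a_\tau,b_\tau]\to\tau$ and $\gamma:[a_\gamma,b_\gamma]\to\gamma$ are bijective, with the usual identification of endpoints when the contour is closed. By Assumption \ref{ass2}, the densities $w_\tau$ and $w_{\gamma,V}$ are strictly positive, so $F_\tau$ and $F_{\gamma,V}$ are strictly increasing. They are continuous, being integrals of $L^1$ functions, and they map the endpoints to $0$ and $1$. Hence both are bijections onto $[0,1]$, as already noted in the text. Composing bijections gives a bijection.

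If $\tau$ and $\gamma$ are closed curves, the identification is compatible: $a_\tau,b_\tau$ map to $0,1$ and then to $a_\gamma,b_\gamma$. So the map descends to a well-defined bijection between the curves.

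\emph{Absolute continuity.} We interpret absolute continuity as absolute continuity of $t\mapsto (z_e\tau)(t)=\gamma\big(F_{\gamma,V}^{-1}(F_\tau(t))\big)$ on $[a_\tau,b_\tau]$. We treat the three factors in turn.

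\begin{itemize}
\item The map $\gamma$ is $1$-Lipschitz, since $|\gamma'|=1$.
\item The map $F_\tau$ is absolutely continuous, because it is the integral of the $L^1$ function $w_\tau\tau$.
\item The inverse $G:=F_{\gamma,V}^{-1}$ of a strictly increasing absolutely continuous function is absolutely continuous if and only if $F_{\gamma,V}'>0$ almost everywhere. This is a standard criterion, equivalent to $G$ mapping null sets to null sets (Luzin N). It holds here because $F_{\gamma,V}'=w_{\gamma,V}\gamma>0$ almost everywhere.
\end{itemize}

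For the composition, we use that the composition of a monotone absolutely continuous inner function with an absolutely continuous outer function is absolutely continuous. Thus $G\circ F_\tau$ is absolutely continuous, since $F_\tau$ is increasing and absolutely continuous and $G$ is absolutely continuous. Composing further with the Lipschitz map $\gamma$ preserves absolute continuity.

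The only step requiring care is the absolute continuity of $F_{\gamma,V}^{-1}$. One could alternatively verify it directly: $G$ is increasing, and its distributional derivative $1/(w_{\gamma,V}\gamma)(G)$ integrates correctly by the change of variables formula for absolutely continuous monotone maps. The Luzin N property follows because $F_{\gamma,V}$ maps sets of positive measure to sets of positive measure, as the density is positive almost everywhere.

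If one wants more, for instance $z_e$ of class $C^1$ with nonvanishing derivative, one needs continuity of $w_{\gamma,V}$ and $w_\tau$. Then $(z_e\tau)'=\gamma'(G F_\tau)\,w_\tau\tau/(w_{\gamma,V}\gamma)(GF_\tau)$ by the chain rule. This goes beyond the statement, however.
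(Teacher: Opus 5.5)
Your proposal is correct and takes the same route as the paper. The paper treats $z_e=\gamma F_{\gamma,V}^{-1}F_\tau\tau^{-1}$ as a composition of bijective absolutely continuous maps and calls the result immediate from the definition. You also correctly supply the two points the paper leaves implicit: $F_{\gamma,V}^{-1}$ is absolutely continuous because $F_{\gamma,V}'=w_{\gamma,V}\gamma>0$ a.e., and $F_{\gamma,V}^{-1}\circ F_\tau$ is absolutely continuous because the inner map $F_\tau$ is monotone, with the final composition with the $1$-Lipschitz map $\gamma$ preserving absolute continuity.
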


Since $z_e\in{\rm AC}(\tau)$, its derivative $(z_e\tau)'$ exists almost everywhere on $[a_\tau,b_\tau]$. We can show that it is strictly positive almost everywhere.

\begin{prop}
    Under Assumption \ref{ass2}, the map $z_e:\tau\to\gamma$ in \eqref{eq_par_def} satisfies $|(z_e\tau)'(t)|\,>0$ for a.e. $t\in[a_\tau,b_\tau]$.
\end{prop}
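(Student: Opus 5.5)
The plan is to differentiate the composition $(z_e\tau)(t) = (\gamma F_{\gamma,V}^{-1}F_\tau)(t)$ by the chain rule. Since $|\gamma'|=1$ everywhere, the claim reduces to showing that $h := F_{\gamma,V}^{-1}F_\tau$ satisfies $h'(t)>0$ for a.e. $t$. Here $h$ is an increasing bijection $[a_\tau,b_\tau]\to[a_\gamma,b_\gamma]$, and it is absolutely continuous by Proposition \ref{EP_AC}.

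First, $F_\tau$ is absolutely continuous with $F_\tau'(t) = (w_\tau\tau)(t) > 0$ for a.e. $t$, by the Lebesgue differentiation theorem and Assumption \ref{ass2}. Likewise $F_{\gamma,V}' = w_{\gamma,V}\gamma > 0$ a.e. on $[a_\gamma,b_\gamma]$.

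The key step is to get positivity of $h'$ through the inverse map rather than differentiating $F_{\gamma,V}^{-1}$ directly. By construction $F_{\gamma,V}(h(t)) = F_\tau(t)$, so
\[
F_\tau(t) - F_\tau(s) = \int_{h(s)}^{h(t)} (w_{\gamma,V}\gamma)(u)\,du.
\]
At a point $t$ where $h$ is differentiable, and $F_{\gamma,V}$ is differentiable at $h(t)$ with derivative $(w_{\gamma,V}\gamma)(h(t))$, the chain rule gives
\[
(w_{\gamma,V}\gamma)(h(t))\,h'(t) = (w_\tau\tau)(t).
\]
Wherever the right side is positive, this forces $h'(t)\neq 0$, hence $h'(t)>0$ since $h$ is increasing.

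The main obstacle is making sure the set of $t$ for which $h(t)$ lands in the null set $N$ of bad points of $F_{\gamma,V}$ is itself null. Absolutely continuous maps need not pull back null sets to null sets, so this needs an argument. I would use the identity above: $h^{-1}(N)$ is the set where $F_\tau(t)\in F_{\gamma,V}(N)$. The image $F_{\gamma,V}(N)$ is Lebesgue-null because $F_{\gamma,V}$ is absolutely continuous (Lusin property N).

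To pass back from $F_\tau(t)\in F_{\gamma,V}(N)$ to a null set of $t$, I would use the change of variables formula for monotone absolutely continuous $F_\tau$:
\[
\int_{F_\tau^{-1}(E)} F_\tau'(t)\,dt = |E| = 0 \quad\text{for } E = F_{\gamma,V}(N).
\]
Since $F_\tau'>0$ a.e., this forces $|F_\tau^{-1}(E)|=0$. Combining these null sets, the displayed chain-rule identity holds, with both sides positive and finite, for a.e. $t$. This yields $|(z_e\tau)'(t)| = h'(t) > 0$ almost everywhere.

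If a smoother route is available, for instance continuity of the densities, the whole argument collapses to the inverse function theorem. However, only integrability of the densities is assumed, so I would follow the measure-theoretic argument above.
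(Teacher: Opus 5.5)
Your argument is correct and follows the same route as the paper. You differentiate $\gamma F_{\gamma,V}^{-1}F_\tau$ by the chain rule, use $|\gamma'|=1$, and conclude from the positivity of $w_\tau$ and $w_{\gamma,V}$. The paper's version is less careful: it writes the quotient $F_\tau'(t)/F_{\gamma,V}'((F_{\gamma,V}^{-1}F_\tau)(t))$ and asserts that both derivatives equal the positive densities pointwise. You instead differentiate $F_{\gamma,V}\circ h=F_\tau$ and use Lusin's property (N) to show that the exceptional null set pulls back under $h$ to a null set, which is exactly the justification needed when the densities are only integrable.
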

\begin{proof}
    It follows from the definition that
    \begin{equation}\label{z_e_der}
        (z_e\tau)'(t) = (\gamma F_{\gamma,V}^{-1}F_\tau)'(t) = \gamma' ((F_{\gamma,V}^{-1}F_\tau)(t)) \frac{F_\tau'(t)}{ F_{\gamma,V}'((F_{\gamma,V}^{-1}F_{\tau})(t))},\quad \text{a.e. } t\in [a_\tau,b_\tau].
    \end{equation}
    It then remains to use the fact that $|\gamma'(t)|\ =1$, $F_\tau'(t) = (w_{\tau}\tau)(t)>0$ and $F_{\gamma,V}'(t) = (w_{\gamma,V}\gamma)(t)>0$ for all $t\in [a_\tau,b_\tau]$.
\end{proof}

The previous results imply that the kernel
$$\mathcal{G}(u,v) = \log\left|\frac{u-v}{z_e(u)-z_e(v)}\right| + \frac{1}{2} ((Vz_e)(u)+(Vz_e)(v)),$$
which will ultimately lead to the definition of the generalized Grunsky operator, is well-defined on $\tau\times\tau \setminus \{(u,u)\in \tau\times\tau : u\in\tau\}$ and can be extended to the diagonal $\{(u,u)\in \tau\times\tau : u\in\tau\}$ as
$$\mathcal{G}(u,u) = - \log\left|(z_e\tau)'(u)\right| + (Vz_e)(u),\quad \text{a.e. } u\in\tau.$$

Next, we show that the equilibrium parametrization satisfies $\eqref{EP_COV}$. This means that $\mu_{\gamma,V}$ is the pushforward measure of $\mu_\tau$ by $z_e$, often denoted by $\mu_{\gamma,V} = z_e\#\mu_\tau$.

\begin{prop}
    Suppose that Assumption \ref{ass2} holds. The map $z_e:\tau\to\gamma$ in \eqref{eq_par_def} is such that $\mu_{\gamma,V} = z_e\#\mu_\tau$.
\end{prop}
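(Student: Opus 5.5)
The plan is to verify the pushforward identity \eqref{EP_COV} first for indicator functions of arcs, then extend by a standard measure-theoretic argument. Work in the parameter domains. For a bounded Borel $f$ on $\gamma$,
\[
\int (fz_e)\, d\mu_\tau = \int_{a_\tau}^{b_\tau} (f z_e \tau)(t)\,(w_\tau\tau)(t)\,dt = \int_{a_\tau}^{b_\tau} (f\gamma F_{\gamma,V}^{-1})(F_\tau(t))\, F_\tau'(t)\,dt ,
\]
using \eqref{eq_par_def} and $F_\tau' = w_\tau\tau$ a.e. Since $F_\tau$ is an absolutely continuous increasing bijection $[a_\tau,b_\tau]\to[0,1]$, the change of variables $s=F_\tau(t)$ (valid for absolutely continuous monotone maps and bounded Borel integrands) turns this into $\int_0^1 (f\gamma F_{\gamma,V}^{-1})(s)\,ds$. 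Hence $\mu_\tau$ pushed forward by $F_\tau\tau^{-1}$ is Lebesgue measure on $[0,1]$.

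Next, the same reasoning applies to $\gamma$: the pushforward of $\mu_{\gamma,V}$ under $F_{\gamma,V}\gamma^{-1}$ is Lebesgue measure on $[0,1]$. Equivalently, Lebesgue measure pushed forward by $\gamma F_{\gamma,V}^{-1}$ equals $\mu_{\gamma,V}$. To see this concretely, take $s\in[0,1]$ and the interval $[0,s]$. Its preimage under $F_{\gamma,V}\gamma^{-1}$ is the arc $\gamma([a_\gamma,F_{\gamma,V}^{-1}(s)])$. By definition of $F_{\gamma,V}$, this arc has $\mu_{\gamma,V}$-mass $F_{\gamma,V}(F_{\gamma,V}^{-1}(s))=s$. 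Intervals $[0,s]$ form a $\pi$-system generating the Borel sets, so the two measures agree (Dynkin's $\pi$–$\lambda$ theorem). Composing the two pushforwards gives $z_e\#\mu_\tau = (\gamma F_{\gamma,V}^{-1})\#(F_\tau\tau^{-1})\#\mu_\tau = \mu_{\gamma,V}$. Equivalently, $\int_0^1 (f\gamma F_{\gamma,V}^{-1})(s)\,ds=\int f\,d\mu_{\gamma,V}$.

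The only delicate point is justifying the change of variables for merely absolutely continuous $F_\tau$ and a bounded Borel integrand. I would avoid the general theorem and argue directly on arcs. For the arc $A=\gamma([a_\gamma,c])$, the set $z_e^{-1}(A)$ is $\tau([a_\tau,F_\tau^{-1}(F_{\gamma,V}(c))])$ by monotonicity and bijectivity (Proposition \ref{EP_AC}). Its $\mu_\tau$-mass is $F_\tau(F_\tau^{-1}(F_{\gamma,V}(c)))=F_{\gamma,V}(c)=\mu_{\gamma,V}(A)$. Arcs starting at $\gamma(a_\gamma)$, together with the empty set, form a $\pi$-system generating the Borel $\sigma$-algebra of $\gamma$, so the $\pi$–$\lambda$ theorem gives equality of the measures. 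The identity \eqref{EP_COV} then follows for simple functions and, by bounded convergence, for all bounded Borel $f$. Bijectivity ensures the endpoints and single points cause no issue, since both measures are atomless.
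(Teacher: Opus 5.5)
Your argument is correct, but it takes a different route from the paper's. The paper works directly with integrals. It writes $\int f\,d\mu_{\gamma,V}=\int_{a_\gamma}^{b_\gamma}(f\gamma)(t)(w_{\gamma,V}\gamma)(t)\,dt$ and substitutes $t\mapsto F_{\gamma,V}^{-1}F_\tau(t)$ in one step. It then uses the a.e.\ derivative formula $(F_{\gamma,V}^{-1}F_\tau)'=(w_\tau\tau)/(w_{\gamma,V}z_e\tau)$, so the densities cancel and \eqref{EP_COV} follows for all bounded Borel $f$ at once.

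You instead compare the two measures on the $\pi$-system of arcs through the base point. There both masses come exactly from the distribution functions via $F_\tau(F_\tau^{-1}(F_{\gamma,V}(c)))=F_{\gamma,V}(c)$, and you extend by Dynkin's theorem and simple-function approximation.

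Your route needs no differentiation. It avoids the change of variables for the composite map $F_{\gamma,V}^{-1}F_\tau$, the a.e.\ chain rule and the inverse-derivative formula. In the paper these implicitly rely on $w_{\gamma,V},w_\tau>0$ a.e., which makes the composite absolutely continuous and ensures the exceptional null sets are carried to null sets. Your argument only uses that $F_\tau$ and $F_{\gamma,V}$ are continuous strictly increasing bijections. It is the standard quantile-transform argument and works for any atomless measures of full support.

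The paper's computation is shorter and gives the integral identity without a monotone-class step. It is also phrased through the same derivative formula \eqref{z_e_der} used afterwards, e.g.\ for the lower bound on $|(z_e\tau)'|$ in Proposition \ref{ep_reg}.
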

\begin{proof}
    Let $f$ be a bounded Borel function on $\gamma$. By definition,
    $$ \int f d\mu_{\gamma,V} = \int_{a_\gamma}^{b_\gamma} (f\gamma)(t) (w_{\gamma,V}\gamma)(t) dt, $$
    hence after the change of variables $t\mapsto \gamma^{-1}z_e \tau(t)=F_{\gamma,V}^{-1}F_{\tau}(t)$, we obtain
    $$\int_{a_\tau}^{b_\tau} (fz_e\tau)(t) (w_{\gamma,V}z_e\tau)(t) (F_{\gamma,V}^{-1}F_{\tau})'(t) dt .$$
    Now use the fact that 
    \begin{equation}
        (F_{\gamma,V}^{-1}F_{\tau})'(t) = \frac{F_\tau'(t)}{F_{\gamma,V}'((F_{\gamma,V}^{-1}F_{\tau})(t))} = \frac{(w_{\tau}\tau)(t)}{(w_{\gamma,V}z_e\tau)(t)},\quad \text{a.e. } t\in [a_\tau,b_\tau].
    \end{equation}
    to obtain  
        $$ \int f d\mu_{\gamma,V} = \int_{a_\tau}^{b_\tau} (fz_e\tau)(t) (w_{\tau}\tau)(t) dt = \int (fz_e) d\mu_\tau.$$
    This proves that $\mu_{\gamma,V} = z_e\#\mu_\tau$.
\end{proof}

For $C^{1,\alpha}$ Jordan curves $\gamma$ and $\tau$, and $V=0$, it follows from \cite[Thm. 4.3.4 \& 4.3.14]{Ransford1995} that the conformal map between the outer domain of $\tau$ and the outer domain of $\gamma$ is a map that satisfies \eqref{EP_COV}. As such we can see the equilibrium parameterization as a generalization of the conformal map restricted to the curve in the sense that it still preserves the equilibrium measure in the presence of an external field. For a Jordan arc $\gamma$, $V=0$ and $\tau=[-1,1]$, the definition in \eqref{eq_par_def} essentially reduces to the definition of the equilibrium parametrization in \cite[Eq. (1.1)]{CourteautJohanssonViklund2026}.
\medbreak

By picking an appropriate reference contour, initial regularity of the contour and the potential can be translated into regularity of the equilibrium parametrization. For example, we can show the following result in the setting of a Jordan curve.

\begin{prop} \label{ep_reg}
    Suppose that Assumption \ref{ass2} holds. If $\gamma$ and $\tau$ are $C^{m,\alpha}$ Jordan curves and $V\in C^{m,\alpha}(\gamma)$ with $m\geq 1$, then $z_e\in C^{m,\alpha}(\tau)$. Moreover, there exists $c>0$ such that $|(z_e\tau)'(t)|\, \geq c$ for a.e. $t\in[a_\tau,b_\tau]$.
\end{prop}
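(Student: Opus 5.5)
The plan is to read off the regularity of $z_e$ from formula \eqref{z_e_der}, once we know that the densities $w_{\gamma,V}$ and $w_\tau$ are $C^{m-1,\alpha}$. Since $z_e\tau=\gamma\circ F_{\gamma,V}^{-1}\circ F_\tau$, it suffices to show three things: that $F_\tau$ and $F_{\gamma,V}$ are $C^{m,\alpha}$ on their parameter intervals, that $F_{\gamma,V}^{-1}$ is $C^{m,\alpha}$, and that compositions of $C^{m,\alpha}$ maps stay $C^{m,\alpha}$. The arc-length parametrizations $\gamma,\tau$ are themselves $C^{m,\alpha}$, since the curves are $C^{m,\alpha}$ Jordan curves. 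Because the curves are closed, all of this should be read periodically, so that $z_e$ is $C^{m,\alpha}$ across the junction point $\tau(a_\tau)=\tau(b_\tau)$.

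\textbf{Key step: regularity of the equilibrium densities.} The main obstacle is to show $w_{\gamma,V}\in C^{m-1,\alpha}(\gamma)$, and likewise $w_\tau\in C^{m-1,\alpha}(\tau)$. Since $\tau$ carries no potential, the latter is the special case $V=0$ of the same argument. I would use the Euler--Lagrange characterization. Because $\supp{\mu_{\gamma,V}}=\gamma$ (Assumption \ref{ass2}), it gives
\[
2\int_\gamma \log\frac{1}{|z-u|}\,w_{\gamma,V}(u)|du| + V(z) = \text{const},\qquad z\in\gamma .
\]
Differentiating tangentially along $\gamma$ turns this into a singular integral equation for $w_{\gamma,V}$, which is a Cauchy/Hilbert-type transform on $\gamma$ plus a smoothing remainder determined by the geometry of $\gamma$. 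Equivalently, the logarithmic potential $U(z)=\int\log\frac1{|z-u|}d\mu_{\gamma,V}(u)$ equals $\text{const}-V/2$ on $\gamma$. So $U$ solves the Dirichlet problem in both $D$ and $D^*$, where in the exterior domain $D^*$ we include the $\log(1/|z|)$ term at infinity. Since $\gamma\in C^{m,\alpha}$ and $V\in C^{m,\alpha}(\gamma)$, Kellogg--Warschawski type boundary regularity (for instance via the conformal maps onto $\D$ and $\D^*$, which are $C^{m,\alpha}$ up to the boundary) gives $U|_{\overline{D}}, U|_{\overline{D^*}}\in C^{m,\alpha}$ up to $\gamma$. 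The jump formula for the normal derivatives of a single-layer potential then yields
\[
w_{\gamma,V}=\frac{1}{2\pi}\Big(\partial_{n_+}U+\partial_{n_-}U\Big)\in C^{m-1,\alpha}(\gamma).
\]
Some care is needed to justify this jump relation, because a priori we only know that $w_{\gamma,V}$ is continuous and positive. I expect this to be handled by first showing that the density is Hölder, using $V\in C^{1,\alpha}$ and the boundedness of the Cauchy integral on $C^{0,\alpha}$ for $C^{1,\alpha}$ curves, and then bootstrapping.

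\textbf{Assembling the regularity of $z_e$.} Given $w_{\gamma,V}\gamma\in C^{m-1,\alpha}$, the primitive $F_{\gamma,V}$ is $C^{m,\alpha}$, and the same holds for $F_\tau$. Continuity and compactness give $\min w_{\gamma,V}>0$, so $F_{\gamma,V}'\ge c_0>0$. The inverse function theorem, with its standard Hölder version, then shows that $F_{\gamma,V}^{-1}$ is $C^{m,\alpha}$ on $[0,1]$. By the chain rule and the closure of $C^{m,\alpha}$ under composition when $m\ge1$, the composition $z_e\tau$ is $C^{m,\alpha}$. Hence $z_e\in C^{m,\alpha}(\tau)$.

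\textbf{Lower bound on the derivative.} By \eqref{z_e_der},
\[
|(z_e\tau)'(t)|=\frac{(w_\tau\tau)(t)}{(w_{\gamma,V}z_e\tau)(t)}\ge\frac{\min_\tau w_\tau}{\max_\gamma w_{\gamma,V}}=:c>0 .
\]
This uses that both densities are continuous and strictly positive on compact sets.
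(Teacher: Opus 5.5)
Your proposal is correct and follows essentially the same route as the paper. You write $U_{\mu_{\gamma,V}}$ on each side of $\gamma$ as a constant minus $\tfrac12$ times the bounded harmonic extension of $V$ (minus $\log|\psi|$ outside), all $C^{m,\alpha}$ up to $\gamma$ by Kellogg, recover $w_{\gamma,V}\in C^{m-1,\alpha}$ from the one-sided normal derivatives, then integrate, invert and compose, reading the lower bound off \eqref{z_e_der}. The bootstrap you anticipate is unnecessary, since the jump formula (Saff--Totik, Thm.~II.1.5) only needs $U_{\mu_{\gamma,V}}$ Lipschitz near $\gamma$, which your two-sided $C^{m,\alpha}$ representation already gives for $m\ge 1$; also, with $\mathfrak{n}_\pm$ pointing away from $\gamma$ as in the paper, the formula reads $w_{\gamma,V}=-\frac{1}{2\pi}(\partial_{\mathfrak{n}_+}U+\partial_{\mathfrak{n}_-}U)$.
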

\begin{proof}
    Let $V_{\text{ext}}$ and $V_{\text{int}}$ be the bounded harmonic extension of $V:\gamma\to\R$ to $\text{ext}(\C\setminus\gamma)$ and $\text{int}(\C\setminus\gamma)$ respectively. It follows from the assumptions on $\gamma$ and $V$ and Kellogg's theorem that $V_{\text{ext}}$ and $V_{\text{int}}$ are in $C^{m,\alpha}$. From \cite[Thm. 4.7]{SaffTotik1997}, we know that
    \begin{align*}
        U_{\mu_{\gamma,V}}(z) &= I_{\mu_{\gamma,V}}[\mu_{\gamma,V}] - \frac{1}{2} \int V d\mu_{\gamma,V}  - \frac{1}{2} V_{\text{ext}}(z) - g_{\text{ext}(\C\setminus\gamma)}(z,\infty),\quad z\in \text{ext}(\C\setminus\gamma). \\
        U_{\mu_{\gamma,V}}(z) &= I_{\mu_{\gamma,V}}[\mu_{\gamma,V}] - \frac{1}{2} \int V d\mu_{\gamma,V} - \frac{1}{2} V_{\text{int}}(z),\quad z\in \text{int}(\C\setminus\gamma),
    \end{align*}
    where $ g_{\text{ext}(\C\setminus\gamma)}(z,\infty) = \log |\psi(z)| $ and $\psi:\text{ext}(\C\setminus\gamma)\to\text{ext}(\C\setminus\mathbb{T})$ is the conformal map with $\psi(\infty)=\infty$ and $\lim_{z\to\infty} \psi(z)/z>0$. The assumptions on $\gamma$ and Kellogg's theorem again imply that $\psi$ is in $C^{m,\alpha}$. Denote
        $$ U_{\mu_{\gamma,V}}(z) = \int \log \frac{1}{|z-v|} d\mu_{\gamma,V}(v) ,\quad z\in\C\setminus\gamma, $$
    and let $\mathfrak{n}_{+}$ and $\mathfrak{n}_{-}$ be the normal to $\gamma$ pointing into the exterior and interior so $\mathfrak{n}_{\pm} = \mp i \mathfrak{t}$ in terms of the unit tangent $\mathfrak{t}$ to $\gamma$. It follows from \cite[Thm. 1.5]{SaffTotik1997} that whenever $U_{\mu_{\gamma,V}}$ is Lipschitz in a neighborhood of $\supp{\mu_{\gamma,V}}=\gamma$, 
    $$ d\mu_{\gamma,V}(z) = - \frac{1}{2\pi} \left( (\partial_{\mathfrak{n}_+}U_{\mu_{\gamma,V}})(z) + (\partial_{\mathfrak{n}_-}U_{\mu_{\gamma,V}})(z) \right) |dz|,\quad z\in\gamma. $$    
    Hence, on $\gamma$,
    \begin{align*}
        -2\pi w_{\gamma,V} = \partial_{\mathfrak{n}_+}U_{\mu_{\gamma,V}} + \partial_{\mathfrak{n}_-}U_{\mu_{\gamma,V}}  = -\partial_{\mathfrak{n}_+}\log |\psi| - \frac{1}{2}( \partial_{\mathfrak{n}_+}V_{\text{ext}} + \partial_{\mathfrak{n}_-}V_{\text{int}}).
    \end{align*}
    As a consequence, $w_{\gamma,V}\in C^{m-1,\alpha}(\gamma)$ and thus $F_{\gamma,V} \in C^{m,\alpha}([a_\gamma,b_\gamma])$. Similarly, we can show that $F_{\tau}\in C^{m,\alpha}([a_\tau,b_\tau])$ (replace $\gamma\mapsto\tau$ and $V\mapsto 0$). Since $F_{\gamma,V}'(t) = (w_{\gamma,V}\gamma)(t)>0$ for all $t\in [a_\gamma,b_\gamma]$, we also have $F_{\gamma,V}^{-1} \in C^{m,\alpha}([0,1])$. Hence, by definition \eqref{eq_par_def}, also $z_e\in C^{m,\alpha}(\tau)$. Furthermore, it follows from \eqref{z_e_der} and continuity of $F_{\tau}'$ and $F_{\gamma,V}'$ that
    $$|(z_e\tau)'(t)|\, \geq \frac{\min_{\tau}w_\tau}{\max_{\gamma}w_{\gamma,V}} > 0,$$
    which proves the final assertion.
\end{proof}

\subsection{Grunsky operator}

The standing assumption in this subsection will be the following. The stated $L^2$-space, and all other $L^2$-spaces in this manuscript, only contains real-valued functions.

\begin{assum} \label{ass1_L2}
Assumption \ref{ass2} holds and the $\log$-kernel
    $$\mathcal{L}_0:\tau\times\tau \to\R\cup\{\infty\}:(u,v)\mapsto \log\frac{1}{|u-v|},  $$
and the Grunsky kernel
$$\mathcal{G}:\tau\times\tau \to\R\cup\{\infty\}:(u,v)\mapsto \log\left|\frac{u-v}{z_e(u)-z_e(v)}\right| + \frac{1}{2} ((Vz_e)(u)+(Vz_e)(v)),$$
are both in $L^2(\tau\times\tau,\mu_\tau\otimes\mu_\tau)$.
\end{assum}

Assumption \ref{ass1_L2} will allow us to expand $\mathcal{L}_0$ and $\mathcal{G}$ in an appropriate $L^2$-basis, which in its turn will allow us to develop some general theory for the induced operators on $l^2$. As indicated by, e.g., Proposition \ref{ep_reg}, Assumption \ref{ass1_L2} reflects some initial regularity assumed on the objects $\gamma$ and $\tau$.
\medbreak

It will be convenient to introduce the closed subspace 
$$L_0^2(\tau,\mu_\tau) = \{f\in L^2(\tau,\mu_\tau) : \int f d\mu_\tau = 0\},$$ 
of the Hilbert space $L^2(\tau,\mu_\tau)$. Since $\mathcal{L}_0\in L^2(\tau\times\tau,\mu_\tau\otimes\mu_\tau)$ is symmetric and real-valued, the single-layer potential operator
    $$ (U_{\mu_\tau}f)(v) = \int f(u) \log\frac{1}{|u-v|} d\mu_\tau(u),  $$
is a Hilbert-Schmidt integral operator on $L^2(\tau,\mu_\tau)$. Its restriction to $L_0^2(\tau,\mu_\tau)$ is therefore Hilbert-Schmidt as well. By the Hilbert-Schmidt theorem, there then exists an orthonormal basis $\phi = (\phi_n)_{n\geq 1}$ of $L_0^2(\tau,\mu_\tau)$, with $U_{\mu_\tau}\phi_n=\lambda_n \phi_n$ and all $\lambda_n\in\R$, such that for all $f\in L_0^2(\tau,\mu_\tau)$,
\begin{equation} \label{Uf_exp}
     U_{\mu_\tau}f = \sum_{n\geq 1} \lambda_n \left(\int f \phi_n d\mu_{\tau} \right) \phi_n.
\end{equation}
In particular,
    $$ \int (U_{\mu_\tau}\phi_k)(v) \phi_l(v) d\mu_{\tau}(v) = \lambda_k \delta_{k,l},\quad k,l\geq 1. $$
We may extend $\phi$ to an orthonormal basis $(\phi_n)_{n\geq 0}$ of $L^2(\tau,\mu_\tau)$ by setting $\phi_0=1$. In that case, $\mathcal{L}_0\in L^2(\tau\times\tau,\mu_\tau\otimes\mu_\tau)$ admits an expansion of the form
    $$ \mathcal{L}_0(u,v) = \Lambda_{0,0} + \Lambda_1 \phi(u)^t + \Lambda_2 \phi(v)^t + \phi(u)\Lambda \phi(v)^t,\quad \text{a.e. } u,v\in\tau, $$
where due to \eqref{Uf_exp}, we have $\Lambda=[\lambda_k\delta_{k,l}]_{k,l\geq 1}$. It follows from Parseval's theorem that
    $$ \sum_{k\geq 1} \lambda_k^2 < \infty, $$
so that, in particular, $\lim_{k\to\infty} |\lambda_k| = 0$.
\medbreak

We will now show that $\Lambda_1=\Lambda_2=0$ in the expansion.

\begin{prop} \label{L_0_exp}
    If $\mathcal{L}_0\in L^2(\tau\times\tau,\mu_\tau\otimes\mu_\tau)$, then
    $$ \mathcal{L}_0(u,v) = I[\mu_\tau] + \phi(u) \Lambda \phi(v)^t,\quad \text{a.e. } u,v\in\tau. $$
\end{prop}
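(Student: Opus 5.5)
Since $(\phi_n)_{n\geq 0}$ is an orthonormal basis of $L^2(\tau,\mu_\tau)$, the products $\phi_k(u)\phi_l(v)$, $k,l\geq 0$, form an orthonormal basis of $L^2(\tau\times\tau,\mu_\tau\otimes\mu_\tau)$. The coefficients in the expansion of $\mathcal{L}_0$ are therefore obtained by integrating $\mathcal{L}_0$ against these products. The plan is to compute each family of coefficients and show that $\Lambda_{0,0}=I[\mu_\tau]$, $\Lambda_1=\Lambda_2=0$, and that the $(k,l)$ block with $k,l\geq1$ is $\lambda_k\delta_{k,l}$. The last statement was already observed before the proposition via \eqref{Uf_exp}.

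For the constant term, $\Lambda_{0,0}=\iint \log\frac{1}{|u-v|}\,d\mu_\tau(u)d\mu_\tau(v)=I[\mu_\tau]$ by definition. For $\Lambda_1$ (and by symmetry $\Lambda_2$), the $k$-th entry is
$$\iint \log\frac{1}{|u-v|}\phi_k(u)\,d\mu_\tau(u)d\mu_\tau(v)=\int (U_{\mu_\tau}1)(u)\,\phi_k(u)\,d\mu_\tau(u),$$
using Fubini, which is justified since $\mathcal{L}_0\in L^2\subset L^1$ for the probability measure $\mu_\tau\otimes\mu_\tau$. The key input is the Frostman-type property of the (unweighted) equilibrium measure $\mu_\tau$: its potential $U_{\mu_\tau}1(u)=\int\log\frac{1}{|u-v|}d\mu_\tau(v)$ equals $I[\mu_\tau]$ quasi-everywhere on $\operatorname{supp}\mu_\tau=\tau$ (Frostman's theorem, e.g.\ \cite{SaffTotik1997}). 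Since $\mu_\tau$ has finite logarithmic energy, it does not charge polar sets, so the identity holds $\mu_\tau$-a.e. Hence the integral is $I[\mu_\tau]\int\phi_k\,d\mu_\tau=0$ for $k\geq1$, because $\phi_k\in L^2_0(\tau,\mu_\tau)$.

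Finally, for $k,l\geq1$, Fubini gives $\iint\mathcal{L}_0\,\phi_k(u)\phi_l(v)=\int (U_{\mu_\tau}\phi_k)\phi_l\,d\mu_\tau=\lambda_k\delta_{k,l}$. Assembling the coefficients yields the claimed $L^2$ expansion, and therefore a.e.\ equality.

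The main point requiring care is the Frostman step: we need that $U_{\mu_\tau}1$ is constant equal to $I[\mu_\tau]$ $\mu_\tau$-a.e. rather than merely quasi-everywhere, and that $\mu_\tau$ is the genuine unweighted equilibrium measure of $\tau$. Both follow from the standard potential theory facts, since $\mu_\tau$ has finite energy and so charges no polar set. The remaining step, including the case distinction $k=0$ versus $k\geq1$, is straightforward bookkeeping.
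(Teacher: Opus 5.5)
Your proof is correct and follows the paper's argument. The paper also reads off the linear coefficients from $\int \log\frac{1}{|u-v|}\,d\mu_\tau(u)=\Lambda_{0,0}+\Lambda_2\phi(v)^t$, kills $\Lambda_2$ (and by symmetry $\Lambda_1$) using constancy of the equilibrium potential on $\operatorname{supp}(\mu_\tau)=\tau$, and integrates once more to get $\Lambda_{0,0}=I[\mu_\tau]$; your remark that Frostman gives equality only quasi-everywhere, upgraded to $\mu_\tau$-a.e.\ because $\mu_\tau$ has finite energy, slightly tightens the paper's phrasing ``for $v\in\operatorname{supp}(\mu_\tau)$''.
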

\begin{proof}
    Since $\phi$ is an orthonormal basis, we have 
        $$ \int \log\frac{1}{|u-v|} d\mu_\tau(u) =   \Lambda_{0,0} + \Lambda_2 \phi(v)^t,\quad v\in\text{supp}(\mu_\tau).$$
    On the other hand, we know from potential theory that it needs to be constant along $v\in\text{supp}(\mu_\tau)$. Since $\text{supp}(\mu_\tau)=\tau$, we can therefore conclude that $\Lambda_2=0$. By symmetry, $\Lambda_1=0$ as well. Integrating along $d\mu_\tau(v)$ in the above then yields 
        $$ \Lambda_{0,0} = \iint \log\frac{1}{|u-v|} d\mu_\tau(u)d\mu_\tau(v), $$
    which is the desired expression for $\Lambda_{0,0}$.
\end{proof}

We may interpret $\Lambda$ as a bounded operator $\ell^2(\Z_{\geq 1})\to\ell^2(\Z_{\geq 1})$ by acting from the right on $\vec{f}\in \ell^2(\Z_{\geq 1})$ through multiplication. The following result then shows that it is positive.

\begin{prop} \label{log_SPD}
If $\mathcal{L}_0\in L^2(\tau\times\tau,\mu_\tau\otimes\mu_\tau)$, then $\vec{f}\Lambda\vec{f}^t>0$ for all $\vec{f}\in \ell^2(\Z_{\geq 1})$. In particular, all $\lambda_n>0$.
\end{prop}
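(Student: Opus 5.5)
The plan is to reduce the statement to the classical fact that the logarithmic energy is strictly positive on signed measures of total mass zero and finite energy. Given $\vec{f}=(f_k)_{k\geq 1}\in\ell^2(\Z_{\geq 1})$, nonzero, we set $f=\sum_{k\geq 1} f_k\phi_k$. Since $\phi$ is an orthonormal basis of $L_0^2(\tau,\mu_\tau)$, this series converges in $L^2(\tau,\mu_\tau)$ to a nonzero real function $f\in L^2_0(\tau,\mu_\tau)$.

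By Proposition \ref{L_0_exp} together with \eqref{Uf_exp}, we have
$$\iint f(u)f(v)\log\frac{1}{|u-v|}\,d\mu_\tau(u)d\mu_\tau(v)=\int (U_{\mu_\tau}f) f\, d\mu_\tau=\sum_{k\geq1}\lambda_k f_k^2=\vec{f}\Lambda\vec{f}^t.$$
The constant $I[\mu_\tau]$ drops out because $\int f\,d\mu_\tau=0$. The double integral converges absolutely, since $\mathcal{L}_0\in L^2(\mu_\tau\otimes\mu_\tau)$ and $f\otimes f\in L^2(\mu_\tau\otimes\mu_\tau)$. Consequently the signed measure $\nu=f\,d\mu_\tau$ is a finite signed Borel measure with compact support, total mass $\nu(\tau)=0$, and logarithmic energy
$$I[\nu]=\iint\log\frac{1}{|u-v|}\,d\nu(u)d\nu(v)=\vec{f}\Lambda\vec{f}^t,$$
with $|\nu|\otimes|\nu|$ integrating $|\log|u-v||$.

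It then remains to show $I[\nu]>0$ for $\nu\neq0$. I would invoke the standard result (e.g. \cite[Lemma I.1.8]{SaffTotik1997}) that a compactly supported signed measure of total mass zero with finite logarithmic energy satisfies $I[\nu]\geq0$, with equality only if $\nu=0$. If one prefers a self-contained argument, use the Fourier representation: for $\nu(\C)=0$,
$$I[\nu]=\frac{1}{2\pi}\int_{\R^2}\frac{|\hat\nu(\xi)|^2}{|\xi|^2}\,d\xi,$$
or equivalently the identity
$$\log\frac{1}{|z|}=\int_0^\infty\frac{e^{-t}-e^{-|z|^2 t}}{2t}\,dt$$
combined with positive-definiteness of Gaussian kernels. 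Either way, the zero-mass condition kills the divergence at $\xi=0$, the integrand is nonnegative, and vanishing forces $\hat\nu\equiv0$, hence $\nu=0$. Since $f\neq0$ in $L^2(\mu_\tau)$, we have $\nu\neq0$, and so $\vec{f}\Lambda\vec{f}^t>0$. Taking $\vec{f}=e_n$ then gives $\lambda_n>0$.

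The main obstacle I expect is justifying the energy identity and the Fubini/limit interchanges for a signed measure with merely $L^2$ density, rather than bounded density. If I run the Fourier or Gaussian argument directly, I would first establish it for truncations $f_N=\sum_{k\leq N}f_k\phi_k$, or for bounded approximants, and pass to the limit. The left side is continuous in $f$ in $L^2$ because $U_{\mu_\tau}$ is Hilbert–Schmidt. On the right side, Fatou or monotone convergence handles the nonnegative integrand. This yields $I[\nu]\geq0$ with the integral representation, and strictness then follows as above.
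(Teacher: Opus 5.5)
Your proof is correct and is essentially the paper's argument: both identify $\vec{f}\Lambda\vec{f}^t$ with the logarithmic energy of the zero-mass measure $f\,d\mu_\tau$ and conclude from the positivity and non-degeneracy of that energy, the only difference being that the paper obtains $\geq 0$ from variations of the energy around the minimizer $\mu_\tau$, whereas you take both nonnegativity and strictness from the general lemma (and you justify the Fubini step more carefully than the paper does). One small slip in your optional self-contained route: the Frullani identity should read $\log\frac{1}{|z|}=\int_0^\infty\frac{e^{-|z|^2t}-e^{-t}}{2t}\,dt$, since as you wrote it the sign is reversed and would give $I[\nu]\leq 0$ instead of $I[\nu]\geq 0$.
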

\begin{proof}
    By variations of the energy functional $\mu\mapsto I_{\tau,0}[\mu]$, we know that
        $$ \iint f(u) f(v) \log \frac{1}{|u-v|} d\mu_\tau(u) d\mu_\tau(v) \geq 0, $$
    for every $f\in L_0^2(\tau,\mu_\tau)$. In the basis $\phi$, this becomes
        $$ \iint \vec{f} \phi(u)^t (\Lambda_{0,0} + \phi(u) \Lambda \phi(v)^t) \phi(v) \vec{f}^t d\mu_\tau(u) d\mu_\tau(v) \geq 0. $$
    The orthonormality conditions $\int \phi(u)^t \phi(u) d\mu_\tau(u) = I $ then imply that $ \vec{f} \Lambda \vec{f}^t \geq 0 $. Suppose now that $\vec{f} \Lambda \vec{f}^t = 0$, i.e.
        $$ \iint f(u) f(v) \log \frac{1}{|u-v|} d\mu_\tau(u) d\mu_\tau(v) = 0. $$
    It then follows from non-degeneracy of the energy functional that we must have $fd\mu_\tau=0$ and thus $f=0$ on $\text{supp}(\mu_\tau)$. Since $\text{supp}(\mu_\tau)=\tau$, we can then conclude that $\vec{f}=0$.
\end{proof}

By the previous result, we can also expand $\mathcal{L}_0$ in the rescaled basis $\psi = (\sqrt{\lambda_n} \phi_n)_{n\geq 1}$ as
    $$ \mathcal{L}_0(u,v) = L_{0,0} + \psi(u) \psi(v)^t,\quad \text{a.e. } u,v\in\tau. $$
Such an expansion will play a important role later.
\medbreak

Through the equilibrium parametrization, we can study the $\log$-kernel on $\gamma$ in the external field $V$ by parameterizing it on $\tau$. We define 
    $$\mathcal{L}_1:\tau\times\tau \to\R\cup\{\infty\}:(u,v)\mapsto \log\frac{1}{|z_e(u)-z_e(v)|} + \frac{1}{2} ((Vz_e)(u)+(Vz_e)(v)).$$
Since $\mathcal{L}_1=\mathcal{L}_0+\mathcal{G}$, clearly $\mathcal{L}_1\in L^2(\tau\times\tau,\mu_\tau\otimes\mu_\tau)$ by Assumption \ref{ass1_L2}. As a consequence, we can expand $\mathcal{L}_1$ in the same basis as $\mathcal{L}_0$. Say we have the expansion
    $$ \mathcal{L}_1(u,v) = L_{0,0} + L_1 \phi(u)^t + L_2 \phi(v)^t + \phi(u) L \phi(v)^t,\quad \text{a.e. } u,v\in\tau. $$
In the following lemma, we will show that $L_1=L_2=0$. 

\begin{prop} \label{L_1_exp}
    If $\mathcal{L}_1\in L^2(\tau\times\tau,\mu_\tau\otimes\mu_\tau)$, then
    $$ \mathcal{L}_1(u,v) = I_{\gamma,V}[\mu_{\gamma,V}] + \phi(u) L \phi(v)^t,\quad \text{a.e. } u,v\in\tau. $$
\end{prop}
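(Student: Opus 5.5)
I will mirror the proof of Proposition \ref{L_0_exp}, using the transport property $\mu_{\gamma,V}=z_e\#\mu_\tau$ together with the Frostman-type characterization of the weighted equilibrium measure. Since $(\phi_n)_{n\geq 0}$ is an orthonormal basis of $L^2(\tau,\mu_\tau)$ with $\phi_0=1$, integrating the expansion of $\mathcal{L}_1$ against $d\mu_\tau(u)$ kills every term containing a factor $\phi_k(u)$, $k\geq 1$. By Fubini, which applies because $\mathcal{L}_1\in L^2\subset L^1$ of the probability measure $\mu_\tau\otimes\mu_\tau$, this gives, for a.e. $v\in\tau$,
\[
\int \mathcal{L}_1(u,v)\,d\mu_\tau(u) = L_{0,0} + L_2\phi(v)^t .
\]

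Next I compute the left-hand side directly. Applying \eqref{EP_COV} with the function $z\mapsto \log\frac{1}{|z-z_e(v)|}+\frac12 V(z)$ gives
\[
\int \mathcal{L}_1(u,v)\,d\mu_\tau(u) = U_{\mu_{\gamma,V}}(z_e(v)) + \frac12 (Vz_e)(v) + \frac12\int V\,d\mu_{\gamma,V}.
\]
This function is not bounded, but it is integrable, so I will justify the application by monotone truncation of the logarithm. By the Euler--Lagrange (Frostman) conditions for the weighted energy $I_{\gamma,V}$, see \cite[Thm. 1.3]{SaffTotik1997}, the quantity $U_{\mu_{\gamma,V}}+\frac12 V$ equals a constant $F$ quasi-everywhere on $\supp{\mu_{\gamma,V}}$. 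Because $\mu_{\gamma,V}$ has finite energy, it charges no polar set, so the equality holds $\mu_{\gamma,V}$-a.e. on $\gamma$. Pulling back by $z_e$ via the transport property, it then holds for $\mu_\tau$-a.e. $v\in\tau$. Hence the left-hand side is a.e. constant in $v$.

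Since $\supp{\mu_\tau}=\tau$ and $\{\phi_k\}_{k\geq1}$ is orthonormal and orthogonal to constants, the a.e. identity $L_2\phi(v)^t=\text{const}-L_{0,0}$ forces $L_2=0$. The kernel $\mathcal{L}_1$ is symmetric, so $L_1=0$ as well. Integrating once more against $d\mu_\tau(v)$ and using \eqref{EP_COV} for the pair $(u,v)$, that is, the product pushforward $\mu_{\gamma,V}\otimes\mu_{\gamma,V}$, gives
\[
L_{0,0} = \iint \log\frac{1}{|z-w|}\,d\mu_{\gamma,V}(z)\,d\mu_{\gamma,V}(w) + \int V\,d\mu_{\gamma,V} = I_{\gamma,V}[\mu_{\gamma,V}],
\]
which is the claim.

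The main obstacle is the careful use of the Frostman characterization. It holds only quasi-everywhere, and the normalization $\frac12 V$ in $\mathcal{L}_1$ must match the paper's energy $I_{\gamma,V}$, in which $V$ enters with weight $1$; this is why the equilibrium condition reads $U+\frac12 V=\text{const}$. In the present one-cut setting, with $\supp{\mu_{\gamma,V}}=\gamma$, the equality holds on all of the support up to a polar set, and this is exactly what is needed.
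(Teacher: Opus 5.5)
Your proof is correct and follows the same route as the paper. You integrate the expansion in $u$, use $\mu_{\gamma,V}=z_e\#\mu_\tau$ together with the Frostman identity $U_{\mu_{\gamma,V}}+\frac12 V=\text{const}$ on $\gamma$ to force $L_2=0$, get $L_1=0$ by symmetry, and read off $L_{0,0}=I_{\gamma,V}[\mu_{\gamma,V}]$ from the double integral.

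Your handling of two technical points is more careful than the paper's. For the quasi-everywhere nature of the Frostman identity, you note that finite-energy measures charge no polar set and then pull back by $z_e$. For the unbounded logarithm in the transport identity, you use truncation. The paper instead simply asserts the identity for all $v\in\tau$.
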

\begin{proof}
    Since $\phi$ is an orthonormal basis, we have
    \begin{align*}
        L_2 = \iint &\left(\log\frac{1}{|z_e(u)-z_e(v)|} + \frac{1}{2} ((Vz_e)(u)+(Vz_e)(v)) \right) \phi(v) d\mu_\tau(u)d\mu_\tau(v).
    \end{align*}
    On the other hand, we know from potential theory that
    \begin{align*}
        \int &\left(\log\frac{1}{|z_e(u)-z_e(v)|} +  \frac{1}{2} (Vz_e)(v) \right) d\mu_\tau(u) \\ 
        &= \int \left(\log\frac{1}{|u-z_e(v)|} +  \frac{1}{2} V(v) \right) d\mu_{\gamma,V}(u) \\ &= I_V[\mu_{\gamma,V}] - \frac{1}{2} \int V d\mu_{\gamma,V},
    \end{align*}
    for all $v\in\tau$ with $z_e(v)\in\text{supp}(\mu_{\gamma,V})$. Since $\text{supp}(\mu_{\gamma,V})=\gamma$, the latter is certainly satisfied for all $v\in\tau$. We can therefore conclude that
        $$L_2 = \int \left(I_{\gamma,V}[\mu_{\gamma,V}] - \frac{1}{2} \int V d\mu_{\gamma,V} + \frac{1}{2} \int (Vz_e) d\mu_\tau\right) \phi(v) d\mu_\tau(v) = 0, $$
    By symmetry, $L_1=0$ as well.  Finally,
        $$ L_{0,0} = \iint \mathcal{L}_1(u,v) d\mu_\tau(u)d\mu_\tau(v), $$
    and thus $L_{0,0} = I_V[\mu_{\gamma,V}]$.
\end{proof}

The Grunsky kernel $\mathcal{G}$ arises by comparing the $\log$-kernel on $\tau$ with the $\log$-kernel on $\gamma$ parametrized by the equilibrium parametrization. More precisely, we define
    $$ \mathcal{G}(u,v) = \log\left|\frac{u-v}{z_e(u)-z_e(v)}\right| + \frac{1}{2} ((Vz_e)(u)+(Vz_e)(v)),\quad \text{a.e. } u,v\in\tau. $$
We thus have,
    $$\mathcal{G}(u,v) = \mathcal{L}_1(u,v) - \mathcal{L}_0(u,v),\quad \text{a.e. } u,v\in\tau, $$
and
    $$\mathcal{G}(u,v) =  G_{0,0} + \phi(u) G \phi(v)^t,\quad \text{a.e. } u,v\in\tau,$$ 
with 
    $$ G_{0,0} = I_V(\mu_{\gamma,V})-I(\mu_\tau),\quad G=L-\Lambda. $$

In the rescaled basis $\psi = \Lambda^{\frac{1}{2}} \phi^t = (\sqrt{\lambda_n}\phi_n)_{n\geq 1}$, the kernel $\mathcal{G}(u,v)$ can be expressed as 
\begin{equation} \label{Guvexp}
    \mathcal{G}(u,v) =  G_{0,0} + \psi(u) K^{\gamma,V}_{\tau} \psi(v)^t,\quad \text{a.e. } u,v\in\tau,
\end{equation}
in terms of 
$$K^{\gamma,V}_{\tau} = \Lambda^{-\frac{1}{2}} G \Lambda^{-\frac{1}{2}} = \big[G_{k,l}/\sqrt{\lambda_k\lambda_l}\big]_{k,l\geq 1}.$$
The representation \eqref{Guvexp} should be thought of as the weighted Grunsky expansion and the coefficients $(K^{\gamma,V}_{\tau})_{k,l}$ as the weighted Grunsky coefficients.

\begin{rem}
    For a Jordan curve $\gamma$, $V=0$ and $\tau=\mathbb{T}$, this is exactly the real version of the classical Grunsky expansion described in \eqref{ReGr}. We will carefully work this out in the next section. For a Jordan arc $\gamma$, $V=0$ and $\tau=[-1,1]$, this is the arc-Grunsky expansion in \cite[Lem. 1.4]{CourteautJohanssonViklund2026}.
\end{rem}

Both $\Lambda$ and $G$ can be seen as operators $\ell^2(\Z_{\geq 1})\to\ell^2(\Z_{\geq 1})$ by acting from the right on sequences $\vec{f}\in\ell^2(\Z_{\geq 1})$ through multiplication. In fact, due to Assumption \ref{ass1_L2}, they are both self-adjoint Hilbert-Schmidt operators. The same does not need to hold for $K^{\gamma,V}_{\tau}$, which can generally only be interpreted as an operator $\ell^2(\Z_{\geq 1})\Lambda^{\frac{1}{2}}\to\ell^2(\Z_{\geq 1})\Lambda^{-\frac{1}{2}}$. Here $\ell^2(\Z_{\geq 1})\Lambda^{\frac{1}{2}}\subset\ell^2(\Z_{\geq 1})$ as $\lim_{k\to\infty} \lambda_k=0$. In order to extend it to the whole of $\ell^2(\Z_{\geq 1})$, we need to add some assumptions on the decay of the Grunsky coefficients. In the next section we will show that this can certainly be done if $\gamma$ is a sufficiently regular Jordan curve, $V$ is a sufficiently regular potential and $\tau=\mathbb{T}$ via Proposition \ref{ep_reg}. For now, we obtain the following generalized Grunsky inequality on the smaller subspace $\ell^2(\Z_{\geq 1})\Lambda^{\frac{1}{2}}$.

\begin{prop}[Grunsky inequality] \label{GI}
    Suppose that Assumption \ref{ass1_L2} holds. Then, for all non-zero $\vec{f}\in \ell^2(\Z_{\geq 1})$, we have $\vec{f} G\vec{f}^t>-\vec{f} \Lambda\vec{f}^t$. In particular, $K^{\gamma,V}_{\tau}>-I$ on $\ell^2(\Z_{\geq 1})\Lambda^{\frac{1}{2}}$.
\end{prop}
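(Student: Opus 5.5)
Since $G=L-\Lambda$, the inequality $\vec f G\vec f^t>-\vec f\Lambda\vec f^t$ is equivalent to $\vec f L\vec f^t>0$ for every non-zero $\vec f\in\ell^2(\Z_{\geq 1})$. So the plan is to repeat the argument of Proposition \ref{log_SPD}, with the kernel $\mathcal{L}_0$ on $\tau$ replaced by $\mathcal{L}_1$. The equilibrium parametrization lets us transport the resulting quadratic form to the weighted energy on $\gamma$.

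Fix a non-zero $\vec f\in\ell^2(\Z_{\geq 1})$ and set $f=\vec f\phi^t=\sum_{n\ge1}f_n\phi_n\in L_0^2(\tau,\mu_\tau)$, which is non-zero. By Proposition \ref{L_1_exp} and orthonormality of $(\phi_n)_{n\geq 0}$,
\[
\iint f(u)f(v)\mathcal{L}_1(u,v)\,d\mu_\tau(u)d\mu_\tau(v)=\vec f L\vec f^t,
\]
because the constant term integrates to zero against $f\otimes f$. The $V$-terms in $\mathcal{L}_1$ are of the form $h(u)+h(v)$, so they also integrate to zero since $\int f\,d\mu_\tau=0$. It follows that $\vec f L\vec f^t=\iint f(u)f(v)\log\frac{1}{|z_e(u)-z_e(v)|}d\mu_\tau d\mu_\tau$.

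Next we push forward by $z_e$. By Proposition \ref{EP_AC}, $z_e$ is a bijection, so $g=f\circ z_e^{-1}$ is well defined on $\gamma$, and by the transport property \eqref{EP_COV} this integral equals $\iint g(x)g(y)\log\frac1{|x-y|}\,d\mu_{\gamma,V}(x)d\mu_{\gamma,V}(y)$, with $\int g\,d\mu_{\gamma,V}=0$. The signed measure $\nu=g\,d\mu_{\gamma,V}$ is compactly supported and has total mass zero. Its logarithmic energy is finite, since the form is absolutely convergent by Assumption \ref{ass1_L2} applied to $\mathcal{L}_1$. For such $\nu$, the standard potential-theoretic fact used in Proposition \ref{log_SPD} (e.g.\ \cite[Lemma I.1.8]{SaffTotik1997}) gives $I[\nu]\ge 0$, with equality only if $\nu=0$.

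For strictness, note that $\nu=0$ forces $g=0$ $\mu_{\gamma,V}$-a.e. Then $f=g\circ z_e=0$ $\mu_\tau$-a.e., again by the pushforward identity, and hence $\vec f=0$, a contradiction. So $\vec fL\vec f^t>0$, which is the first claim.

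For the statement about $K^{\gamma,V}_\tau$, take $\vec h=\vec f\Lambda^{1/2}$ in the domain $\ell^2(\Z_{\geq 1})\Lambda^{1/2}$. Then
\[
\vec h K^{\gamma,V}_\tau\vec h^t=\vec fG\vec f^t>-\vec f\Lambda\vec f^t=-\vec h\vec h^t,
\]
which is the claim $K^{\gamma,V}_\tau>-I$ there.

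The main obstacle is the justification step: interchanging the infinite expansion with the double integral, and applying positivity of the log energy to a signed measure with only an $L^2$ density rather than a nice one. The interchange is handled by the $L^2(\mu_\tau\otimes\mu_\tau)$ convergence of the expansion against $f\otimes f\in L^2$. For positivity, if the energy lemma requires finite energy of $|\nu|$, I would first prove the inequality for $\vec f$ with finitely many non-zero entries (for which the energy is finite by Assumption \ref{ass1_L2}) and then conclude by continuity of the bounded form $L$. Strictness, however, requires the direct non-degeneracy argument for the given $\nu$ and not a limit. So I would first check that $|\nu|$ has finite energy, which holds because $|f|\otimes|f|\in L^2$ and the kernel $\mathcal{L}_1$ lies in $L^2$, up to bounded $V$-terms.
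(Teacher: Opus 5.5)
Your proof is correct and follows the paper's own argument. You reduce to $\vec f L\vec f^t>0$, transport the quadratic form of $\mathcal{L}_1$ via $\mu_{\gamma,V}=z_e\#\mu_\tau$ to the logarithmic energy of the zero-mass signed measure $(f\circ z_e^{-1})\,d\mu_{\gamma,V}$ (the $V$-terms drop out because $\int f\,d\mu_\tau=0$), apply positivity and non-degeneracy of that energy, and pull strictness back through the bijection $z_e$. Your Cauchy--Schwarz check that $|\nu|$ has finite energy, using $\mathcal{L}_1\in L^2$ and $|f|\otimes|f|\in L^2$, is a justification the paper leaves implicit, and it makes your fallback through finitely supported $\vec f$ unnecessary.
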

\begin{proof}
    We have to show that $L>0$. Let $f\in L_0^2(\tau,\mu_\tau)$ and note that
    \begin{equation} \label{EP_COV_2}
        \int fz_e^{-1} d\mu_{\gamma,V} = \int f d\mu_\tau =0,
    \end{equation}
    as $\mu_{\gamma,V} = z_e\#\mu_\tau$. Hence, by variations of the energy functional $\mu \mapsto I_{\gamma,0}[\mu]$, we have
        $$ \iint \log \frac{1}{|u-v|} (fz_e^{-1})(u) (fz_e^{-1})(v) d\mu_{\gamma,V}(u) d\mu_{\gamma,V}(v) \geq 0. $$
    On the other hand, by \eqref{EP_COV_2}, 
     $$ \iint (V(u)+V(v)) (fz_e^{-1})(u) (fz_e^{-1})(v) d\mu_{\gamma,V}(u) d\mu_{\gamma,V}(v) = 0. $$
    It then follows from $\mu_{\gamma,V} = z_e\#\mu_\tau$ that
    $$ \iint \mathcal{L}_1(u,v) f(u) f(v) d\mu_\tau(u) d\mu_\tau(v) \geq 0, $$
    In the Fourier basis, with $f = \vec{f}^t \phi$, this becomes $ \vec{f}^t L \vec{f} \geq 0$. Suppose that
        $$ \iint \mathcal{L}_1(u,v) f(u) f(v) d\mu_\tau(u) d\mu_\tau(v) =0, $$
    then, as $\mu_{\gamma,V} = z_e\#\mu_\tau$, 
     $$ \iint \left(\log \frac{1}{|u-v|} + \frac{1}{2} (V(u)+V(v)) \right) (fz_e^{-1})(u) (fz_e^{-1})(v) d\mu_{\gamma,V}(u) d\mu_{\gamma,V}(v) =0, $$
     and hence also
     $$ \iint \log \frac{1}{|u-v|} (fz_e^{-1})(u) (fz_e^{-1})(v) d\mu_{\gamma,V}(u) d\mu_{\gamma,V}(v) =0. $$
     Non-degeneracy of the energy functional $\mu \mapsto I_{\gamma,0}[\mu]$, then implies that $(fz_e^{-1}) d\mu_{\gamma,V}=0$ and thus $(fz_e^{-1})=0$ on $\text{supp}(\mu_{\gamma,V})$. Since $\text{supp}(\mu_{\gamma,V}) = \gamma$ and $z_e:\tau\to\gamma$ is bijective, we can then conclude that $f=0$ on $\tau$ and thus $\vec{f}=0$.  
\end{proof}

Under some additional assumptions on $G$, we can further strengthen this inequality.

\begin{prop}[Strengthened Grunsky inequality] \label{SGI}
    Suppose that Assumption \ref{ass1_L2} holds and that $G\Lambda^{-1}:\ell^2(\Z_{\geq 1})\to\ell^2(\Z_{\geq 1})$ is a Hilbert-Schmidt operator. Then $K^{\gamma,V}_{\tau}$ defines a self-adjoint Hilbert-Schmidt operator on $\ell^2(\Z_{\geq 1})$ and there exists $\kappa\in(0,1)$ such that $K^{\gamma,V}_{\tau} \geq -\kappa I$ on $\ell^2(\Z_{\geq 1})$.
\end{prop}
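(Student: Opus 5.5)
We establish the two assertions in turn: first that $K^{\gamma,V}_{\tau}$ is a self-adjoint Hilbert--Schmidt operator, and then the uniform lower bound, which we obtain by combining the Grunsky inequality of Proposition~\ref{GI} with compactness.

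\emph{Hilbert--Schmidt property.} We write
$$K^{\gamma,V}_{\tau} = \Lambda^{-\frac12} G \Lambda^{-\frac12} = \Lambda^{\frac12}\,\big(\Lambda^{-1} G\big)\,\Lambda^{-\frac12}\cdot\ldots$$
More concretely, we compare entries. We have $|(K^{\gamma,V}_{\tau})_{k,l}| = |G_{k,l}|/\sqrt{\lambda_k\lambda_l}$ and $|(G\Lambda^{-1})_{k,l}| = |G_{k,l}|/\lambda_l$. Since $G$ is symmetric, $(G\Lambda^{-1})_{l,k} = G_{k,l}/\lambda_k$. By AM--GM,
$$\frac{1}{\lambda_k\lambda_l} \le \frac12\Big(\frac1{\lambda_k^2}+\frac1{\lambda_l^2}\Big),$$
so
$$\sum_{k,l} |(K^{\gamma,V}_{\tau})_{k,l}|^2 \le \tfrac12\big(\|G\Lambda^{-1}\|_{HS}^2+\|G\Lambda^{-1}\|_{HS}^2\big) < \infty.$$
Hence $K^{\gamma,V}_{\tau}$, whose matrix is real symmetric, extends from the dense subspace $\ell^2(\Z_{\geq 1})\Lambda^{\frac12}$ (dense because it contains all finitely supported sequences) to a bounded self-adjoint Hilbert--Schmidt operator on $\ell^2(\Z_{\geq 1})$. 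Its quadratic form agrees with $\vec g\, K^{\gamma,V}_{\tau}\vec g^{\,t}$ computed from the expansion whenever $\vec g = \vec f\Lambda^{\frac12}$.

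\emph{Non-strict inequality $K \ge -I$.} For $\vec g=\vec f\Lambda^{\frac12}$ with $\vec f\ne 0$, Proposition~\ref{GI} gives $\vec g(I+K^{\gamma,V}_{\tau})\vec g^{\,t} = \vec f(\Lambda+G)\vec f^{\,t}>0$. Since both sides are continuous in $\vec g$ on $\ell^2$ and the subspace is dense, we get $I+K^{\gamma,V}_{\tau}\ge 0$ on all of $\ell^2(\Z_{\geq 1})$.

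\emph{Strict inequality and the main obstacle.} Because $K^{\gamma,V}_{\tau}$ is compact and self-adjoint, its spectrum consists of real eigenvalues accumulating only at $0$. The spectrum is therefore contained in $[-1,\infty)$, and $\inf\sigma(K^{\gamma,V}_{\tau})$ is either $\ge 0$ or an eigenvalue. It thus suffices to rule out $-1$ as an eigenvalue; then $\kappa := \max(\tfrac12, -\min\sigma(K^{\gamma,V}_{\tau}))<1$ works.

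Ruling out $-1$ is the key step, since Proposition~\ref{GI} only gives strictness on the subspace $\ell^2\Lambda^{\frac12}$. Suppose $\vec g K^{\gamma,V}_{\tau} = -\vec g$ with $\vec g\in\ell^2$. Then $\vec g = -\vec g\,\Lambda^{-\frac12}G\Lambda^{-\frac12}$. Setting $\vec f := \vec g\Lambda^{-\frac12}$ (a priori only a formal sequence), this reads $\vec f\Lambda = -\vec f G$, that is, $\vec f = -\vec f\, G\Lambda^{-1}$ formally. We bootstrap regularity. Write $\vec g = -(\vec g\Lambda^{-\frac12}G\Lambda^{-1})\Lambda^{\frac12}$, and note that by symmetry of $G$ the operator $\Lambda^{-\frac12}G\Lambda^{-1}$ has entries $G_{k,l}/(\lambda_k^{1/2}\lambda_l)$. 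By the same AM--GM-type bound, with weights $\lambda_k^{-1}\lambda_l^{-2}\le \lambda_l^{-2}\cdot\lambda_k^{-1}$, this is not obviously bounded. So instead we argue directly: $\vec g\Lambda^{-\frac12}G = -\vec g\Lambda^{\frac12}$, hence
$$\vec f\, G\Lambda^{-1} = \vec g\Lambda^{-\frac12}G\Lambda^{-1} = -\vec g\Lambda^{-\frac12}.$$
Writing $\vec h:=\vec g\Lambda^{-\frac12}$ and using $G\Lambda^{-1}$ bounded together with the symmetry of $G$, we get $\vec h = -\vec h\,G\Lambda^{-1}$, so it suffices to show $\vec h\in\ell^2$. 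Entrywise, $h_l = -\sum_k g_k G_{k,l}/(\sqrt{\lambda_k}\lambda_l)$, and Cauchy--Schwarz with $\sum_{k,l}G_{k,l}^2/(\lambda_k\lambda_l^2)$ controlled via the previous AM--GM estimate against $\|G\Lambda^{-1}\|_{HS}$ shows $\vec h\in\ell^2$, possibly after one extra iteration if needed. Then $\vec g = \vec h\Lambda^{\frac12}\in\ell^2\Lambda^{\frac12}$ is nonzero and satisfies $\vec h(\Lambda+G)\vec h^{\,t}=0$, contradicting Proposition~\ref{GI}. Hence $-1\notin\sigma(K^{\gamma,V}_{\tau})$, which yields $\kappa\in(0,1)$.
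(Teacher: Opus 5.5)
Your Hilbert--Schmidt estimate, the density argument giving $I+K^{\gamma,V}_{\tau}\ge 0$, and the reduction to excluding $-1$ as an eigenvalue all match the paper. The gap is in that last step. To invoke Proposition~\ref{GI} you need an eigenvector $\vec g$ with $\vec g K^{\gamma,V}_{\tau}=-\vec g$ to satisfy $\vec h=\vec g\Lambda^{-\frac12}\in\ell^2(\Z_{\geq1})$. You justify this by Cauchy--Schwarz with $\sum_{k,l}G_{k,l}^2/(\lambda_k\lambda_l^2)$, claiming this sum is controlled by $\|G\Lambda^{-1}\|_{\mathrm{HS}}^2=\sum_{k,l}G_{k,l}^2/\lambda_l^2$. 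That claim is false in general. The extra weight $\lambda_k^{-1}$ is unbounded, and the analogous AM--GM step would need $\sum_{k,l}G_{k,l}^2\lambda_l^{-3}<\infty$, which is not assumed. For example, take $\lambda_k=1/k$ and $G$ diagonal with $G_{k,k}=k^{-3/2}(\log k)^{-1}$ for $k\ge2$. Then $\|G\Lambda^{-1}\|_{\mathrm{HS}}^2=\sum_k (k\log^2 k)^{-1}<\infty$, but $\sum_k G_{k,k}^2k^3=\sum_k(\log k)^{-2}=\infty$. This is exactly the unboundedness of $\Lambda^{-\frac12}G\Lambda^{-1}$ that you flagged a few lines earlier. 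Iterating does not help either, since $(K^{\gamma,V}_{\tau})^m\Lambda^{-\frac12}=\Lambda^{-\frac12}(G\Lambda^{-1})^m$ always begins with the uncontrolled factor. So as written, nothing shows $\vec h\in\ell^2$.

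The missing idea is to avoid the regularity question altogether via the Fredholm alternative, as the paper does. For $\vec x\in\ell^2(\Z_{\geq1})$, $\vec x(I+G\Lambda^{-1})=0$ implies $\vec x(\Lambda+G)=0$, so $\vec x=0$ by Proposition~\ref{GI} applied directly in $\ell^2$. Since $G\Lambda^{-1}$ is compact, $I+G\Lambda^{-1}$ is therefore invertible, and hence so is its adjoint $I+\Lambda^{-1}G$. Now use the identity of bounded operators $(I+K^{\gamma,V}_{\tau})\Lambda^{\frac12}=\Lambda^{\frac12}(I+\Lambda^{-1}G)$; both sides have entries $\lambda_k^{\frac12}\delta_{k,l}+G_{k,l}\lambda_k^{-\frac12}$. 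It shows that $\vec g(I+K^{\gamma,V}_{\tau})=0$ forces $\vec g\Lambda^{\frac12}=0$, i.e.\ $\vec g=0$. In short, multiply the eigenvector by the bounded $\Lambda^{\frac12}$ rather than the unbounded $\Lambda^{-\frac12}$. Then let the index-zero property of identity-plus-compact operators transfer injectivity from $I+G\Lambda^{-1}$ to its adjoint.
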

\begin{proof}
    For notional convenience, denote $K=K^{\gamma,V}_{\tau}$. Since $G_{k,l}=G_{l,k}$,
    \begin{align*}
        \sum_{k,l\geq1}|K_{k,l}|^2
        =\sum_{k,l\geq1}\frac{|G_{k,l}|^2}{\lambda_k\lambda_l}
        \leq\frac12\sum_{k,l\geq1}|G_{k,l}|^2
          \left(\lambda_k^{-2}+\lambda_l^{-2}\right)
        =\|G\Lambda^{-1}\|_{\rm HS}^2<\infty.
    \end{align*}
    Hence $K$ is a self-adjoint Hilbert-Schmidt operator on $\ell^2(\Z_{\geq1})$. Let $x\in\ell^2(\Z_{\geq1})$ be finitely supported and set $f=x\Lambda^{-\frac{1}{2}}$. It follows from Proposition \ref{GI} that
    $$ \langle x,x(I+K)\rangle_{\ell^2}
        =\langle f,f(\Lambda+G)\rangle_{\ell^2}>0.$$
    By denseness and boundedness, $I+K\geq0$ on $\ell^2(\Z_{\geq1})$. Furthermore, if $x(I+G\Lambda^{-1})=0$, then $x(\Lambda+G)=0$ and thus $x=0$ by Proposition \ref{GI}. Since $G\Lambda^{-1}$ is compact, the Fredholm alternative theorem then shows that $I+G\Lambda^{-1}:\ell^2(\Z_{\geq1})\to\ell^2(\Z_{\geq1})$ is an invertible operator with bounded inverse. Hence, the same holds for $(I+G\Lambda^{-1})^{\ast}=I+\Lambda^{-1}G$. It then follows from the identity
    $$(I+K)\Lambda^{\frac{1}{2}} = \Lambda^{\frac{1}{2}}(I+\Lambda^{-1}G),$$
    that $I+K$ is injective. Indeed, if $x(I+K)=0$ then $x\Lambda^{\frac{1}{2}}(I+\Lambda^{-1}G)=0$ so that $x\Lambda^{\frac{1}{2}}=0$ and thus $x=0$. Since $K$ is compact, every nonzero element of the spectrum $\sigma(K)$ of $K$ is an eigenvalue. Hence, we must have that $-1\notin\sigma(K)$. Together with $I+K\geq0$, this implies that $I+K\geq\delta I$ for some $\delta>0$. The latter then certainly holds for some $\delta\in(0,1)$ as well. Taking $\kappa=1-\delta\in(0,1)$ then proves the desired result.
\end{proof}

\begin{rem}
    For a Jordan curve $\gamma$, $V=0$ and $\tau=\mathbb{T}$, this gives one side of the classical strengthened Grunsky inequality, see \cite[\S 9.4]{Pommerenke1975}. In this setting, the Hilbert-Schmidt assumption is a sufficient condition: the optimal condition to have the classical two-sided ineqality is that $\gamma$ is a quasicircle, see \cite{Kuhnau1982}. For a Jordan arc $\gamma$, $V=0$ and $\tau=[-1,1]$, the above is the strengthened arc-Grunsky inequality in \cite[Lem. 1.4]{CourteautJohanssonViklund2026}.
\end{rem}

In what follows, the interpolating log-kernel
\begin{equation*} \label{L_s}
    \mathcal{L}_s(u,v) = (1-s) \mathcal{L}_0(u,v) + s \mathcal{L}_1(u,v),\quad s\in[0,1],
\end{equation*}
which interpolates between $\mathcal{L}_0$ and $\mathcal{L}_1$ will play an important role. Observe that
\begin{equation} \label{L_s:G}
    \mathcal{L}_s(u,v) = \mathcal{L}_0(u,v) + s \mathcal{G}(u,v),
\end{equation}
hence, w.r.t. the basis $\psi$, we have
\begin{equation} \label{L_s_exp}
    \mathcal{L}_s(u,v) = I_{\tau}[\mu_{\tau}] + s G_{0,0} + \psi(u) L_s \psi(v)^t,\quad \text{a.e. } u,v\in\tau.
\end{equation}
where
    $$ L_s = I + s K^{\gamma,V}_{\tau}. $$
    
\begin{prop} \label{L_s_inv}
    Suppose that Assumption \ref{ass1_L2} holds and that $G\Lambda^{-1}:\ell^2(\Z_{\geq 1})\to\ell^2(\Z_{\geq 1})$ is a Hilbert-Schmidt operator. Then $L_s=I+sK^{\gamma,V}_{\tau}$ defines an invertible operator $\ell^2(\Z_{\geq 1})\to \ell^2(\Z_{\geq 1})$ with bounded inverse for all $s\in[0,1]$. Moreover,
    $$ \sup_{s\in[0,1]}\|L_s^{-1}\|\leq\frac{1}{1-\kappa}.$$
\end{prop}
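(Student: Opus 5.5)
The plan is to read this off from Proposition \ref{SGI}. Under the stated hypotheses, that proposition gives two facts: $K=K^{\gamma,V}_{\tau}$ is a bounded self-adjoint (indeed Hilbert-Schmidt) operator on $\ell^2(\Z_{\geq 1})$, and there is a $\kappa\in(0,1)$ with $K\geq -\kappa I$. Since $K$ is bounded and self-adjoint, $L_s=I+sK$ is a bounded self-adjoint operator on $\ell^2(\Z_{\geq 1})$ for each $s\in[0,1]$. It therefore suffices to prove a uniform coercivity bound $L_s\geq (1-\kappa)I$ and then apply the standard fact that a bounded self-adjoint operator bounded below by $c>0$ is invertible with inverse of norm at most $1/c$.

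The lower bound comes from writing $L_s$ as a convex combination. For $s\in[0,1]$ we have
\[
L_s=(1-s)I+s(I+K).
\]
Proposition \ref{SGI} gives $I+K\geq(1-\kappa)I$, and trivially $I\geq(1-\kappa)I$. Taking the convex combination, for every $x\in\ell^2(\Z_{\geq 1})$,
\[
\langle x, xL_s\rangle_{\ell^2}=(1-s)\|x\|^2+s\langle x,x(I+K)\rangle_{\ell^2}\geq(1-\kappa)\|x\|^2 .
\]

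Next I deduce invertibility from this bound. By Cauchy--Schwarz, $\|xL_s\|\geq(1-\kappa)\|x\|$. Hence $L_s$ is injective and has closed range. Because $L_s$ is self-adjoint, the orthogonal complement of its range equals its kernel, which is $\{0\}$, so $L_s$ is surjective. The inverse is then bounded with $\|L_s^{-1}\|\leq 1/(1-\kappa)$. Since $\kappa$ does not depend on $s$, taking the supremum over $s\in[0,1]$ gives the claim. As an alternative route, one could note that $\sigma(L_s)=1+s\,\sigma(K)\subset[1-s\kappa,\infty)\subset[1-\kappa,\infty)$ and use the spectral theorem.

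There is no real obstacle here; all the analytic content sits in Proposition \ref{SGI}. The one point to handle carefully is that $L_s$ acts from the right on row vectors. This is harmless, because $K$ is symmetric, so right multiplication defines the same self-adjoint operator as left multiplication.
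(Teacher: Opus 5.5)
Your proposal is correct and matches the paper's proof, which likewise reads off $L_s\geq(1-s\kappa)I\geq(1-\kappa)I$ from the strengthened Grunsky inequality in Proposition \ref{SGI}. Your convex-combination form of $L_s$ and the explicit deduction of bounded invertibility from coercivity of a bounded self-adjoint operator spell out the step the paper calls immediate.
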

\begin{proof}
     This follows immediately from Proposition \ref{SGI} as $L_s\geq(1-s\kappa)I\geq(1-\kappa)I$.
\end{proof}

In what follows and throughout the next sections, we will use the convention that, for every function $f\in C^1(\tau)$,
\begin{equation} \label{tan_der}
    f^{(1)}(\tau(t)) = \frac{(f\tau)'(t)}{|\tau'(t)|}=(f\tau)'(t),\quad t\in [a_\tau,b_\tau].
\end{equation}
We will use the same convention for higher order derivatives and functions in several variables.
\medbreak

It turns out that the operator $L_s$ can be used to solve the following functional equation, which we refer to as the master equation, 
\begin{equation} \label{ME}
    \frac{1}{\beta}g(u)= {\rm p.v.} \int h_s(v) \mathcal{L}_s^{(0,1)}(u,v) d\mu_{\tau}(v),\quad \text{a.e. } u\in\tau.
\end{equation} 
Later, the master equation will play an essential role in our analysis. In order to show that it admits a solution given a function $g$ (and for the equation to make sense), we need to be able to take a derivative in the $L^2$-space, i.e. we will assume that all $\phi_n^{(1)}\in L^2(\tau,\mu_\tau)$ so that we have an expansion
\begin{equation} \label{tau_D}
    \phi^{(1)}(v)^t = \vec{d}_0^t + D \phi(v)^t,\quad \text{a.e. } u\in\tau,
\end{equation}
where 
\begin{align*}
    \vec{d}_0 = \int \phi^{(1)} d\mu_\tau, \quad D = \int (\phi^{(1)})^t \phi d\mu_\tau.
\end{align*}

\begin{prop} \label{ME_FOUR_SOL}
    Suppose that Assumption \ref{ass1_L2} holds, $\Lambda^{-1}G:\ell^2(\Z_{\geq 1})\to\ell^2(\Z_{\geq 1})$ is a Hilbert-Schmidt operator and $\Lambda^{\frac{1}{2}}D^t\Lambda^{\frac{1}{2}}:\ell^2(\Z_{\geq 1})\to\ell^2(\Z_{\geq 1})$ is an invertible operator with bounded inverse. If $g=\vec{g}\phi^t$ with $\vec{g}\Lambda^{-\frac{1}{2}}\in\ell^2(\Z_{\geq 1})$, there exists $h_s=\vec{h}_s\phi^t$ with $\vec{h}\Lambda^{-\frac{1}{2}}\in\ell^2(\Z_{\geq 1})$ such that \eqref{ME} is satisfied. Moreover,
        $$ \vec{h}_s \Lambda^{-\frac{1}{2}} = \frac{1}{\beta} (\vec{g} \Lambda^{-\frac{1}{2}}) L_s^{-1} (\Lambda^{\frac{1}{2}}D^t\Lambda^{\frac{1}{2}})^{-1} . $$
\end{prop}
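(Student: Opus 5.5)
We reduce the master equation \eqref{ME} to a linear equation for the coefficient vector of $h_s$ in the basis $\phi$, and then invert it using Proposition \ref{L_s_inv} together with the assumed invertibility of $\Lambda^{\frac{1}{2}}D^t\Lambda^{\frac{1}{2}}$. Note that the hypothesis here is on $\Lambda^{-1}G$ rather than $G\Lambda^{-1}$. Since $G$ and $\Lambda$ are symmetric, $(\Lambda^{-1}G)^t=G\Lambda^{-1}$ and transposition preserves the Hilbert--Schmidt norm. Proposition \ref{SGI} and Proposition \ref{L_s_inv} therefore apply, so $L_s$ is boundedly invertible on $\ell^2(\Z_{\geq1})$ for every $s\in[0,1]$.

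\textbf{Reducing \eqref{ME} to a linear equation.} Differentiate the expansion \eqref{L_s_exp} in the second variable. Written in $\phi$, it reads
$$\mathcal{L}_s(u,v)=I_\tau[\mu_\tau]+sG_{0,0}+\phi(u)\Lambda^{\frac12}L_s\Lambda^{\frac12}\phi(v)^t,$$
and the constant term disappears under differentiation. Substituting \eqref{tau_D} for $\phi^{(1)}(v)^t$ gives formally
$$\mathcal{L}_s^{(0,1)}(u,v)=\phi(u)\Lambda^{\frac12}L_s\Lambda^{\frac12}\big(\vec d_0^t+D\phi(v)^t\big).$$
Now insert $h_s=\vec h_s\phi^t=\phi\vec h_s^t$ and integrate against $d\mu_\tau(v)$. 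Orthonormality gives $\int\phi(v)^t h_s(v)\,d\mu_\tau=\vec h_s^t$, and $\int h_s\,d\mu_\tau=0$ because $h_s\in L^2_0$, so the $\vec d_0$ term drops out. The right-hand side of \eqref{ME} becomes $\phi(u)\Lambda^{\frac12}L_s\Lambda^{\frac12}D\vec h_s^t$. Matching coefficients with $\frac1\beta g=\frac1\beta\phi\vec g^t$, it suffices that
$$\tfrac1\beta\vec g^t=\Lambda^{\frac12}L_s\Lambda^{\frac12}D\vec h_s^t,$$
or, after transposing (using $L_s^t=L_s$),
$$\tfrac1\beta\vec g\Lambda^{-\frac12}=(\vec h_s\Lambda^{-\frac12})(\Lambda^{\frac12}D^t\Lambda^{\frac12})L_s.$$
Since both operators on the right are invertible by hypothesis and by Proposition \ref{L_s_inv}, and $\vec g\Lambda^{-\frac12}\in\ell^2$, we define $\vec h_s\Lambda^{-\frac12}$ by inverting these two operators in turn. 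This yields $\vec h_s\Lambda^{-\frac12}\in\ell^2$, and hence $\vec h_s\in\ell^2$ since $\Lambda$ is bounded. The ordering of the factors must be checked carefully against the convention of operators acting from the right, so that the result matches the displayed formula (with $L_s^{-1}$ applied first).

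\textbf{Justifying the formal steps.} This is where I expect the main obstacle. The kernel $\mathcal{L}_s^{(0,1)}$ is singular and the integral in \eqref{ME} is a principal value, so we cannot simply differentiate the $L^2$ expansion term by term and swap the sum with the integral. The plan is to move the derivative onto $h_s$. For nice $h_s$, integration by parts along the closed curve $\tau$ gives
$${\rm p.v.}\int h_s\,\mathcal{L}_s^{(0,1)}(u,\cdot)\,d\mu_\tau=-\int \mathcal{L}_s(u,v)\,\big(h_sw_\tau\big)^{(1)}(v)\,|dv|,$$
where the boundary terms vanish on a closed curve and the p.v. symmetric cutoff contributions cancel because the singularity is logarithmic. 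The right-hand side is an $L^2$ pairing with $\mathcal{L}_s(u,\cdot)$, so the $L^2$ expansion can be used legitimately. Expanding $(h_sw_\tau)^{(1)}/w_\tau$ in $\phi$ through \eqref{tau_D} recovers the same coefficient identity; alternatively one argues by duality, pairing both sides against $\phi_k(u)$. Finally, for general $\vec g$ with $\vec g\Lambda^{-\frac12}\in\ell^2$, we approximate by finitely supported $\vec g$ and pass to the limit using the uniform bound $\|L_s^{-1}\|\leq(1-\kappa)^{-1}$ and the bounded inverse of $\Lambda^{\frac12}D^t\Lambda^{\frac12}$, interpreting \eqref{ME} in the a.e./$L^2$ sense.
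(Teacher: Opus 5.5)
Your argument is the paper's. You expand $\mathcal{L}_s$ in $\phi$ via \eqref{L_s_exp}, differentiate to reduce \eqref{ME} to the coefficient identity $\frac{1}{\beta}\vec g=\vec h_s D^t(\Lambda^{\frac12}L_s\Lambda^{\frac12})$, and invert $L_s$ and $\Lambda^{\frac12}D^t\Lambda^{\frac12}$; with the right-action convention this gives exactly the displayed formula. Your transposition remark, which reconciles the hypothesis on $\Lambda^{-1}G$ with the $G\Lambda^{-1}$ required in Proposition \ref{L_s_inv}, and your integration-by-parts justification of the principal-value step go beyond the paper, which only does the computation formally; note that this justification as sketched uses that $\tau$ is closed.
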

\begin{proof}
It follows from \eqref{L_s_exp} that the right hand side of \eqref{ME} is given by 
    $$ \int \phi(u) (\Lambda+sG) \phi^{(1)}(v)^t \phi(v) \vec{h}_s^t d\mu_\tau(v).  $$
The master equation thus becomes
    $$ \frac{1}{\beta} \phi(u) \vec{g}^t = \phi(u) (\Lambda+sG) D \vec{h}_s^t,\quad \text{a.e. } u\in\tau,  $$
or equivalently,
\begin{equation} \label{ME_FOUR}
    \frac{1}{\beta} \vec{g} =  \vec{h}_s D^t (\Lambda^{\frac{1}{2}}L_s\Lambda^{\frac{1}{2}}).
\end{equation}
Since both $L_s$, see Proposition \ref{L_s_inv}, and $\Lambda^{\frac{1}{2}}D^t\Lambda^{\frac{1}{2}}$ are invertible, the solution of the master equation is therefore given by
    $$ \vec{h}_s \Lambda^{-\frac{1}{2}} = \frac{1}{\beta} (\vec{g} \Lambda^{-\frac{1}{2}}) L_s^{-1} (\Lambda^{\frac{1}{2}}D^t\Lambda^{\frac{1}{2}})^{-1} . $$
As $\vec{g}\Lambda^{-\frac{1}{2}}\in\ell^2(\Z_{\geq 1})$, this also implies that $\vec{h}\Lambda^{-\frac{1}{2}}\in\ell^2(\Z_{\geq 1})$.
\end{proof}

\subsection{Example}

Recall that we can expand any function
$f \in L^2(\mathbb{T}; \mu_{\mathbb{T}})$
in its Fourier series
\[f(z)=\sum_{k \in \mathbb{Z}} \hat{f}_k z^k,\quad \text{a.e. } z \in \mathbb{T},\]
as the set $\{ z^k : k \in \mathbb{Z} \}$ is an orthonormal basis for $L^2(\mathbb{T};\mu_{\mathbb{T}})$. In the variable $z = e^{i\theta}$, this induces an expansion in sines and cosines
\begin{equation*}
    f(e^{i\theta}) = \hat{f}_0 + \sum_{k\geq 1} [(\hat{f}_k+\hat{f}_{-k}) \cos(k\theta) + i(\hat{f}_k-\hat{f}_{-k})\sin(k\theta)],\quad \text{a.e. } \theta\in[0,2\pi].
\end{equation*}
The advantage of such an expansion is that it involves a real-valued basis. Denote 
$$ \phi_k(e^{i\theta}) = \begin{dcases}
      \sqrt{2}\cos(|k|\theta),\quad k\in\Z_{>0}, \\
      \sqrt{2} \sin(|k|\theta),\quad k\in\Z_{<0}.
\end{dcases} $$
Then $\phi = (\phi_k)_{k\in\Z_{\neq 0}}$ is an orthonormal basis for $L_0^2(\mathbb{T},\mu_{\mathbb{T}})$.
\medbreak

The $\log$-kernel on $\mathbb{T}$ takes the special form
    $$ \mathcal{L}_0(e^{iu},e^{iv}) = - \log\left| 2 \sin\left(\frac{u-v}{2}\right) \right|,\quad u,v\in[0,2\pi]: u\neq v. $$
We may expand $\mathcal{L}_0$ in terms of $\phi$ as follows, which shows that $\mathcal{L}_0\in L^2(\mathbb{T}^2,\mu_{\mathbb{T}}\otimes\mu_{\mathbb{T}})$.

\begin{prop}
    Define $\Lambda= [\frac{1}{2|k|} \delta_{k,l}]_{k,l\in\Z_{\neq 0}}$, then
    \begin{equation*}
        \mathcal{L}_0(e^{iu},e^{iv}) = \phi(u) \Lambda \phi(v)^t,\quad u,v\in[0,2\pi]: u\neq v.
    \end{equation*}
    In particular,
    \begin{equation} \label{T_log_EV}
        \int \phi_n(u) \mathcal{L}_0(u,v) d\mu_{\mathbb{T}}(u) = \frac{1}{2|k|} \phi_n(v).
    \end{equation}
\end{prop}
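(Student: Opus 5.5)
The plan is to start from the classical Fourier series
\[
-\log|1-e^{i\theta}| = \sum_{k\geq 1}\frac{\cos(k\theta)}{k},\qquad \theta\in(0,2\pi),
\]
which follows by taking real parts in $-\log(1-z)=\sum_{k\geq1} z^k/k$ for $|z|<1$ and letting $z\to e^{i\theta}$. The series converges pointwise for $\theta\notin 2\pi\Z$ by Abel's theorem or Dirichlet's test, since the partial sums of $\cos(k\theta)$ are bounded there. Since $|e^{iu}-e^{iv}|=|1-e^{i(u-v)}|=|2\sin(\frac{u-v}{2})|$, this gives
\[
\mathcal{L}_0(e^{iu},e^{iv})=\sum_{k\geq1}\frac{\cos(k(u-v))}{k},\qquad u\neq v .
\]

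Next I apply the addition formula $\cos(k(u-v))=\cos(ku)\cos(kv)+\sin(ku)\sin(kv)$. Each term becomes
\[
\frac1k\bigl(\cos(ku)\cos(kv)+\sin(ku)\sin(kv)\bigr)=\frac1{2k}\bigl(\phi_k(u)\phi_k(v)+\phi_{-k}(u)\phi_{-k}(v)\bigr),
\]
because $\phi_{\pm k}$ carry the factor $\sqrt2$. Summing over $k\geq1$ is exactly $\phi(u)\Lambda\phi(v)^t$ with $\Lambda=[\frac1{2|k|}\delta_{k,l}]$. One point to note: the statement has no constant term, consistent with Proposition \ref{L_0_exp}, because $I[\mu_{\mathbb{T}}]=0$ (the capacity of $\mathbb{T}$ is $1$). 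This is also visible directly, since the Fourier series has no zeroth coefficient.

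For \eqref{T_log_EV}, I multiply the expansion by $\phi_n(u)$ and integrate against $d\mu_{\mathbb{T}}(u)$. The series converges in $L^2(\mathbb{T}^2,\mu_{\mathbb{T}}\otimes\mu_{\mathbb{T}})$ because $\sum_k 1/k^2<\infty$. So for a.e.\ $v$, integrating term by term in $u$ is justified: for fixed $v$ the series in $u$ converges in $L^2(\mu_{\mathbb{T}})$, being a Fourier series with square-summable coefficients. Orthonormality of $\phi$ then gives $\frac1{2|n|}\phi_n(v)$. Both sides are continuous in $v$: the left side is the convolution of $\phi_n$ with the integrable function $\log|2\sin(\cdot/2)|$. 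Hence the identity extends to every $v$.

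I expect the only delicate step to be the pointwise validity of the series on $u\neq v$, as opposed to merely a.e.\ or in $L^2$. Abel's theorem handles it, since the series converges for $\theta\notin 2\pi\Z$ and Abel's theorem identifies its sum with the radial limit $-\log|1-e^{i\theta}|$. The computation also shows that $\mathcal{L}_0\in L^2$, with squared norm $\sum_{k\neq0}\frac{1}{4k^2}$.
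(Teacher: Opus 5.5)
Your argument is the same as the paper's: expand $-\log|1-e^{i(u-v)}|=\sum_{k\geq1}\cos(k(u-v))/k$, then use the cosine addition formula to split each term into $\frac{1}{2k}(\phi_k(u)\phi_k(v)+\phi_{-k}(u)\phi_{-k}(v))$. Your added justifications are correct and only fill in details the paper leaves implicit: Dirichlet/Abel for pointwise convergence off the diagonal with the $\pm k$ terms grouped, and $L^2$ convergence plus continuity for the eigenvalue relation, which you rightly read with $\frac{1}{2|n|}$ in place of the paper's $\frac{1}{2|k|}$.
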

\begin{proof}
    The above follows from the fact that
        $$\mathcal{L}_0(e^{iu},e^{iv}) = -\log\left|1-e^{i(u-v)}\right| = \text{Re\,}\sum_{k\geq 1} \frac{e^{ik(u-v)}}{k},  $$ 
    and the identity $\cos(k(u-v))=\cos(ku)\cos(kv)+\sin(ku)\sin(kv)$.
\end{proof} 

It follows from \eqref{T_log_EV} that $\psi = (\psi_k)_{k\in\Z_{\neq 0}}$ with 
$$ \psi_k(e^{i\theta}) = \begin{dcases}
    |k|^{-\frac{1}{2}}\cos(|k|\theta),\quad k\in\Z_{>0}, \\
    |k|^{-\frac{1}{2}}\sin(|k|\theta),\quad k\in\Z_{<0},
\end{dcases} $$
is an orthogonal basis for $L_0^2(\mathbb{T},\mu_{\mathbb{T}})$ with the property that
$$ \log\frac{1}{|u-v|} = \psi(u) \psi(v)^t. $$

Whenever $\gamma$ is a $C^{1,\alpha}$ Jordan curve and $V\in C^{1,\alpha}(\gamma)$, we have $z_e\in C^{1,\alpha}(\mathbb{T})$ and $|z_e^{(1)}|\,\geq c>0$ by Proposition \ref{ep_reg}, and hence $\mathcal{G}\in C^0(\mathbb{T}^2)$. In that case, $\mathcal{G}\in L^2(\mathbb{T}^2,\mu_{\mathbb{T}}\otimes \mu_{\mathbb{T}})$ and it admits an expansion of the form
    $$ \mathcal{G}(u,v) =  G_{0,0} + \phi(u) G \phi(v)^t,\quad u,v\in\tau, $$
or alternatively,
    $$ \mathcal{G}(u,v) =  G_{0,0} + \psi(u) K^{\gamma,V}_{\mathbb{T}} \psi(v)^t,\quad u,v\in\tau, $$
in terms of $K^{\gamma,V}_{\mathbb{T}} = [2|kl|^{\frac{1}{2}} G_{k,l}]_{k,l\in\Z_{\neq 0}}$. At this level of regularity, Proposition \ref{GI} only gives $K^{\gamma,V}_{\mathbb{T}}>-I$ on $\ell^2(\Z_{\geq 1})\Lambda^{\frac{1}{2}}$. To apply Proposition \ref{SGI}, we must verify that $G\Lambda^{-1}$ is a Hilbert-Schmidt operator on $\ell^2(\Z_{\geq 1}$. Additional regularity of $z_e$ will give the required decay of the Grunsky coefficients to prove this.

\begin{prop} \label{reg_G}
    Let $m\in\Z_{\geq 1}$ and suppose that $z_e\in C^{m,\alpha}(\mathbb{T})$ and $V\in C^{m-1,\alpha}(\gamma)$. If $p,q\geq0$ and $p+q=m+\alpha-1$, then there exists $C_{p,q}>0$ such that for all $k,l\in\Z_{\neq 0}$,
        $$ |G_{k,l}| \leq \frac{C_{p,q}}{|k|^{p}|l|^{q}}. $$
\end{prop}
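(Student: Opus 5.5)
Our plan is to obtain the decay of $G_{k,l}$ by integrating by parts in each variable separately, exploiting the fact that $\mathcal{G}$ loses at most one derivative compared to $z_e$. Since the basis functions $\phi_k$ are $\sqrt{2}\cos(|k|\theta)$ or $\sqrt{2}\sin(|k|\theta)$, we have
\[
G_{k,l}=\iint \mathcal{G}(e^{iu},e^{iv})\,\phi_k(e^{iu})\,\phi_l(e^{iv})\,\frac{du}{2\pi}\frac{dv}{2\pi},
\]
so it suffices to bound the mixed Fourier coefficients $\hat{\mathcal{G}}(k,l)$ of $\mathcal{G}$ in the exponential basis, for $k,l\neq 0$. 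These satisfy $|\hat{\mathcal{G}}(k,l)|\le C|k|^{-p}|l|^{-q}$ whenever $\mathcal{G}$ has ``$p$ derivatives in $u$ and $q$ derivatives in $v$'' in a H\"older sense. The additive terms $\tfrac12((Vz_e)(u)+(Vz_e)(v))$ depend on only one variable, so they contribute nothing to $G_{k,l}$ for $k,l\neq0$. We therefore only have to treat $H(u,v)=\log\bigl|\frac{e^{iu}-e^{iv}}{z_e(e^{iu})-z_e(e^{iv})}\bigr|$, and the hypothesis on $V$ plays no real role beyond $V z_e$ being well defined.

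The first step is to establish regularity of $H$. We write
\[
\frac{z_e(e^{iu})-z_e(e^{iv})}{e^{iu}-e^{iv}}=\frac{\int_0^1 (z_e\tau)'(v+t(u-v))\,dt}{\int_0^1 ie^{i(v+t(u-v))}\,dt}.
\]
Since $(z_e\tau)'\in C^{m-1,\alpha}$, the numerator is $C^{m-1,\alpha}$ jointly in $(u,v)$. It is also nonvanishing: it tends to $(z_e\tau)'(v)$ on the diagonal, and $|(z_e\tau)'|\ge c$ by Proposition \ref{ep_reg}, while injectivity of $z_e$ handles the region away from the diagonal. Periodicity in $u-v$ needs care near $u-v=2\pi$, but the standard device is to work locally with $u-v\in(-\pi,\pi]$ and to note that $H$ is smooth off the diagonal. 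Hence $H\in C^{m-1,\alpha}(\mathbb{T}^2)$, with the required uniform bounds on all partial derivatives $\partial_u^a\partial_v^b H$ for $a+b\le m-1$, and those of total order $m-1$ are $\alpha$-H\"older.

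The second step handles integer splittings. If $p=a+\theta$ and $q=b+\theta'$, consider first the case $p=a+\alpha$, $q=b$ with $a+b=m-1$. We integrate by parts $a$ times in $u$ and $b$ times in $v$, gaining $|k|^{-a}|l|^{-b}$. The remaining coefficient of $\partial_u^a\partial_v^bH$ then picks up $|k|^{-\alpha}$ from the standard shift argument: replace $u$ by $u+\pi/k$, average, and use $\alpha$-H\"older continuity in $u$, uniformly in $v$. The symmetric case $p=a$, $q=b+\alpha$ is handled the same way.

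The last step, and the main obstacle, is the case of general real $p,q\ge0$ with $p+q=m-1+\alpha$, where both exponents may be nonintegral. Here I would interpolate: for fixed $(k,l)$, the bounds $|k|^{-p_j}|l|^{-q_j}$ at the endpoint splittings combine via $\min(A,B)\le A^{1-\lambda}B^{\lambda}$, which yields every intermediate pair $(p,q)$ on the segment joining two admissible splittings. The endpoints $(m-1+\alpha,0)$, $(m-2+\alpha,1)$, \dots, together with the analogous splittings with $\alpha$ on the $v$ side, have convex hull covering the whole line $p+q=m-1+\alpha$ with $p,q\ge0$. The constant is then $C_{p,q}=\max$ of the endpoint constants. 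The delicate point is the joint H\"older control of the top-order mixed derivatives near the diagonal, which rests on the integral representation above and the lower bound on $|z_e^{(1)}|$.
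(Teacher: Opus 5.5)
Your proposal is correct and follows essentially the same route as the paper: the difference-quotient integral representation gives $\mathcal{G}\in C^{m-1,\alpha}(\mathbb{T}^2)$, and integration by parts plus the H\"older translation estimate finish the argument. (A small correction: off the diagonal $H$ is $C^{m,\alpha}$ rather than smooth, which is all you need.) Your interpolation between splittings is a rephrasing of the paper's shortcut, which integrates all $m-1$ times in the variable with the larger frequency and then uses $|k|^p|l|^q\le\max\{|k|,|l|\}^{p+q}$. So only the two extreme splittings are needed, and the ``main obstacle'' is not really one. Your observation that the one-variable terms $\tfrac12((Vz_e)(u)+(Vz_e)(v))$ contribute nothing to $G_{k,l}$ for $k,l\neq0$ is correct, and it shows the hypothesis on $V$ is not needed for this estimate.
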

\begin{proof}
    Since
        $$ \frac{z_e(e^{iu})-z_e(e^{iv})}{e^{iu}-e^{iv}} = \frac{v-u}{e^{iu}-e^{iv}} \int_0^1 z_e^{(1)}(e^{i(v+a(v-u))}) da,\quad u,v\in\mathbb{T}. $$
    the assumptions imply that $\mathcal{G}\in C^{m-1,\alpha}(\mathbb{T}^2)$. Hence, after integrating $m-1$ times in the coordinate with the larger Fourier frequency and using the standard translation estimate for a H\"older continuous function, we obtain
    $$|G_{k,l}|\leq C_{m,\alpha}\max\{|k|,|l|\}^{-(m-1+\alpha)}.$$
    Since $|k|^p|l|^q\leq\max\{|k|,|l|\}^{p+q}$, the desired result then follows.
\end{proof}

The strengthened Grunsky inequality in Proposition \ref{SGI} can then be proven under the following assumptions.

\begin{prop} \label{SGI_UC}
    Suppose that $z_e\in C^{3,\alpha}(\mathbb{T})$ and $V\in C^{2,\alpha}(\gamma)$. Then $K^{\gamma,V}_{\mathbb{T}}$ defines a self-adjoint Hilbert-Schmidt operator on $\ell^2(\Z_{\neq 0})$ and there exists $\kappa\in(0,1)$ such that $K^{\gamma,V}_{\mathbb{T}} \geq -\kappa I$ on $\ell^2(\Z_{\neq 0})$.
\end{prop}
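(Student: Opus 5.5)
The proof reduces Proposition \ref{SGI_UC} to Proposition \ref{SGI} with $\tau=\mathbb{T}$. The only hypothesis of Proposition \ref{SGI} that still needs checking is that $G\Lambda^{-1}$ is Hilbert--Schmidt on $\ell^2(\Z_{\neq 0})$.

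First, Assumption \ref{ass1_L2} holds. On $\mathbb{T}$ we have $w_{\mathbb{T}}=1/2\pi>0$, and $w_{\gamma,V}>0$ by hypothesis, so Assumption \ref{ass2} is satisfied. The kernel $\mathcal{L}_0$ lies in $L^2(\mathbb{T}^2,\mu_{\mathbb{T}}\otimes\mu_{\mathbb{T}})$ by the explicit expansion $\mathcal{L}_0=\phi\Lambda\phi^t$ with $\Lambda=[\frac{1}{2|k|}\delta_{k,l}]$, since $\sum_k (2|k|)^{-2}<\infty$. Since $z_e\in C^{3,\alpha}\subset C^{1,\alpha}$, Proposition \ref{ep_reg} and the discussion preceding Proposition \ref{reg_G} give $|z_e^{(1)}|\geq c>0$. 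Hence $\mathcal{G}$ is continuous on $\mathbb{T}^2$, and in particular it is square integrable.

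Next, note that $(G\Lambda^{-1})_{k,l}=2|l|\,G_{k,l}$, so
$$\|G\Lambda^{-1}\|_{\rm HS}^2=4\sum_{k,l\neq 0}|l|^2|G_{k,l}|^2 .$$
Apply Proposition \ref{reg_G} with $m=3$; its hypotheses $z_e\in C^{3,\alpha}(\mathbb{T})$ and $V\in C^{2,\alpha}(\gamma)$ are exactly what we assume. This gives $|G_{k,l}|\leq C_{p,q}|k|^{-p}|l|^{-q}$ for every split $p+q=2+\alpha$. Choosing $p=\tfrac12+\tfrac{\alpha}{2}$ and $q=\tfrac32+\tfrac{\alpha}{2}$, we get
$$|l|^2|G_{k,l}|^2\leq C\,|k|^{-1-\alpha}|l|^{-1-\alpha}.$$
This is summable over $k,l\neq 0$ because $\alpha>0$. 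So $G\Lambda^{-1}$ is Hilbert--Schmidt.

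Proposition \ref{SGI} now applies. It shows that $K^{\gamma,V}_{\mathbb{T}}=\Lambda^{-1/2}G\Lambda^{-1/2}$, which is the matrix $[2|kl|^{1/2}G_{k,l}]$ (an index-set change from $\Z_{\geq 1}$ to $\Z_{\neq 0}$, harmless by relabeling), is a self-adjoint Hilbert--Schmidt operator, and that $K^{\gamma,V}_{\mathbb{T}}\geq-\kappa I$ for some $\kappa\in(0,1)$. Self-adjointness uses $G_{k,l}=G_{l,k}$, which holds because $\mathcal{G}$ is symmetric.

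The only real obstacle is the exponent bookkeeping. The Hilbert--Schmidt norm carries the weight $|l|^2$, while Proposition \ref{reg_G} only supplies a total decay of $2+\alpha$. This has to be split so that each variable gets strictly more than $\tfrac12$ after squaring, which is possible precisely because $\alpha>0$. That is why $C^{3,\alpha}$ regularity for $z_e$, rather than $C^{2,\alpha}$, is required.
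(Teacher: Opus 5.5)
Your proposal is correct and matches the paper's proof: you show $G\Lambda^{-1}$ is Hilbert--Schmidt by applying Proposition \ref{reg_G} with $m=3$, splitting the decay exponent $2+\alpha$ into $\tfrac12+\tfrac\alpha2$ and $\tfrac32+\tfrac\alpha2$, and then invoke Proposition \ref{SGI}. The only differences are cosmetic. You put the heavier exponent on $l$, which matches the literal entries $(G\Lambda^{-1})_{k,l}=2|l|G_{k,l}$, while the paper puts it on $k$; the two are equivalent because $G_{k,l}=G_{l,k}$. Your explicit check of Assumption \ref{ass1_L2} is something the paper leaves implicit, and like the paper it tacitly uses the standing regularity of $\gamma$ to get $|z_e^{(1)}|\geq c>0$.
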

\begin{proof}
    Here $\lambda_k=(2|k|)^{-1}$. Applying Proposition \ref{reg_G} with
        $p=\frac{3}{2}+\frac{\alpha}{2}$ and $q=\frac{1}{2}+\frac{\alpha}{2}$,
    gives
        $$\|G\Lambda^{-1}\|_{\rm HS}^2
        =4\sum_{k,l\in\Z_{\neq0}}|k|^2|G_{k,l}|^2
        \leq C\sum_{k,l\in\Z_{\neq0}}|k|^{-1-\alpha}|l|^{-1-\alpha}<\infty.$$
    The desired result then follows from Proposition \ref{SGI}.
\end{proof}

For the Fredholm determinant $\det(I+K^{\gamma,V}_{\mathbb{T}})$ to be well-defined, the operator $K^{\gamma,V}_{\mathbb{T}}$ also needs to be trace class. If $z_e\in C^{4,\alpha}(\mathbb{T})$ and $V\circ z_e\in C^{3,\alpha}(\mathbb{T})$, Proposition \ref{reg_G} with $p=q=(3+\alpha)/2$ gives
    $$\sum_{k,l\neq0}|K_{k,l}|
    \leq C\sum_{k,l\neq0}|k|^{-1-\alpha/2}|l|^{-1-\alpha/2}<\infty,$$
so that $K$ is trace class. Proposition \ref{SGI_UC} then implies that $\det(I+K)>0$. In particular, the quantity $\log\det(I+K_{\mathbb{T}}^{\gamma,V})$ in Theorem \ref{PF} is well-defined under its stated hypotheses.
\medbreak

We will now have a closer look at the master equation \eqref{ME} on $\tau=\mathbb{T}$. On $\mathbb{T}$, we have $\vec{d}_0 = 0$ and
\begin{equation} \label{T_D} 
    D = \int (\phi^{(1)})^t \phi d\mu_\tau  = \begin{pmatrix} 0 & -\left[ k \delta_{k,l}\right]_{k,l\geq 1} \\ -\left[ k \delta_{k,l}\right]_{k,l\leq -1} & 0 \end{pmatrix}.
\end{equation}
Note that $D^t = -D$, which leads to the integration by parts rule
\begin{equation} \label{IBP}
    \int f g^{(1)} d\mu_{\tau} = - \int f^{(1)} g d\mu_{\tau},
\end{equation}
for $f,g \in L_0^2(\tau,\mu_\tau)$. Indeed, write $f(u) = \vec{f} \phi(u)^t$ and  $g(u) = \vec{g} \phi(u)^t$, then
\begin{align*}
f^{(1)}(u) =\vec{f} D \phi(u)^t, \quad g^{(1)}(u) = \vec{g} D \phi(u)^t.
\end{align*}
Hence,
\begin{align*}
\int f g^{(1)} d\mu_\tau = f_0 \vec{d}_0 \vec{g}^t + \vec{f} D^t \vec{g}^t,\quad \int f^{(1)} g d\mu_\tau &= \vec{f} \vec{d}_0^t g_0 + \vec{f} D \vec{g}^t,
\end{align*}
and thus
$$\int f g^{(1)} d\mu_\tau = - \int f^{(1)} g d\mu_\tau.$$
This integration by parts rule will play an important role in Section \ref{S_RV}. 
\medbreak

The result below shows how regularity of $g:\mathbb{T}\to\R$ corresponds to regularity of the associated solution $h_s:\mathbb{T}\to\R$ of the master equation \eqref{ME}. Note that
    $$ \tilde{D} = \Lambda^{\frac{1}{2}} D \Lambda^{\frac{1}{2}} = \frac{1}{2} \begin{pmatrix} 0 & -I \\ I & 0 \end{pmatrix}. $$
is invertible with inverse $\tilde{D}^{-1} = - 4 \tilde{D} $.

\begin{prop} \label{reg_h_s}
    Let $m\in\Z_{\geq 1}$ and suppose that $z_e\in C^{m+3,\alpha}(\mathbb{T})$ and $V\in C^{m+2,\alpha}(\gamma)$. If $g\in C^{m,\alpha}(\gamma)$ with $\int gd\mu_{\mathbb{T}}=0$ and $h_s$ denotes the solution of the master equation \eqref{ME} given $g$, then there exists $c>0$ such that, for all $s\in [0,1]$ and $k\in\Z_{\neq 0}$,
    $$|\hat{h}_{s,k}|\,\leq \frac{c\|g\|_{m,\alpha}}{|k|^{m+\alpha}}.$$
\end{prop}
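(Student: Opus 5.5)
The plan is to avoid inverting $L_s$ at the level of decay estimates. Instead I will use the Fourier form \eqref{ME_FOUR} of the master equation as a fixed-point identity: the "diagonal" part is explicitly invertible, and the Grunsky part is treated as a smoothing perturbation whose size is controlled by an a priori $\ell^2$ bound.

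Throughout I read $g$ as a function on $\mathbb{T}$ (i.e.\ $g\circ z_e$, which lies in $C^{m,\alpha}(\mathbb{T})$ with comparable norm by Proposition \ref{ep_reg}). The hypotheses $z_e\in C^{m+3,\alpha}\subset C^{3,\alpha}$ and $V\in C^{2,\alpha}$ allow me to apply Proposition \ref{SGI_UC}, so $G\Lambda^{-1}$ is Hilbert--Schmidt. Proposition \ref{L_s_inv} then gives $\sup_{s}\|L_s^{-1}\|\le (1-\kappa)^{-1}$.

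\textbf{Step 1: a priori bound.} Since $\tilde D^{-1}=-4\tilde D$ is bounded, Proposition \ref{ME_FOUR_SOL} yields
\[
\|\vec h_s\Lambda^{-1/2}\|_{\ell^2}\le \frac{C}{\beta(1-\kappa)}\|\vec g\Lambda^{-1/2}\|_{\ell^2}\le C\|g\|_{m,\alpha},
\]
uniformly in $s\in[0,1]$. Here I use $\|\vec g\Lambda^{-1/2}\|^2\asymp\sum|k||\hat g_k|^2$ and the standard Fourier decay $|\hat g_k|\le C\|g\|_{m,\alpha}|k|^{-(m+\alpha)}$ with $m\ge1$.

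\textbf{Step 2: rewrite the equation.} Write \eqref{ME_FOUR} as $\frac1\beta\vec g=\vec h_sD^t\Lambda+s\,\vec h_sD^tG$. By \eqref{T_D}, $D^t\Lambda$ is $\tfrac12$ times a signed permutation exchanging the indices $k$ and $-k$. Solving for $\vec h_s$ therefore gives, entrywise,
\[
|\hat h_{s,k}|\le \frac{2}{\beta}|\hat g_{\mp k}|+2\,|(\vec h_sD^tG)_{\mp k}|.
\]
The first term is at most $C\|g\|_{m,\alpha}|k|^{-(m+\alpha)}$.

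\textbf{Step 3: the main obstacle, decay of $\vec h_sD^tG$.} This is where the regularity of $z_e$ is spent. Since $D^t$ multiplies the $j$-th entry by $\pm|j|$,
\[
|(\vec h_sD^tG)_l|\le\sum_j|j||\hat h_{s,j}||G_{j,l}|.
\]
I apply Proposition \ref{reg_G} with $m$ replaced by $m+3$ (legitimate since $z_e\in C^{m+3,\alpha}$ and $V\in C^{m+2,\alpha}$). This gives $p+q=m+2+\alpha$, and I choose $q=m+\alpha$, $p=2$, so that $|G_{j,l}|\le C|j|^{-2}|l|^{-(m+\alpha)}$. Then, by Cauchy--Schwarz and Step 1,
\[
\sum_j|j|^{-1}|\hat h_{s,j}|=\sum_j|j|^{-3/2}\big(|j|^{1/2}|\hat h_{s,j}|\big)\le C\|\vec h_s\Lambda^{-1/2}\|_{\ell^2}\Big(\sum_j|j|^{-3}\Big)^{1/2}\le C\|g\|_{m,\alpha},
\]
uniformly in $s$. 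Consequently $|(\vec h_sD^tG)_l|\le C\|g\|_{m,\alpha}|l|^{-(m+\alpha)}$, and combining with Step 2 proves the claim.

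The delicate points are getting the uniformity in $s$, which comes entirely from the strengthened Grunsky inequality through Proposition \ref{L_s_inv}, and choosing the split $(p,q)$ in Proposition \ref{reg_G} so that the sum over $j$ converges against only the weak a priori $H^{1/2}$-type control on $h_s$. This choice of split is what uses up three extra derivatives of $z_e$.
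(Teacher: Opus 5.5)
Your proof is correct and is essentially the paper's argument. The paper writes the same fixed-point identity in the rescaled $\psi$-basis, $\vec h_s^t=\frac1\beta\tilde D^{-1}\vec g^t-s\tilde D^{-1}K\tilde D\vec h_s^t$, bounds the perturbative term by Cauchy--Schwarz against the uniform $\ell^2$ bound from $\|(I+sK)^{-1}\|\le(1-\kappa)^{-1}$, and applies Proposition \ref{reg_G} with the same split of $m+\alpha+2$ into $m+\alpha$ on the free index and $2$ on the summed index. Your $\phi$-basis version with $G$ and $D^t\Lambda$ is this argument conjugated by $\Lambda^{1/2}$: your weighted norm $\|\vec h_s\Lambda^{-1/2}\|_{\ell^2}$ is exactly the paper's $\ell^2$ norm of the $\psi$-coefficients.
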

\begin{proof}
    We have to show that there exists $c_1>0$ such that the coefficients $h_{s,k}$ of $h_s$ in the expansion $h_s=\sum_{|k|\geq 1} h_{s,k} \psi_k$, which a priori only holds a.e. on $\mathbb{T}$, decay as $|h_{s,k}|\leq c_1/|k|^{m+\alpha-\frac{1}{2}}$ for $|k|\geq 1$. 
    We know that there exists $c_2>0$ such that the Fourier coefficients $\hat{g}_k$ of $g$ decay as $|\hat{g}_k|\leq c_2 \|g\|_{m,\alpha}/|k|^{m+\alpha}$ for $|k|\geq 1$, or equivalently, that there exists $c_3>0$ such that the coefficients $g_{k}$ of $g$ in the expansion $g=\sum_{|k|\geq 1} g_{k} \psi_k$ decay as $|g_{k}|\leq c_3/|k|^{m+\alpha-\frac{1}{2}}$ for $|k|\geq 1$. For notional convenience, denote $K=K^{\gamma,V}_{\mathbb{T}}$. It follows from \eqref{ME_FOUR} that
        $ \frac{1}{\beta} \vec{g}^t = (I+sK) \tilde{D} \vec{h}_s^t, $
    and thus
    \begin{equation} \label{h_s^t_series}
        \vec{h}_s^t = \frac{1}{\beta} \tilde{D}^{-1} \vec{g}^t - s\tilde{D}^{-1}K\tilde{D}\vec{h}_s^t.
    \end{equation}
    By the assumption,
        $$ |(D^{-1} \vec{g}^t)_{k,1}| = 2 |g_{-k}| \leq \frac{c_3\|g\|_{m,\alpha}}{|k|^{m+\alpha-\frac{1}{2}}}. $$
    On the other hand, by Hölder's inequality,
        $$ |(\tilde{D}^{-1}K\tilde{D}\vec{h}_s^t)_{k,1}| \leq \sum_{|l|\geq 1} |(\tilde{D}^{-1}K\tilde{D})_{k,l} h_{s,l}| \leq \|h_s\|_{L^2} \left(\sum_{|l|\geq 1} |(\tilde{D}^{-1}K^{\gamma,V}_{\mathbb{T}}\tilde{D})_{k,l}|^2\right)^{\frac{1}{2}}.  $$
    Since $z_e\in C^{m+3,\alpha}$ and $K_{k,l}=2 |kl|^{\frac{1}{2}}G_{k,l}$, whenever $p+q=m+\alpha+2$, there exists $c_4>0$ such that 
        $$|(\tilde{D}^{-1}K\tilde{D})_{k,l}| = \frac{1}{4}|K_{-k,-l}| \leq \frac{c_4}{|k|^{p-\frac{1}{2}}|l|^{q-\frac{1}{2}}}. $$
    Therefore,
        $$ |(\tilde{D}^{-1}K\tilde{D}\vec{h}_s^t)_{k,1}| \leq \frac{c_4 \|h_s\|_{L^2}}{|k|^{p-\frac{1}{2}}} \left(\sum_{|l|\geq 1} \frac{1}{|l|^{2q-1}}\right)^{\frac{1}{2}}, $$
    and we may take $p=m+\alpha$ and $q=2$. Finally, as $\vec{h}_s^t = \tilde{D}^{-1} \frac{1}{\beta} (I+sK)^{-1} \vec{g}^t$ and $\|(I+sK)^{-1}\|_{\ell^2} \leq 1/(1-\kappa)$, we obtain 
        $$ \|\vec{h}_s\|_{\ell^2} \leq c_5 \|\vec{g}\|_{\ell^2} \leq c_6 \|g\|_{m,\alpha},  $$
    as desired.
\end{proof}
\begin{rem}
    We may further refine the condition on $z_e$ by making use of the identity
        $$ \vec{h}_s^t = \frac{1}{\beta} \tilde{D}^{-1} (I-K^{\gamma,V}_{\mathbb{T}}\tilde{D}) \vec{g}^t + s^2 \tilde{D}^{-1}(K^{\gamma,V}_{\mathbb{T}})^2\tilde{D}\vec{h}_s^t ,$$
    which we obtain after applying \eqref{h_s^t_series} twice.
\end{rem}

\section{Proofs of the main results} \label{proofs}

\subsection{Strategy} \label{proofs_strat}

We will further build upon the method from \cite{CourteautJohansson2025}. One substantial new element will be Proposition \ref{G_lb} where we quantify the difference between the log-kernel and Grunsky kernel explicitly.
\medbreak

Define the interpolating Hamiltonian as 
    $$ \mathcal{H}_n^{[s]}(z_1,\dots,z_n) = (1-s) \mathcal{H}_n^0(z_1,\dots,z_n) + s \mathcal{H}_n^V(z_e(z_1),\dots,z_e(z_n)), $$
where $\mathcal{H}_n^V$ is defined in \eqref{H_n_def}. The interpolating Hamiltonian can be written in terms of the interpolating $\log$-kernel $\mathcal{L}_s$ as follows
\begin{align} \label{H_n^s_alt}
    \mathcal{H}_n^{[s]}(z_1,\dots,z_n) = \sum_{1\leq k\neq l\leq n} \mathcal{L}_s(z_k,z_l) + s \sum_{k=1}^n (Vz_e)(z_k).
\end{align}
Particularly, we can also write this in terms of the Grunsky kernel by making use of \eqref{L_s:G}. We will study linear statistics $\sum_{k=1}^n g(z_k)$ w.r.t. the interpolating gas
\[ \frac{1}{n! Z_{n,\beta}^{[s]}} \exp\left(-\frac{\beta}{2} \mathcal{H}_n^{[s]}(z_1,\dots,z_n) + s \sum_{k=1}^n \log |z_e^{(1)}(z_k)|\right) \prod_{k=1}^n |dz_k|,\]
on an appropriate contour $\tau$, where
\begin{equation} \label{s_Z}
    Z_{n,\beta}^{[s]} = \frac{1}{n!} \int_{\tau^n} \exp\left(-\frac{\beta}{2} \mathcal{H}_n^{[s]}(z_1,\dots,z_n) + s \sum_{k=1}^n \log |z_e^{(1)}(z_k)|\right) \prod_{k=1}^n |dz_k|.
\end{equation}
Using the integration rule
$$ \int_\gamma f(z) |dz| = \int_0^1 (fz_e\tau)(t) |(z_e\tau)'(t)| dt = \int_{\tau} (fz_e)(z) |z_e^{(1)}(z)| |dz|, $$
we recover
    $$ Z_{n,\beta}^{[0]} = Z_{n,\beta}^{\tau,0},\quad Z_{n,\beta}^{[1]} = Z_{n,\beta}^{\gamma,V}. $$
Moreover, if denote the expectation with respect to interpolating gas by $\mathbb{E}_{n,\beta}^{[s]}$, then
$$ \mathbb{E}_{n,\beta}^{[0]}\left[e^{\sum_{k=1}^n g(z_k)}\right] =  \mathbb{E}_{n,\beta}^{\tau,0}\left[e^{\sum_{k=1}^n g(z_k)}\right],\quad \mathbb{E}_{n,\beta}^{[1]}\left[e^{\sum_{k=1}^n (gz_e)(z_k)}\right] =  \mathbb{E}_{n,\beta}^{\gamma,V}\left[e^{\sum_{k=1}^n g(z_k)}\right].  $$

In order to describe the asymptotics of $\mathbb{E}_{n,\beta}^{[s]}\left[e^{\sum_{k=1}^n g(z_k)}\right]$, it will be more convenient to parametrize the Coulomb gas by $[a_\tau,b_\tau]^n$ and to examine
    $$  \frac{1}{n! Z_{n,\beta}^{[s]}} \int_{[a_\tau,b_\tau]^n} \exp\left(-\frac{\beta}{2} \widetilde{\mathcal{H}}_n^{[s]}(t_1,\dots,t_n) + \sum_{k=1}^n (s\log |\widetilde{z_e}'(t_k)|+\widetilde{g}(t_k))\right) \prod_{k=1}^n dt_k,$$
where    
$$\widetilde{\mathcal{H}}_n^{[s]}(t_1,\dots,t_n) = \mathcal{H}_n^{[s]}(\tau(t_1),\dots,\tau(t_n)),\quad  \widetilde{z_e}'(t) = z_e^{(1)}(\tau(t)),\quad \widetilde{g}(t) = g(\tau(t)).$$
This tilde-notation will appear often in the remainder of this subsection. Generally, for a function $f:\tau^n\to\R$, we will denote 
$$\tilde{f}(t_1,\dots,t_n) = \tilde{f}(\tau(t_1),\dots,\tau(t_n)),\quad (t_1,\dots,t_n)\in[a_\tau,b_\tau],$$
where, with slight abuse of notation, $\tau:[a_\tau,b_\tau]\to\tau$ denotes the arc-length parametrization of $\tau$. 

Let $\widetilde{\mathbb{E}}_{n,\beta}^{[s]}[\,\cdot\,]$ denote the expectation with respect to the Coulomb gas parametrized by $[a_\tau,b_\tau]^n$. Evidently,
\begin{equation} \label{LS_tilde}
    \mathbb{E}_{n,\beta}^{[s]}\left[e^{\sum_{k=1}^n g(z_k)}\right] = \widetilde{\mathbb{E}}_{n,\beta}^{[s]}\left[e^{\sum_{k=1}^n \tilde{g}(t_k)}\right].
\end{equation}

We can now perform a change of variables of the form $t_k\mapsto t_k + \eps \widetilde{h}(t_k)$ for some $\eps\geq 0$ and $\widetilde{h}=h\circ\tau$ with $h:\tau\to\R$ that will be specialized later. After such a change of variables, the initial expectation can also be written as
\begin{equation} \label{LS_V_s}
    \mathbb{E}_{n,\beta}^{[s]}\left[e^{\sum_{k=1}^n g(z_k)}\right] = \widetilde{\mathbb{E}}_{n,\beta}^{[s]}\left[e^{\widetilde{\mathcal{V}}_{g,h,\eps}^{[s]}(t)}\right],
\end{equation}
in terms of the discrepancy
\begin{align} \label{V_ini}
    \widetilde{\mathcal{V}}_{g,h,\eps}^{[s]}(t) = &-\frac{\beta}{2} (\widetilde{\mathcal{H}}_n^{[s]}(t_1+\eps \widetilde{h}(t_1),\dots,t_n+\eps \widetilde{h}(t_n))-\widetilde{\mathcal{H}}_n^{[s]}(t)) \nonumber \\
    &+ \sum_{k=1}^n\left[ s(\log |\widetilde{z_e}'(t_k+\eps\widetilde{h}(t_k))|-\log |\widetilde{z_e}'(t_k)|) + \widetilde{g}(t_k+\eps\widetilde{h}(t_k)) \right] \nonumber \\
    &+ \sum_{k=1}^n\log(1+\eps\widetilde{h}'(t_k)).
\end{align}
Without loss of generality, we will now assume that 
    $$\int g d\mu_\tau = 0.$$
    
We aim to show that $\widetilde{\mathcal{V}}_{g,h,\frac{1}{n}}^{[s]}$ admits a limit $\mathcal{V}_{g,h}^{[s]}\in\R$ in an appropriate sense. Together with some technical assumptions, the proposition below then allows us to conclude that
\begin{equation*}
    \lim_{n\to\infty} \mathbb{E}_{n,\beta}^{[s]}\left[e^{\sum_{k=1}^n g(z_k)}\right] = e^{\mathcal{V}_{g,h}^{[s]}}.
\end{equation*}

\begin{prop} \label{LS_lim_s_proof}
    Let $E_{n,K}^{[s]}\subset [a_\tau,b_\tau]^n$ and suppose that there exists $\mathcal{V}_{g,h}^{[s]}\in\R$ and $c_1,c_2,c_3>0$ with $c_2>c_3$ such that, for all $n\in\N$,
    \begin{enumerate}[label=$\roman*)$]
        \item \label{lim_E_V} $\lim_{n\to\infty} \widetilde{\mathbb{E}}_{n,\beta}^{[s]}\left[|\widetilde{\mathcal{V}}_{g,h,\frac{1}{n}}^{[s]}(t)-\mathcal{V}_{g,h}^{[s]}| \mathds{1}_{E_{n,K}^{[s]}}\right] = 0,$
        \item \label{est_V} $\sup_{t\in E_{n,K}^{[s]}} \left| \widetilde{\mathcal{V}}_{g,h,\frac{1}{n}}^{[s]}(t) - \mathcal{V}_{g,h}^{[s]} \right| \leq c_1,$ 
        \item \label{est_D} $\widetilde{\mathbb{E}}_{n,\beta}^{[s]}\left[\mathds{1}_{(E_{n,K}^{[s]})^c}(t)\right] \leq e^{-c_2n},$
        \item \label{est_V_glob} $\sup_{t\in [a_\tau,b_\tau]^n} | \widetilde{\mathcal{V}}_{g,h,\frac{1}{n}}^{[s]}(t)| \leq c_3n$.
    \end{enumerate}
    Then,
    \begin{equation} \label{LS_lim_s}
    \lim_{n\to\infty} \mathbb{E}_{n,\beta}^{[s]}\left[e^{\sum_{k=1}^n g(z_k)}\right] = e^{\mathcal{V}_{g,h}^{[s]}}.
\end{equation}
\end{prop}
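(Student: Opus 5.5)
The starting point is the identity \eqref{LS_V_s} with $\eps=\frac1n$:
\[
\mathbb{E}_{n,\beta}^{[s]}\Big[e^{\sum_k g(z_k)}\Big]=\widetilde{\mathbb{E}}_{n,\beta}^{[s]}\Big[e^{\widetilde{\mathcal{V}}(t)}\Big],
\qquad \widetilde{\mathcal{V}}=\widetilde{\mathcal{V}}_{g,h,\frac1n}^{[s]}.
\]
It is enough to show that $\widetilde{\mathbb{E}}_{n,\beta}^{[s]}[e^{\widetilde{\mathcal{V}}}]-e^{\mathcal{V}}\to 0$, where $\mathcal{V}=\mathcal{V}_{g,h}^{[s]}$. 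Write $E=E_{n,K}^{[s]}$ and split the expectation according to $E$ and $E^c$:
\[
\widetilde{\mathbb{E}}\big[e^{\widetilde{\mathcal{V}}}\big]-e^{\mathcal{V}}
=\widetilde{\mathbb{E}}\Big[\big(e^{\widetilde{\mathcal{V}}}-e^{\mathcal{V}}\big)\mathds{1}_{E}\Big]
+\widetilde{\mathbb{E}}\big[e^{\widetilde{\mathcal{V}}}\mathds{1}_{E^c}\big]
-e^{\mathcal{V}}\,\widetilde{\mathbb{E}}\big[\mathds{1}_{E^c}\big].
\]
Each of the three terms will be handled by one or two of the hypotheses.

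For the first term, use the elementary inequality $|e^x-e^y|\le e^{\max(x,y)}|x-y|$. By \ref{est_V}, on $E$ we have $\widetilde{\mathcal{V}}\le \mathcal{V}+c_1$. This gives
\[
\Big|\widetilde{\mathbb{E}}\big[(e^{\widetilde{\mathcal{V}}}-e^{\mathcal{V}})\mathds{1}_E\big]\Big|
\le e^{\mathcal{V}+c_1}\,\widetilde{\mathbb{E}}\big[|\widetilde{\mathcal{V}}-\mathcal{V}|\mathds{1}_E\big],
\]
which tends to $0$ by \ref{lim_E_V}.

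For the second term, \ref{est_V_glob} gives the pointwise bound $e^{\widetilde{\mathcal{V}}}\le e^{c_3 n}$. Combined with \ref{est_D}, this yields
\[
\widetilde{\mathbb{E}}\big[e^{\widetilde{\mathcal{V}}}\mathds{1}_{E^c}\big]\le e^{(c_3-c_2)n}\to 0,
\]
since $c_2>c_3$. The third term is bounded by $e^{\mathcal{V}}e^{-c_2 n}\to 0$, again by \ref{est_D}. Adding the three estimates proves \eqref{LS_lim_s}.

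There is no genuine obstacle in this argument. The only point requiring care is the middle term: the crude global bound \ref{est_V_glob} must be beaten by the exponentially small probability in \ref{est_D}, and this is exactly why the strict inequality $c_2>c_3$ is assumed. One should also note that the identity \eqref{LS_V_s} is taken for $n$ large enough that $t\mapsto t+\frac1n\widetilde h(t)$ is a valid change of variables, so that $\widetilde{\mathcal{V}}$ is finite and the logarithm in \eqref{V_ini} is defined.
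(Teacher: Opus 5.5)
Your proof is correct and follows the paper's own argument essentially step for step. Both split the expectation over $E_{n,K}^{[s]}$ and its complement, kill the complement using (iii)–(iv) with $c_2>c_3$, and control the main term with the mean-value bound for the exponential together with (ii) and (i). The only cosmetic difference is that you keep the factor $e^{\mathcal{V}_{g,h}^{[s]}}$ outside, whereas the paper normalizes by it and compares $\widetilde{\mathbb{E}}_{n,\beta}^{[s]}\bigl[e^{\widetilde{\mathcal{V}}-\mathcal{V}}\mathds{1}_{E_{n,K}^{[s]}}\bigr]$ with $1$.
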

\begin{proof}
We will consider the decomposition  
    $$ \mathbb{E}_{n,\beta}^{[s]}\left[e^{\sum_{k=1}^n g(z_k)}\right] = \widetilde{\mathbb{E}}_{n,\beta}^{[s]}\left[e^{\widetilde{\mathcal{V}}_{g,h,\frac{1}{n}}^{[s]}(t)} \mathds{1}_{E_{n,K}^{[s]}}(t)\right] + \widetilde{\mathbb{E}}_{n,\beta}^{[s]}\left[e^{\widetilde{\mathcal{V}}_{g,h,\frac{1}{n}}^{[s]}(t)}\mathds{1}_{(E_{n,K}^{[s]})^c}(t)\right]. $$
It follows from \ref{est_D} and \ref{est_V_glob} that
$$ \widetilde{\mathbb{E}}_{n,\beta}^{[s]}\left[e^{\widetilde{\mathcal{V}}_{g,h,\frac{1}{n}}^{[s]}(t)}\mathds{1}_{(E_{n,K}^{[s]})^c}(t)\right] \leq e^{(c_3-c_2)n}. $$
Hence,
\begin{equation} \label{LS_s_ess_dom}
    \mathbb{E}_{n,\beta}^{[s]}\left[e^{\sum_{k=1}^n g(z_k)}\right] = \widetilde{\mathbb{E}}_{n,\beta}^{[s]}\left[e^{\widetilde{\mathcal{V}}_{g,h,\frac{1}{n}}^{[s]}(t)}\mathds{1}_{E_{n,K}^{[s]}}(t)\right] + o(1),\quad n\to\infty.
\end{equation}
On the other hand, assumptions \ref{est_V} and \ref{est_D} yield
\begin{align*}
    \left|\widetilde{\mathbb{E}}_{n,\beta}^{[s]}\left[e^{\widetilde{\mathcal{V}}_{g,h,\frac{1}{n}}^{[s]}(t)-\mathcal{V}_{g,h}^{[s]}}\mathds{1}_{E_{n,K}^{[s]}}(t)\right]-1\right| &\leq \widetilde{\mathbb{E}}_{n,\beta}^{[s]}\left[|e^{\widetilde{\mathcal{V}}_{g,h,\frac{1}{n}}^{[s]}(t)-\mathcal{V}_{g,h}^{[s]}}-1|\mathds{1}_{E_{n,K}^{[s]}}(t)\right] + \widetilde{\mathbb{E}}_{n,\beta}^{[s]}\left[\mathds{1}_{(E_{n,K}^{[s]})^c}(t)\right] \\
    & \leq \widetilde{\mathbb{E}}_{n,\beta}^{[s]}\left[|\widetilde{\mathcal{V}}_{g,h,\frac{1}{n}}^{[s]}(t)-\mathcal{V}_{g,h}^{[s]}|e^{|\widetilde{\mathcal{V}}_{g,h,\frac{1}{n}}^{[s]}(t)-\mathcal{V}_{g,h}^{[s]}|}\mathds{1}_{E_{n,K}^{[s]}}(t)\right] + e^{-c_2n} \\
    & \leq \widetilde{\mathbb{E}}_{n,\beta}^{[s]}\left[|\widetilde{\mathcal{V}}_{g,h,\frac{1}{n}}^{[s]}(t)-\mathcal{V}_{g,h}^{[s]}|\mathds{1}_{E_{n,K}^{[s]}}(t)\right] e^{c_1} + e^{-c_2n}.
\end{align*}
It then remains to use \ref{lim_E_V}.
\end{proof}

Based on \eqref{LS_s_ess_dom}, we will call
$$D_{n,K}^{[s]} = \{ (\tau(t_1),\dots,\tau(t_n))\in\tau^n : (t_1,\dots,t_n)\in E_{n,K}^{[s]}\}.$$
the essential domain of integration of $\mathbb{E}_{n,\beta}^{[s]}\left[e^{\sum_{k=1}^n g(z_k)}\right]$. Proposition \ref{LS_lim_s_proof}~\ref{est_V} indicates how small the set $D_{n,K}^{[s]}$ should be, as a priori we will only be able to show that Proposition \ref{LS_lim_s_proof}~\ref{est_V_glob} holds. However, as indicated by Proposition \ref{LS_lim_s_proof}~\ref{est_D} it shouldn't be too small, as e.g. for $E_{n,K}^{[s]}=\emptyset$, it is not satisfied. 
\medbreak

In Section \ref{S_der_V}, we will explain why the optimal choice for $h$ is $h=h_s$ with $h_s:\tau\to\R$ the solution of the master equation \eqref{ME} given $g:\tau\to\R$. A natural choice for $\widetilde{\mathcal{V}}_{g,h}^{[s]}\in\R$ in Proposition~\ref{lim_E_V} is then
\begin{equation} \label{V_form}
    \widetilde{\mathcal{V}}_{g,h}^{[s]} = \frac{1}{2} \int h_s g^{(1)} d\mu_{\tau} + (1-\frac{\beta}{2}) \int \left(h_s^{(1)} + s h_s (\log |z_e^{(1)}|)^{(1)}\right) d\mu_{\tau}.
\end{equation}
In Section \ref{S_EDI}, we will verify that Proposition \ref{LS_lim_s_proof}~\ref{est_V}--\ref{est_V_glob} are satisfied. To this end, it will be convenient to use the unit circle $\mathbb{T}$ as the reference object $\tau$ as a lot of helpful results are available there. Doing so, we will be able to prove the following theorem.

\begin{thm} \label{av_s}
Let $\gamma$ be a $C^{7,\alpha}$ Jordan curve and suppose that $V\in C^{7,\alpha}(\gamma)$ satisfies Assumption \ref{ass1}. Let $g\in C^{4,\alpha}(\mathbb{T})$ with $\int g d\mu_{\mathbb{T}}=0$ and let $h_s:\mathbb{T}\to\R$ be the solution of the master equation
\eqref{ME} given $g$. Then,
\begin{align} \label{int_ls}
    \lim_{n\to\infty} \log \mathbb{E}_{n,\beta}^{[s]}\left[e^{\sum_{k=1}^n g(z_k)}\right] = \frac{1}{2} \int h_s g^{(1)} d\mu_{\mathbb{T}} + (1-\frac{\beta}{2}) s \int h_s   (\log |z_e^{(1)}|)^{(1)} d\mu_{\mathbb{T}}.
\end{align}
\end{thm}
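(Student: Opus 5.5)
The plan is to apply Proposition \ref{LS_lim_s_proof} with $\tau=\mathbb{T}$, $h=h_s$ the solution of the master equation \eqref{ME}, and $\eps=1/n$. The first step is to expand the discrepancy \eqref{V_ini} in $\eps$. The linear term in $\eps$ of the Hamiltonian difference is
\[
-\frac{\beta}{2}\eps\sum_{k\ne l}\bigl(h_s(z_k)\mathcal{L}_s^{(1,0)}(z_k,z_l)+h_s(z_l)\mathcal{L}_s^{(0,1)}(z_k,z_l)\bigr) -\frac{\beta}{2}\eps s\sum_k h_s(Vz_e)^{(1)}(z_k).
\]
Writing the empirical measure as $n\mu_{\mathbb{T}}+\nu_n$, the term of order $n^2\eps$ is $-\beta n\sum_k\int h_s(v)\mathcal{L}_s^{(0,1)}(z_k,v)\,d\mu_{\mathbb{T}}(v)$ plus the contribution from $V$. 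By the master equation this equals $-n\sum_k g(z_k)$ up to these $V$-terms. The $V$-terms cancel because the Euler--Lagrange equation (Proposition \ref{L_1_exp}) makes $\int\mathcal{L}_s(u,v)\,d\mu_{\mathbb{T}}(v)+\frac{s}{2}(Vz_e)(u)$ constant. Hence the $O(n)$ fluctuation terms cancel against the $\widetilde g(t_k)$ term after shifting. This is exactly why $h_s$ is chosen.

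What remains splits into four parts:
\begin{itemize}
\item[(a)] a quadratic fluctuation term $\eps\iint h_s(u)\mathcal{L}_s^{(1,0)}(u,v)\,d\nu_n(u)\,d\nu_n(v)$, with the diagonal suitably treated;
\item[(b)] second-order Taylor terms, of size $\eps^2 n^2=O(1)$, which converge to $\frac12\int h_s g^{(1)}\,d\mu_{\mathbb{T}}$ after again using \eqref{ME} and \eqref{IBP};
\item[(c)] $\sum_k\eps\bigl(h_s'+s\,h_s(\log|z_e^{(1)}|)'\bigr)(t_k)+O(1/n)$, which converges to $\int(h_s^{(1)}+s\,h_s(\log|z_e^{(1)}|)^{(1)})\,d\mu_{\mathbb{T}}$;
\item[(d)] the diagonal correction from symmetrizing the singular kernel, which produces the factor $-\frac{\beta}{2}$ multiplying the same $h_s^{(1)}$ integral.
\end{itemize}
Together these give \eqref{V_form}. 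Since $\int h_s^{(1)}\,d\mu_{\mathbb{T}}=0$ on $\mathbb{T}$, \eqref{V_form} reduces to \eqref{int_ls}.

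For the difference quotient $\frac{h_s(u)-h_s(v)}{u-v}$-type kernel coming from $\mathcal{L}_0$, I will use the $\mathbb{T}$-specific structure: the Fourier basis $\psi$ together with $\tilde D$. The smooth Grunsky part $s\,\mathcal{G}$ is handled via the decay in Proposition \ref{reg_G}. Regularity of $h_s$ (Proposition \ref{reg_h_s} with $m=4$, needing $z_e\in C^{7,\alpha}$, which Proposition \ref{ep_reg} supplies) gives enough derivatives to control every Taylor remainder uniformly.

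Next I choose the essential domain $E_{n,K}^{[s]}$ as the set where a discrete $H^{-1/2}$-type norm of the fluctuation $\nu_n$ is bounded. Concretely, I take $\sum_{k}|\hat\nu_{n,k}|^2/|k|\le Kn$ together with a mild separation condition, so that consecutive gaps are at least $e^{-Kn}$ away from degeneracy. On this set, term (a) converges in $L^1$ to $0$ and all terms are bounded, giving \ref{lim_E_V} and \ref{est_V}. Condition \ref{est_V_glob} follows from the Lipschitz bounds on $h_s$ and $g$ and from $\log(1+\eps h_s')=O(\eps)$. The difference quotient of $\log|z_k-z_l|$ along the flow is bounded by $\|h_s'\|_\infty$, so the Hamiltonian difference is $O(n)$.

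The main obstacle is \ref{est_D}: showing that the interpolating gas, with kernel $\mathcal{L}_0+s\mathcal{G}$, has exponentially small probability outside $E_{n,K}^{[s]}$, with rate $c_2$ exceeding $c_3$ (enlarging $K$ helps). My first attempt is a partition-function comparison. I write $\mathcal{H}_n^{[s]}$ via \eqref{L_s_exp} as $n^2$ times a constant plus $\sum_{k\ne l}\psi(z_k)L_s\psi(z_l)^t$. By the strengthened Grunsky inequality (Propositions \ref{SGI_UC} and \ref{L_s_inv}), $L_s\ge(1-\kappa)I$, so the energy is at least $(1-\kappa)\|\sum_k\psi(z_k)\|^2$ minus the diagonal self-energy. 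The diagonal is controlled through the explicit difference between $\log$ and Grunsky kernels (the paper's forthcoming quantitative comparison) and through a lower bound on $Z_{n,\beta}^{[s]}$ obtained by Jensen against the circular ensemble. Combining these yields $\widetilde{\mathbb{P}}\bigl(\|\sum\psi(z_k)\|^2>Kn\bigr)\le e^{-cKn}$, which is \ref{est_D}.
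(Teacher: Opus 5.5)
Your overall architecture matches the paper's: the change of variables $t_k\mapsto t_k+\frac1n\widetilde{h}_s(t_k)$, cancellation of the order-$n$ linear term through the master equation, uniform Taylor remainders, then Proposition~\ref{LS_lim_s_proof}. However, condition~\ref{est_D}, which carries the real weight, fails as you set it up.

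\textbf{The essential domain is empty.} The quantity $\sum_k|\hat\nu_{n,k}|^2/|k|$ is $+\infty$ for \emph{every} configuration. For $k\neq 0$ one has $|\hat\nu_{n,k}|=|\sum_u z_u^k|$, and $\frac1M\sum_{k=1}^M|\sum_u z_u^k|^2\to n+\#\{u\neq v: z_u=z_v\}\geq n$. So the series diverges like $n\log M$: point masses have infinite logarithmic self-energy. Hence your $E_{n,K}^{[s]}$ is empty and its complement has probability one. For the same reason, ``energy $\geq(1-\kappa)\|\sum_k\psi(z_k)\|^2$ minus the diagonal self-energy'' is $\infty-\infty$. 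The strengthened Grunsky inequality cannot be applied to $(\sum_u\psi_k(z_u))_k$, which is not in $\ell^2$. A Jensen lower bound on $Z_{n,\beta}^{[s]}$ against the circular ensemble is a plausible substitute for Proposition~\ref{PF_LB}. What is missing is the matching lower bound on the energy outside the essential domain.

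That bound needs a renormalization plus a truncated Grunsky inequality. The paper defines $E_{n,K}^{[s]}$ by the energy relative to the equally spaced points $\xi^{(n)}$ and proves Proposition~\ref{G_lb} as follows. It truncates the Grunsky form at $N\asymp n$ and applies $K^{\gamma,V}_{\mathbb{T}}\geq-\kappa I$ to the truncated vector. It controls the tail by the decay of the Grunsky coefficients. It then dominates $\kappa^{1/2}\sum_{|k|\leq N}f_k^2$ by the damped kernel $-\log|1-\kappa^{1/(2N)}e^{i(\theta_u-\theta_v)}|$. The diagonal contribution of that kernel, $-n\log(1-\kappa^{1/(2N)})=n\log n+O(n)$, cancels $\sum_{u\neq v}\mathcal{L}_0(\xi^{(n)}_u,\xi^{(n)}_v)=-n\log n$. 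Together with Proposition~\ref{L_lb} this gives $\sum_u t_u^2=O(1)$ on $E_{n,K}^{[s]}$, hence $|\int f\,d\lambda_{z^{(n)}}|\lesssim\|f^{(1)}\|_\infty\sqrt{n}$ there. You explicitly defer exactly this comparison, so \ref{est_D} is not established.

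\textbf{Condition~\ref{lim_E_V} is not justified.} Your assertion that term (a) converges to $0$ in $L^1$ on the essential domain is unsupported. Uniform control on a set whose complement is only exponentially small can only give fluctuations $\int f\,d\nu_n$ of order $\sqrt{n}$. So $\frac1n\iint h_s(u)\mathcal{L}_s^{(1,0)}(u,v)\,d\nu_n(u)\,d\nu_n(v)$ is only $O(1)$, which is condition~\ref{est_V}, not~\ref{lim_E_V}. The paper closes this with a bootstrap:
\begin{itemize}
\item Conditions~\ref{est_V}--\ref{est_V_glob} already give $\sup_{n,s}\mathbb{E}_{n,\beta}^{[s]}[e^{\pm\sum_k f(z_k)}]<\infty$ (Proposition~\ref{s_LS_const}).
\item Hence $\mathbb{E}_{n,\beta}^{[s]}|\int f\,d\lambda_{z^{(n)}}|=O(1)$ for each fixed smooth $f$.
\item Split the Fourier expansions of $[h_s\partial_1+h_s\partial_2]\mathcal{L}_0$ and $[h_s\partial_1+h_s\partial_2]\mathcal{G}$ into finitely many low modes and a tail. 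In the low modes one factor is $O(1)$ in expectation and the other $O(\sqrt{n})$ uniformly. The tail is made small by the decay of $\hat h_{s,k}$ and $G_{k,l}$. This gives \eqref{err_0}.
\end{itemize}

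\textbf{A smaller bookkeeping point.} It is $\int\mathcal{L}_s(u,v)\,d\mu_{\mathbb{T}}(v)$ itself that is constant, not that integral plus $\frac s2(Vz_e)(u)$, since $\mathcal{L}_1$ already contains $\frac12((Vz_e)(u)+(Vz_e)(v))$. The explicit $V$-term of $\mathcal{H}_n^{[s]}$ cancels algebraically against the diagonal value $\mathcal{G}(u,u)=-\log|z_e^{(1)}(u)|+(Vz_e)(u)$. Also, the diagonal correction in (d) multiplies the whole integrand $h_s^{(1)}+s\,h_s(\log|z_e^{(1)}|)^{(1)}$ by $-\frac{\beta}{2}$, not only $h_s^{(1)}$.
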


This partial result will be essential in the proof of Theorem \ref{PF} \& \ref{ThmSz} in Section \ref{proofs_mr}. Another important element in these proofs are the different representations of the terms on the right hand side of \eqref{int_ls} in terms of the Fourier basis and Dirichlet inner product, which will be proven in Section \ref{S_RV}.

\subsection{Proof of Theorem \ref{av_s}}

\subsubsection{Derivation of the limit} \label{S_der_V}

The interplay between the precise change of variables and the structure of the interpolating $\log$-kernel \eqref{L_s:G} allows us to slightly simplify the first term of \eqref{V_ini}. To do so, we note that
\begin{align*}
    &\widetilde{\mathcal{L}}_0(u+\eps\widetilde{h}(u),v+\eps\widetilde{h}(v))-\widetilde{\mathcal{L}}_0(u,v) = \log \left| \frac{\tau(u)-\tau(v)}{\tau(u+\eps\widetilde{h}(u))-\tau(v+\eps\widetilde{h}(v))} \right|, \\
    &\widetilde{\mathcal{G}}(u,v) =  \log\left|\frac{\tau(u)-\tau(v)}{\widetilde{z_e}(u)-\widetilde{z_e}(v)}\right| + \frac{1}{2} (V\widetilde{z_e}(u)+V\widetilde{z_e}(v)),
\end{align*}
can be extended to the diagonal as
\begin{align*}
    &\lim_{v\to u,v\in\tau} (\widetilde{\mathcal{L}}_0(u+\eps \widetilde{h}(u),v+\eps \widetilde{h}(v))-\widetilde{\mathcal{L}}_s(u,v)) = - \log|1+\eps \widetilde{h}'(u)|, \\
    &\lim_{v\to u,v\in\tau} \widetilde{\mathcal{G}}(u,v) = - \log|\widetilde{z_e}'(u)|+(V\widetilde{z_e})(u),
\end{align*}
using the fact that $|\tau'|=1$. In that case, as
    $$ \widetilde{\mathcal{H}}_n^{[s]}(t) = \sum_{1\leq k\neq l\leq n} \widetilde{\mathcal{L}}_s(t_k,t_l) + s \sum_{k=1}^n (V\widetilde{z_e})(t_k),$$
we obtain
\begin{align*}
    &\widetilde{\mathcal{H}}_n^{[s]}(t_1+\eps \widetilde{h}(t_1),\dots,t_n+\eps \widetilde{h}(t_n))-\widetilde{\mathcal{H}}_n^{[s]}(t)  \\
    &= \sum_{k,l=1}^n \left(\widetilde{\mathcal{L}}_s(t_k+\eps\widetilde{h}(t_k),t_l+\eps\widetilde{h}(t_l))-\widetilde{\mathcal{L}}_s(t_k,t_l)\right) \\
    &\quad + \sum_{k=1}^n \left(\log|1+\eps \widetilde{h}'(t_k)|+s(\log|\widetilde{z_e}'(t_k+\eps \widetilde{h}(t_k))|-\log|\widetilde{z_e}'(t_k)|)\right).
\end{align*}
As a consequence, it follows from \eqref{V_ini} that
$$ \widetilde{\mathcal{V}}_{g,h,\eps}(t) = \widetilde{\mathcal{W}}_1(\eps;t) + \widetilde{\mathcal{W}}_2(\eps;t), $$
with
\begin{align*}
    \widetilde{\mathcal{W}}_1(\eps;t) &= - \frac{\beta}{2} \sum_{k,l=1}^n \left(\widetilde{\mathcal{L}}_s(t_k+\eps \widetilde{h}(t_k),t_l+\eps \widetilde{h}(t_l)) - \widetilde{\mathcal{L}}_s(t_k,t_l)\right), \\
    \widetilde{\mathcal{W}}_2(\eps;t) &= (1-\frac{\beta}{2}) \sum_{k=1}^n \left(\log(1+\eps \widetilde{h}'(t_k)) + s (\log |\widetilde{z_e}'(t_k+\eps \widetilde{h}(t_k))|-\log |\widetilde{z_e}'(t_k)|) \right) \\
    &\quad + \sum_{k=1}^n \widetilde{g}(t_k+\eps \widetilde{h}(t_k)) 
 \end{align*}
As such $\widetilde{\mathcal{V}}_{g,h,\eps}(t)$ can be represented as a sum of a double sum and a single sum. We will now apply Taylor's theorem to write 
\begin{align*}
    \widetilde{\mathcal{W}}_1(\eps) &= \widetilde{\mathcal{W}}_1'(0)\eps + \widetilde{\mathcal{W}}_1''(0) \frac{\eps^2}{2} + \frac{\eps^3}{2} \int_0^1 \widetilde{\mathcal{W}}_1'''(a\eps) (1-a)^2 da, \\
    \widetilde{\mathcal{W}}_2(\eps) &=  \widetilde{\mathcal{W}}_2(0) + \widetilde{\mathcal{W}}_2'(0)\eps +  \eps^2 \int_0^1 \widetilde{\mathcal{W}}_2''(a\eps) (1-a) da.
\end{align*}

Take $\eps=1/n$. We will proceed under the assumption that, uniformly for $t\in [a_\tau,b_\tau]^n$ as $n\to\infty$,
\begin{align}
    \widetilde{\mathcal{W}}_1(\eps) &= \frac{1}{n}\widetilde{\mathcal{W}}_1'(0) + \frac{1}{2n^2}\widetilde{\mathcal{W}}_1''(0) + o(1), \label{W_1_ass} \\
    \widetilde{\mathcal{W}}_2(\eps) &=  \widetilde{\mathcal{W}}_2(0) + \frac{1}{n} \widetilde{\mathcal{W}}_2'(0) + o(1). \label{W_2_ass}
\end{align}

In a sufficiently regular framework on the unit circle $\tau=\mathbb{T}$, this can definitely be guaranteed as shown in the following result.

\begin{prop} \label{Taylor_err}
    Suppose that $\gamma$ is a $C^{4,\alpha}$ Jordan curve and that $V\in C^{4,\alpha}(\gamma)$ satisfies Assumption \ref{ass1}. If $g\in C^{2,\alpha}(\mathbb{T})$ and
    $h\in C^{1,\alpha}(\mathbb{T})$, then there exist $n_0(\|h\|_{1,\alpha})\in\N$ and $C(\|g\|_{2,\alpha},\|h\|_{1,\alpha})>0$ such that for all $n\geq n_0$,
    \begin{align*}
        &\sup_{s\in[0,1]}
        \sup_{t^{(n)}\in[0,2\pi]^n}
        \sup_{a\in[0,1]}
        \left|
        \frac{1}{n^2}
        \widetilde{\mathcal{W}}_1'''
        \left(\frac{a}{n};t^{(n)}\right)
        \right|
        \leq
        C(\|g\|_{2,\alpha},\|h\|_{1,\alpha}),
        \\
        &\sup_{s\in[0,1]}
        \sup_{t^{(n)}\in[0,2\pi]^n}
        \sup_{a\in[0,1]}
        \left|
        \frac{1}{n}
        \widetilde{\mathcal{W}}_2''
        \left(\frac{a}{n};t^{(n)}\right)
        \right|
        \leq
        C(\|g\|_{2,\alpha},\|h\|_{1,\alpha}).
    \end{align*}
\end{prop}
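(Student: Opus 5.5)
The plan is to compute the $\eps$-derivatives of $\widetilde{\mathcal{W}}_1$ and $\widetilde{\mathcal{W}}_2$ explicitly and bound them pointwise, uniformly in $t^{(n)}$ and $s$. On $\tau=\mathbb{T}$ we have $\tau(t)=e^{it}$, so with $\eps\in[0,1/n]$ we write $x_k=t_k+\eps\widetilde{h}(t_k)$. The hypotheses on $\gamma$ and $V$ feed in through Proposition \ref{ep_reg}, which gives $z_e\in C^{4,\alpha}(\mathbb{T})$ and $|z_e^{(1)}|\geq c>0$.

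The single sum $\widetilde{\mathcal{W}}_2$ is handled first and is routine. Differentiating twice in $\eps$ gives
\[
\widetilde{\mathcal{W}}_2''(\eps)=\sum_{k=1}^n\Big[-(1-\tfrac{\beta}{2})\frac{\widetilde{h}'(t_k)^2}{(1+\eps\widetilde{h}'(t_k))^2}+(1-\tfrac{\beta}{2})s\,(\log|\widetilde{z_e}'|)''(x_k)\,\widetilde{h}(t_k)^2+\widetilde{g}''(x_k)\widetilde{h}(t_k)^2\Big].
\]
We choose $n_0$ so that $\|h\|_{1}/n_0\leq 1/2$; then $1+\eps\widetilde{h}'\geq 1/2$. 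Since $\log|z_e^{(1)}|\in C^{2}$ (because $z_e\in C^{3}$ and $|z_e^{(1)}|\geq c$) and $g\in C^2$, each summand is $O(1)$. Hence $\frac1n|\widetilde{\mathcal{W}}_2''|\leq C$.

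For $\widetilde{\mathcal{W}}_1$ we use \eqref{L_s:G}, which gives $\widetilde{\mathcal{L}}_s=\widetilde{\mathcal{L}}_0+s\widetilde{\mathcal{G}}$, and treat the two pieces separately. Each is a double sum over all $k,l$, including the diagonal, defined by continuous extension. Its third derivative is
\[
\sum_{k,l}\sum_{i+j=3}\binom{3}{i}\partial_u^i\partial_v^j F(x_k,x_l)\,\widetilde{h}(t_k)^i\widetilde{h}(t_l)^j .
\]
So it suffices to show that all third-order partial derivatives of the relevant kernel, evaluated along the deformed points, are bounded uniformly. That yields $|\widetilde{\mathcal{W}}_1'''|\leq Cn^2$.

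For $\widetilde{\mathcal{G}}$ this follows from the integral representation in the proof of Proposition \ref{reg_G}. That representation shows that $\mathcal{G}$ is as smooth as $z_e^{(1)}$ and $Vz_e$ allow, and its nonvanishing follows from $|z_e^{(1)}|\geq c$ together with injectivity of $z_e$. With $z_e\in C^{4,\alpha}$ and $V\in C^{4,\alpha}$, we get $\mathcal{G}\in C^{3,\alpha}(\mathbb{T}^2)$, which is exactly what is needed for bounded third derivatives.

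The main obstacle is the $\widetilde{\mathcal{L}}_0$ piece, because $\log|e^{iu}-e^{iv}|^{-1}$ is singular on the diagonal. Its derivatives in $u,v$ separately are not bounded, so the naive approach fails. Instead we do not differentiate the kernel separately but the combined difference
\[
\log\left|\frac{e^{it_k}-e^{it_l}}{e^{ix_k}-e^{ix_l}}\right|
=-\log\left|\frac{\sin\frac{x_k-x_l}{2}}{\sin\frac{t_k-t_l}{2}}\right|.
\]
Write $x_k-x_l=(t_k-t_l)\,(1+\eps\,Q_{kl})$ with $Q_{kl}=\int_0^1\widetilde{h}'(t_l+a(t_k-t_l))\,da$. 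This is a function of $\eps$ that is bounded, Lipschitz in $\eps$ and independent of $\eps$-derivatives. Here we use the periodic version, taking $t_k-t_l$ mod $2\pi$ in $(-\pi,\pi]$; the H\"older regularity of $h'$ enters only through boundedness of $Q_{kl}$. The quotient $\sin(\theta(1+\eps Q))/\sin\theta$ is then a smooth function of $(\eps,\theta,Q)$ with $\theta\in[-\pi,\pi]$, and it stays bounded away from zero for $\eps\|h'\|\leq 1/2$. This needs a separate check near $\theta=\pm\pi$, where one may use the symmetric representative $x_k-x_l$ mod $2\pi$. Consequently its logarithm has $\eps$-derivatives up to order three bounded by a constant depending on $\|h\|_{1,\alpha}$, uniformly in $k,l$ and in $t^{(n)}$. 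Summing the $n^2$ terms gives $\frac{1}{n^2}|\widetilde{\mathcal{W}}_1'''|\leq C$. This also explains why only $h\in C^{1,\alpha}$, not $C^3$, is required. Combining the bounds for the two pieces of $\widetilde{\mathcal{W}}_1$ and for $\widetilde{\mathcal{W}}_2$ yields the statement, with constants uniform in $s\in[0,1]$ since $s$ enters linearly.
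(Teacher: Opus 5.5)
Your proposal is correct and follows essentially the paper's route: explicit $\eps$-derivatives of $\widetilde{\mathcal{W}}_2$ and $\widetilde{\mathcal{W}}_1$, the regularity $\widetilde{\mathcal{G}}\in C^{3,\alpha}$ for the smooth part, and, for $\widetilde{\mathcal{L}}_0$, working with the combined difference $\log\bigl|\sin\frac{u-v}{2}\big/\sin\frac{T_\eps(u)-T_\eps(v)}{2}\bigr|$ with $T_\eps(t)=t+\eps\widetilde{h}(t)$. The paper bounds the third $\eps$-derivative of this difference directly via $|\sin\frac{T_\eps(u)-T_\eps(v)}{2}|\geq d(u,v)/(2\pi)$ and $|\widetilde{h}(u)-\widetilde{h}(v)|\leq\|h'\|_\infty d(u,v)$, which is equivalent to your factorization $x_k-x_l=\delta(1+\eps Q_{kl})$ modulo $2\pi$; your separate check near $\theta=\pm\pi$ is unnecessary, because with the half-angle $\delta/2\in(-\pi/2,\pi/2]$ and $1+\eps Q_{kl}\in[\frac12,\frac32]$ the sine argument stays in $(-\frac{3\pi}{4},\frac{3\pi}{4}]$, so the quotient is automatically bounded away from zero.
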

\begin{proof}
Under the stated conditions, we know from Proposition \ref{ep_reg} that $z_e\in C^{4,\alpha}(\mathbb{T})$ and $\inf_{\mathbb{T}}|z_e^{(1)}|\,>0$. Denote
$ T_\eps(t)=t+\eps\widetilde{h}(t)$ and suppose that $n\in\N$ is sufficiently large so that
    $$
    \frac{1}{2}
    \leq T_\eps'(t)=1+\eps\widetilde{h}'(t)
    \leq\frac{3}{2}.
    $$
We will first focus on $\widetilde{\mathcal{W}}_1$. Recall that $\widetilde{\mathcal{L}}_s
=\widetilde{\mathcal{L}}_0+s\widetilde{\mathcal{G}}$. For the contribution of $\widetilde{\mathcal{L}}_0$, suppose that $u-v\not\in 2\pi\Z$ and define
\begin{align*}
    Q_\eps(u,v)
    =
    \widetilde{\mathcal{L}}_0(T_\eps(u),T_\eps(v))
    -\widetilde{\mathcal{L}}_0(u,v)=
    \log\left|
    \frac{\sin\big(\frac{u-v}{2}\big)}
    {\sin\big(\frac{u-v+\eps(\widetilde{h}(u)
                     -\widetilde{h}(v))}{2}\big)}
    \right|.
\end{align*}
Its third derivative to $\eps$ can be written as
    $$
    \partial_\eps^3 Q_\eps(u,v)
    =
    -\frac{1}{4}
    \frac{\cos\big(\frac{T_\eps(u)-T_\eps(v)}{2}\big)}
         {\sin^3\big(\frac{T_\eps(u)-T_\eps(v)}{2}\big)} (\widetilde{h}(u)-\widetilde{h}(v))^3.
    $$
It will be convenient to introduce $ d(u,v)=\min_{j\in\Z}|u-v+2\pi j|$. The estimates
\begin{align*}
    \left|
    \sin\left(\frac{T_\eps(u)-T_\eps(v)}{2}\right)
    \right|
    =
    \sin\left(
    \frac{d(T_\eps(u),T_\eps(v))}{2}
    \right)
    \geq
    \frac{1}{\pi}d(T_\eps(u),T_\eps(v))
    \geq \frac{1}{2\pi}d(u,v),
\end{align*}
then show that
    $$
    |\partial_\eps^3 Q_\eps(u,v)|
    \leq 2\pi^3\|h\|_{1,\alpha}^3.
    $$
On the diagonal, the continuous extension of $Q_\eps$ is
    $$
    Q_\eps(u,u)
    =-\log(1+\eps\widetilde{h}'(u)),
    $$
and its third derivative to $\eps$ is
    $$
    \partial_\eps^3 Q_\eps(u,u)
    =
    -\frac{2\widetilde{h}'(u)^3}
    {(1+\eps\widetilde{h}'(u))^3}.
    $$
This agrees with the limit of the off-diagonal expression and satisfies the same uniform bound. For the contribution of $\widetilde{\mathcal{G}}$, we first observe that $\widetilde{\mathcal{G}}\in C^{3,\alpha}$ as $z_e\in C^{4,\alpha}(\mathbb{T})$, $V\in C^{3,\alpha}(\gamma)$ and $\inf_{\mathbb{T}}|z_e^{(1)}|\,>0$. As a consequence, there exists $C_1>0$ such that
    $$
    \left|
    \partial_\eps^3
    \big[
    \widetilde{\mathcal{G}}(T_\eps(u),T_\eps(v))
    \big]
    \right|
    \leq C_1\|h\|_\infty^3.
    $$
We can therefore conclude that there exists $C_2>0$ such that for all $s\in[0,1]$, $t^{(n)}\in[0,2\pi]^n$ and $0\leq\eps\leq1/n_0$,
\begin{align*}
    \left|
    \widetilde{\mathcal{W}}_1'''(\eps;t^{(n)})
    \right|
    \leq
    \frac{\beta}{2}\sum_{k,l=1}^n
    \left(
    2\pi^3\|h\|_{1,\alpha}^3+C_1\|h\|_\infty^3
    \right)
    \leq n^2 C_2(\|h\|_{1,\alpha}).
\end{align*}
We will now consider $\widetilde{\mathcal{W}}_2(\eps;t^{(n)})$. Its second derivative to $\eps$ is
\begin{align*}
    \widetilde{\mathcal{W}}_2''(\eps;t^{(n)})
    =
    &\sum_{k=1}^n
    \widetilde{h}(t_k)^2
    \widetilde{g}''(T_\eps(t_k)) \\
    &+
    (1-\frac{\beta}{2})
    \sum_{k=1}^n
    \left(
    s\widetilde{h}(t_k)^2(\log|\widetilde{z_e}'|)''(T_\eps(t_k))
    -
    \frac{\widetilde{h}'(t_k)^2}
    {(1+\eps\widetilde{h}'(t_k))^2}
    \right).
\end{align*}
It then follows from $1+\eps\widetilde{h}'=T_\eps'\geq 1/2$ that there exists 
$C_3(\|g\|_{2,\alpha},\|h\|_{1,\alpha})>0$ such that for all $s\in[0,1]$, $t^{(n)}\in[0,2\pi]^n$ and $0\leq\eps\leq1/n_0$,
    $$
    \left|
    \widetilde{\mathcal{W}}_2''(\eps;t^{(n)})
    \right|
    \leq
    n C_3(\|g\|_{2,\alpha},\|h\|_{1,\alpha}),
    $$
Taking $\eps=a/n$ with $n\geq n_0$ in both estimates then finishes the proof.
\end{proof}

At this point, we can move back to the complex variable using \eqref{tan_der}. Evidently,
    $$ \mathcal{W}_2(0) = \sum_{k=1}^n g(z_k). $$
Defining
    $$ m = (1-\frac{\beta}{2}) (h^{(1)} + s h (\log|z_e^{(1)}|)^{(1)}),$$
gives
$$\mathcal{W}_2'(0) = \sum_{k=1}^n (h(z_k) g^{(1)}(z_k) + m(z_k)).$$
Lastly,
$$\mathcal{W}_1^{(j)}(0) = - \frac{\beta}{2} \sum_{k,l=1}^n [h(z_k) \partial_1 + h(z_l)\partial_2]^{j} \mathcal{L}_s(z_k,z_l), \quad j\in\{1,2\}.$$
Given $z=(z_1,\dots,z_n)\in \tau^n$, we will now write each of these contributions in terms of the empirical measure
    $$ \mu_{z^{(n)}} = \frac{1}{n} \sum_{k=1}^n \delta_{z_k^{(n)}}. $$
For $\mathcal{W}_1''(0)$ and $\mathcal{W}_2'(0)$, this gives 
\begin{align*}
    \mathcal{W}_1''(0) = &- \frac{\beta}{2} n^2 \iint [h(u) \partial_u +  h(v)\partial_v]^2 \mathcal{L}_s(u,v) d\mu_{z^{(n)}}(u)d\mu_{z^{(n)}}(v), \\
    \mathcal{W}_2'(0) =  & n \int (h g^{(1)} + m) d\mu_{z^{(n)}}.
\end{align*}
For the other contributions, we will further compare $\mu_{z^{(n)}}$ with the equilibrium measure $\mu_\tau$ on $\tau$ via the signed measure
    $$ \lambda_{z^{(n)}} = n (\mu_{z^{(n)}} - \mu_\tau),$$
which measures the fluctuations of $\mu_{z^{(n)}}$ around $\mu_\tau$. Since 
$$d\mu_{z^{(n)}}=d\mu_{\tau}+\frac{1}{n} d\lambda_{z^{(n)}},$$
and $\int g d\mu_\tau =0$, for $\mathcal{W}_2(0)$, this gives
$$\mathcal{W}_2(0) = n \int g d\mu_{z^{(n)}} = \int g d\lambda_{z^{(n)}}.$$
Since 
\begin{align*}
    d\mu_{z^{(n)}}(u)d\mu_{z^{(n)}}(v) &= d\mu_{\tau}(u)d\mu_{\tau}(v) + \frac{1}{n}(d\lambda_{z^{(n)}}(u)d\mu_{\tau}(v)+d\mu_{\tau}(u)d\lambda_{z^{(n)}}(v)) \\
    &\quad + \frac{1}{n^2} d\lambda_{z^{(n)}}(u)d\lambda_{z^{(n)}}(v),
\end{align*}    
for $\mathcal{W}_1'(0)$, this gives
\begin{align*}
    \mathcal{W}_1'(0) = &- \frac{\beta}{2} n^2 \iint [h(u) \partial_u +  h(v)\partial_v] \mathcal{L}_s(u,v) d\mu_{\tau}(u)d\mu_{\tau}(v) \\
    &- \beta n \iint [h(u) \partial_u +  h(v)\partial_v] \mathcal{L}_s(u,v) d\mu_{\tau}(u) d\lambda_{z^{(n)}}(v) \\
    & - \frac{\beta}{2} \iint [h(u) \partial_u +  h(v)\partial_v] \mathcal{L}_s(u,v) d\lambda_{z^{(n)}}(u)d\lambda_{z^{(n)}}(v).
\end{align*}
The fact that $\mathcal{L}_s$ admits an expansion of the form
    $$ \mathcal{L}_s(u,v) = c_s + \phi(u) L_s \phi(v)^t,$$
see \eqref{L_s_exp}, the latter can be simplified substantially, namely
\begin{align*}
    \mathcal{W}_1'(0) = &- \beta n \iint h(u)\mathcal{L}_s^{(1,0)}(u,v) d\mu_{\tau}(u) d\lambda_{z^{(n)}}(v), \\
    & - \frac{\beta}{2} \iint [h(u) \partial_u +  h(v)\partial_v] \mathcal{L}_s(u,v) d\lambda_{z^{(n)}}(u)d\lambda_{z^{(n)}}(v).
\end{align*}

In order to cancel some contributions of $d\lambda_{z^{(n)}}$, it is then convenient to take $h=h_s$ with $h_s$ such that 
    $$ g(v) = \beta \int h_s(u)\mathcal{L}_s^{(1,0)}(u,v) d\mu_{\tau}(u).  $$
We will denote $m_s=m$ for this choice of $h$. In that case, uniformly for $z^{(n)}\in\tau^n$ as $n\to\infty$,
\begin{align*}
    \mathcal{V}_{g,h_s,\frac{1}{n}}(z^{(n)}) = &- \frac{\beta}{2n} \iint [h_s(u) \partial_u +  h_s(v)\partial_v] \mathcal{L}_s(u,v) d\lambda_{z^{(n)}}(u)d\lambda_{z^{(n)}}(v) \\
     & - \frac{\beta}{4} \iint [h_s(u) \partial_u +  h_s(v)\partial_v]^2 \mathcal{L}_s(u,v) d\mu_{z^{(n)}}(u)d\mu_{z^{(n)}}(v) \\
     &+ \int  (h_s g^{(1)} + m_s) d\mu_{z^{(n)}} + o(1).
\end{align*}

As a consequence, whenever $z_e\in C^{4,\alpha}(\tau)$, $V\in C^{3,\alpha}(\gamma) $, $g\in C^{2,\alpha}(\gamma)$ and $h_s\in C^{1,\alpha}(\gamma)$, there exists $c_3(\|g\|_{2,\alpha},\|h_s\|_{1,\alpha})>0$ such that
\begin{equation} \label{est_V_glob_uni}
    \sup_{z^{(n)}\in \tau^n} | \widetilde{\mathcal{V}}_{g,h,\frac{1}{n}}^{[s]}(z^{(n)})| \leq c_3(\|g\|_{2,\alpha},\|h_s\|_{1,\alpha}) n.
\end{equation}
This already shows that Proposition \ref{LS_lim_s_proof}~\ref{est_V_glob} is satisfied under these regularity assumptions.
\medbreak

We will proceed under the assumption that the essential domain of integration $D_{n,K}^{[s]}$ is such that
\begin{equation} \label{EDI_WC}
    \lim_{n\to\infty} \sup_{z^{(n)}\in D_{n,K}^{[s]}} \left| \int f d\mu_{z^{(n)}} - \int f d\mu_\tau \right| = 0.
\end{equation}
We will show that in a sufficiently regular framework on the unit circle $\tau=\mathbb{T}$, this can definitely be guaranteed.
\medbreak

Under this assumption, uniformly for $z^{(n)}\in D_{n,K}^{[s]}$ as $n\to\infty$,
\begin{align*}
    \mathcal{V}_{g,h_s,\frac{1}{n}}(z^{(n)}) = &- \frac{\beta}{2n} \iint [h_s(u) \partial_u +  h_s(v)\partial_v] \mathcal{L}_s(u,v) d\lambda_{z^{(n)}}(u)d\lambda_{z^{(n)}}(v) \\
     & - \frac{\beta}{4} \iint [h_s(u) \partial_u +  h_s(v)\partial_v]^2 \mathcal{L}_s(u,v) d\mu_{\tau}(u)d\mu_{\tau}(v) \\
     &+ \int  (h_s g^{(1)} + m_s) d\mu_{\tau} + o(1).
\end{align*}
Using the expansion \eqref{L_s_exp} of $\mathcal{L}_s$ and the master equation, the second term can be further simplified to
$$- \frac{1}{2} \int h_s g^{(1)} d\mu_{\tau}. $$
Consequently, uniformly for $z^{(n)}\in D_{n,K}^{[s]}$ as $n\to\infty$,
\begin{align} \label{V_form}
    \mathcal{V}_{g,h_s,\frac{1}{n}}(z^{(n)}) = & - \frac{\beta}{2n} \iint [h_s(u) \partial_u +  h_s(v)\partial_v] \mathcal{L}_s(u,v) d\lambda_{z^{(n)}}(u)d\lambda_{z^{(n)}}(v) \nonumber \\
    &+ \int  (\frac{1}{2}h_s g^{(1)} + m_s) d\mu_{\tau} + o(1),
\end{align}
and thus, uniformly for $z^{(n)}\in D_{n,K}^{[s]}$ as $n\to\infty$,
    $$ \mathcal{V}_{g,h_s,\frac{1}{n}}(z^{(n)}) = \mathcal{V}_{g,h_s} - \frac{\beta}{2n} \iint [h_s(u) \partial_u +  h_s(v)\partial_v] \mathcal{L}_s(u,v) d\lambda_{z^{(n)}}(u)d\lambda_{z^{(n)}}(v) + o(1). $$
We therefore conclude that Proposition~\ref{lim_E_V} is satisfies whenever \eqref{W_1_ass}, \eqref{W_2_ass}, \eqref{EDI_WC} and
\begin{equation} \label{err_0}
    \lim_{n\to\infty} \mathbb{E}_{n,\beta}^{[s]}\left[\frac{1}{n}\left|\iint [h_s(u) \partial_u +  h_s(v)\partial_v] \mathcal{L}_s(u,v) d\lambda_{z^{(n)}}(u)d\lambda_{z^{(n)}}(v)\right| \mathds{1}_{D_{n,K}^{[s]}}(z^{(n)}) \right] = 0, 
\end{equation}
are satisfied. In the next section, we will show that this will be the case in a sufficiently regular framework on the unit circle $\tau=\mathbb{T}$.

\subsubsection{Essential domain of integration} \label{S_EDI}

Denote $[0,2\pi]^n(\uparrow) = \{ \theta\in[0,2\pi]^n : \theta_1\leq \dots\leq \theta_n \}$. We consider the (Fekete) points
    $$ \xi^{(n)}_u = e^{i\alpha_u^{(n)}},\quad \alpha^{(n)}_u =  \frac{2\pi u}{n},\quad u\in\{0,\dots,n-1\}. $$
Let $K\in\R$ and define the sets
\begin{align*}
    E_{n,K}^{[s]} &:= \{ \theta\in[0,2\pi]^n: \sum_{1\leq u\neq v \leq n} (\mathcal{L}_s(e^{i\theta_u},e^{i\theta_v})-\mathcal{L}_0(\xi^{(n)}_u, \xi^{(n)}_v)-sG_{0,0}) \leq Kn\}, \\
    D_{n,K}^{[s]} &:= \{ (e^{i\theta_1},\dots,e^{i\theta_n})\in\mathbb{T}^n : (\theta_1,\dots,\theta_n)\in E_{n,K}^{[s]}\}.
\end{align*}
We will show that for large enough $K$, these sets will be essential domains of integration for $\mathbb{E}_{n,\beta}^{[s]}$, i.e. \eqref{est_D} is satisfied. In order to show this, we first need some initial control over the partition function. The idea will be to restrict to an even smaller (essential) domain of integration where the integrand is close to its maximum. We expect this maximum to be close to the value achieved at $\xi^{(n)}$. The points $\xi^{(n)}$ have the following key property.

\begin{prop} \label{CR_FEK}
    There exists $C_1>0$ such that for all $f\in C^{1}(\mathbb{T})$ and $n\in\N$,
        $$ \left| \int f d\mu_{\xi^{(n)}} - \int f d\mu_{\mathbb{T}} \right| \leq C_1 \frac{\|f^{(1)}\|_{\infty}}{n}. $$
\end{prop}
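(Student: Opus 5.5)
We prove the statement by a direct Riemann-sum comparison on the arcs between consecutive points $\xi^{(n)}_u$. Writing $f_\theta(\theta)=f(e^{i\theta})$, the convention \eqref{tan_der} gives $f_\theta'(\theta)=f^{(1)}(e^{i\theta})$. With $d\mu_{\mathbb{T}}=d\theta/2\pi$ and the arcs $I_u=[\alpha^{(n)}_u,\alpha^{(n)}_{u+1})$ for $u=0,\dots,n-1$, each of $\mu_{\mathbb{T}}$-mass exactly $1/n$, we can write
\begin{equation*}
\int f\, d\mu_{\xi^{(n)}}-\int f\, d\mu_{\mathbb{T}}=\sum_{u=0}^{n-1}\frac{1}{2\pi}\int_{I_u}\big(f_\theta(\alpha^{(n)}_u)-f_\theta(\theta)\big)\,d\theta .
\end{equation*}
The point of this decomposition is that the mass of each arc exactly matches the weight $1/n$ that $\mu_{\xi^{(n)}}$ places on its left endpoint.

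Next, the mean value theorem bounds the integrand on $I_u$ by $|f_\theta(\alpha^{(n)}_u)-f_\theta(\theta)|\le \|f^{(1)}\|_\infty\,(\theta-\alpha^{(n)}_u)$. Integrating over $I_u$ gives at most $\|f^{(1)}\|_\infty (2\pi/n)^2/2$, so each arc contributes at most $\|f^{(1)}\|_\infty\,\pi/n^2$ after dividing by $2\pi$.

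Summing the $n$ arcs yields the bound $\pi\|f^{(1)}\|_\infty/n$, so the claim holds with $C_1=\pi$, uniformly in $f$ and $n$.

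There is no real obstacle. The only care needed is to use the $2\pi$-periodic lift, so that the last arc closes up at $2\pi\equiv 0$, and to confirm that the tangential derivative convention matches $d/d\theta$ on $\mathbb{T}$, which holds because $|\tau'|=1$.
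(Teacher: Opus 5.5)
Your proof is correct and is exactly the Riemann sum argument the paper invokes without further detail; your arc-by-arc decomposition fills it in and gives the explicit constant $C_1=\pi$. One small point: the paper later applies this bound to complex-valued functions such as $u\mapsto u^{\pm j}$. For those, read your mean value step as the inequality $|f_\theta(b)-f_\theta(a)|\le \|f^{(1)}\|_\infty|b-a|$, which follows from the fundamental theorem of calculus and is all you actually use.
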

\begin{proof}
    This follows from a Riemann sum argument.
\end{proof}

The partition function can now be estimated as follows.

\begin{prop} \label{PF_LB}
Suppose that $z_e\in C^{4,\alpha}(\mathbb{T})$ and $V\in C^{3,\alpha}(\gamma)$. Then there exists $C_3\in\R$ such that for all $s\in [0,1]$ and $n\in\N$,
    $$ Z_{n,\beta}^{[s]} \geq n^{-n} \exp\left(-\frac{\beta}{2} \sum_{1\leq u\neq v \leq n} (\mathcal{L}_0(\xi^{(n)}_u, \xi^{(n)}_v)+sG_{0,0}) + C_3 n\right).  $$
\end{prop}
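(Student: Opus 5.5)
We bound $Z_{n,\beta}^{[s]}$ from below by restricting the integral in \eqref{s_Z} to a small neighbourhood of the Fekete configuration $\xi^{(n)}$ and controlling the energy there. We write everything in angular coordinates, $z_u=e^{i\theta_u}$, and use the representation \eqref{H_n^s_alt} together with \eqref{L_s:G}. This gives
$$\mathcal{H}_n^{[s]}=\sum_{u\neq v}\big(\mathcal{L}_0(z_u,z_v)+s\mathcal{G}(z_u,z_v)\big)+s\sum_u (Vz_e)(z_u).$$
The terms $s\sum_u (Vz_e)(z_u)$ and $s\sum_u\log|z_e^{(1)}(z_u)|$ are $O(n)$ uniformly in $s$. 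This holds because $V$ is bounded and $|z_e^{(1)}|$ is bounded above and below by Proposition \ref{ep_reg}. Hence these terms can be absorbed into $C_3n$.

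Next we choose the domain. We restrict to the set of configurations with $\theta_u\in[\alpha_u^{(n)},\alpha_u^{(n)}+\frac{\pi}{n^2}]$ for $u=0,\dots,n-1$. The factor $1/n!$ is cancelled by symmetrizing over the $n!$ orderings of the labels, because these boxes are disjoint. Each box has length $\pi/n^2\ge n^{-2}$, so the volume is at least $n^{-2n}$. To get the stated $n^{-n}$ with a linear error, it is cleaner to use boxes of length $\pi/n$ instead, i.e.\ $\theta_u\in[\alpha_u^{(n)},\alpha_u^{(n)}+\pi/n]$. Then the volume is $(\pi/n)^n\ge n^{-n}e^{-Cn}$, and consecutive points remain separated by at least $\pi/n$.

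On this domain we compare the energies in two parts. For the log-part we need
$$\sum_{u\neq v}\mathcal{L}_0(e^{i\theta_u},e^{i\theta_v})\le\sum_{u\neq v}\mathcal{L}_0(\xi_u,\xi_v)+Cn.$$
This is elementary on $\mathbb{T}$. One has $|e^{i\theta_u}-e^{i\theta_v}|\ge c\,|\xi_u-\xi_v|$ with a uniform $c>0$ for $u\neq v$, since the angular distances change by at most $\pi/n$ while the original ones are at least $2\pi/n$. Summing $\log(1/c)$ over pairs would only give $O(n^2)$, so we need something sharper. A sharper estimate uses $\big|\log|\sin(a+\delta)|-\log|\sin a|\big|\le C|\delta|/d(a)$ with $|\delta|\le \pi/n$. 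Summing over pairs then gives $\sum_{u\neq v} C/(n\, d_{uv})\lesssim \sum_u \frac1n\sum_{j=1}^{n} \frac{n}{j}\cdot\frac1n\cdot n$, which is $O(n\log n)$ rather than $O(n)$. I expect the cleanest remedy is to shift all points by a common rotation plus tiny individual perturbations, i.e.\ to restrict to $\theta_u\in[\alpha_u+\eta,\alpha_u+\eta+n^{-2}]$ with $\eta$ free in $[0,2\pi)$. Rotation invariance of $\mathcal{L}_0$ makes the common shift cost nothing. The individual perturbations of size $n^{-2}$ change the log-energy by $\sum_{u\neq v}n^{-2}/d_{uv}=O(\log n)$. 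The volume is then $2\pi\cdot n^{-2(n-1)}$, which is too small. This tension between volume and energy is the main obstacle. I would resolve it with a first-order cancellation: the linear term of the perturbation sums to zero by the symmetry $\sum_{v\neq u}\cot(\frac{\alpha_u-\alpha_v}{2})=0$. The quadratic term $\sum_{u\neq v}(\delta_u-\delta_v)^2/d_{uv}^2$, with $|\delta|\le\pi/n$, is at most $\sum_{u\neq v}n^{-2}\,n^2/|u-v|^2=O(n)$. This yields the estimate for boxes of length $\pi/n$ via a second-order Taylor bound, with the third-order remainder controlled similarly.

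For the Grunsky part, $\mathcal{G}\in C^{1}(\mathbb{T}^2)$ by Proposition \ref{reg_G}, since $z_e\in C^{4,\alpha}$. Hence moving each point by $O(1/n)$ changes $\sum_{u\neq v}\mathcal{G}$ by $O(n)$. Then Proposition \ref{L_1_exp} gives
$$\sum_{u\neq v}\mathcal{G}(\xi_u,\xi_v)=n^2\iint\mathcal{G}\,d\mu_{\mathbb{T}}^{\otimes2}-n\int\mathcal{G}(u,u)\,d\mu_{\mathbb{T}}+O(n)=n(n-1)G_{0,0}+O(n).$$
Here we used the expansion $\mathcal{G}=G_{0,0}+\phi G\phi^t$, which has zero mean against $\mu_{\mathbb{T}}$ in each variable, together with Proposition \ref{CR_FEK} applied to the $C^1$ function $\mathcal{G}$ in two variables. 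Combining the two parts, the integrand on the domain is at least
$$\exp\Big(-\frac\beta2\sum_{u\neq v}(\mathcal{L}_0(\xi_u,\xi_v)+sG_{0,0})-Cn\Big),$$
and multiplying by the volume $n^{-n}e^{-Cn}$ gives the claim, with all constants uniform in $s\in[0,1]$.
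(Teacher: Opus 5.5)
Your proof is correct, and for the logarithmic energy it follows the paper. Both restrict to an $O(1/n)$-neighbourhood of $\xi^{(n)}$; your symmetrization over disjoint boxes plays the role of the paper's restriction to ordered configurations. Both Taylor-expand along the segment from $\xi^{(n)}$, kill the linear term with $\sum_{k=1}^{n-1}\cot(\pi k/n)=0$, and show the second-order term is $O(n)$. The paper quotes $\varphi_n''(a)\le\frac43(n-1/n)$ from \cite{CourteautJohansson2025}. You instead bound $\varphi_n''$ directly by $\sum_{u\neq v}(\delta_u-\delta_v)^2/d_{uv}^2\lesssim\sum_{u\neq v}|u-v|_{\mathrm{cyc}}^{-2}=O(n)$, which is self-contained. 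Since your separation bound holds uniformly along the segment, the integral form of the second-order remainder already suffices, so no third-order term is needed. The exploratory detours (boxes of length $n^{-2}$, a free rotation $\eta$) can simply be deleted.

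The genuine difference is in the Grunsky term. The paper writes $\sum_{u,v}(\mathcal{G}-G_{0,0})=\sum_{k,l}G_{k,l}X_kX_l$ with $X_k=\sum_u\phi_k(\theta_u)$. It uses $|X_k|\le\sqrt2\,n$ and $|X_l|\le C|l|$ from the Riemann-sum estimate, and then needs $\sum_{k,l}|G_{k,l}||l|<\infty$; this is where $z_e\in C^{4,\alpha}$ enters via Proposition~\ref{reg_G}. You only use that $\mathcal{G}$ is Lipschitz on $\mathbb{T}^2$. Moving every point by $O(1/n)$ then costs $n^2\cdot O(1/n)=O(n)$, and a two-variable Riemann sum at the Fekete points gives $\sum_{u\neq v}\mathcal{G}(\xi^{(n)}_u,\xi^{(n)}_v)=n(n-1)G_{0,0}+O(n)$.

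Your route is more elementary and needs much less regularity for this step (e.g.\ $z_e\in C^{2}$ with $|z_e^{(1)}|>0$ and $V\in C^{1}$). The paper's route keeps the argument inside the Grunsky-coefficient framework that it needs anyway in Proposition~\ref{G_lb}.
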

\begin{proof}
Denote 
    $$ \mathcal{E}_n = \left\{\theta\in[0,2\pi]^n : \max_{1\leq u\leq n} |\theta_u-\alpha^{(n)}_u|  \leq \frac{1}{n} \right\}\subset [0,2\pi]^n(\uparrow), $$
so that
    $$ Z_{n,\beta}^{[s]} \geq \int_{\mathcal{E}_n} \exp\left( -\frac{\beta}{2} \sum_{1\leq u\neq v \leq n} \mathcal{L}_s(e^{i\theta_u},e^{i\theta_v}) \right) \prod_{k=1}^n d\theta_k.  $$
The idea will be to obtain an appropriate upper bound for 
\begin{equation} \label{sum_Ls}
    \sum_{1\leq u\neq v \leq n} \mathcal{L}_s(e^{i\theta_u},e^{i\theta_v}) = \sum_{1\leq u\neq v \leq n} \mathcal{L}_0(e^{i\theta_u},e^{i\theta_v}) + s\sum_{1\leq u\neq v \leq n} \mathcal{G}(e^{i\theta_u},e^{i\theta_v}),
\end{equation}
and to use the fact that
    $$ \int_{\mathcal{E}_n} \prod_{k=1}^n d\theta_k \geq (2n)^{-n}. $$
We first focus on the first term in \eqref{sum_Ls}. Denote $\beta_u(a) = \alpha^{(n)}_u + a t_u(\theta)$ with $t_u(\theta) = \theta_u-\alpha^{(n)}_u - \sigma(\theta)$ and $\sigma(\theta) = \frac{1}{n}\sum_{u=1}^n (\theta_u-\alpha^{(n)}_u) $ and consider 
        $$\p_n(a) = \sum_{1\leq u \neq v \leq n} \widetilde{\mathcal{L}}_0(\beta_u(a),\beta_v(a)). $$
    Then we can write
        $$\p_n(a) = - \sum_{1\leq u \neq v \leq n} \log\left|2\sin\left(\frac{\alpha^{(n)}_u-\alpha^{(n)}_v + a (t_u-t_v)}{2}\right)\right|. $$
    Hence, by Taylor's theorem,
    \begin{equation} \label{L0_Taylor}
        \p_n(1) = \p_n(0) + \p_n'(0) + \int_0^1 \p_n''(a) (1-a) da,
    \end{equation}
    with
    \begin{align*}
        \p_n(1) &= \sum_{1\leq u \neq v \leq n} \widetilde{\mathcal{L}}_0(\theta_u,\theta_v), \quad \p_n(0) = \sum_{1\leq u \neq v \leq n} \widetilde{\mathcal{L}}_0(\alpha^{(n)}_u,\alpha^{(n)}_v), \\
        \p_n'(0) &= - \sum_{1\leq u \neq v \leq n} \cot\left(\frac{\alpha^{(n)}_u-\alpha^{(n)}_v}{2}\right) \frac{t_u-t_v}{2},\quad 
        \p_n''(a) = \sum_{1\leq u \neq v \leq n} \frac{(t_u-t_v)^2}{4 \sin^2(\frac{\beta_u(a)-\beta_v(a)}{2})}.
    \end{align*}
    Due to the identity $\sum_{k=1}^{n-1} \cot\left(\frac{\pi k}{n}\right)=0$, we have
        $$ \p_n'(0) = - \sum_{u=1}^n t_u \sum_{k=1}^{n-1} \cot\left(\frac{\pi k}{n}\right) = 0.$$
    On the other hand, see \cite[Proof of Lem. 3.2]{CourteautJohansson2025},
    $$\p_n''(a) \leq \frac{4}{3}(n-1/n) . $$   
We can therefore conclude that there exists $c_1>0$ such that
    $$\sum_{1\leq u\neq v \leq n} (\mathcal{L}_0(e^{i\theta_u},e^{i\theta_v})-\mathcal{L}_0(\xi^{(n)}_u, \xi^{(n)}_v)) \leq c_1 n.$$
For the second term in \eqref{sum_Ls}, we write
    $$ \sum_{1\leq u\neq v \leq n} \mathcal{G}(e^{i\theta_u},e^{i\theta_v}) = \sum_{u,v=1}^n \mathcal{G}(e^{i\theta_u},e^{i\theta_v}) + \sum_{u=1}^n (\log|z_e^{(1)}(e^{i\theta_u})|+(Vz_e)(e^{i\theta_u})). $$
Since $z_e\in C^1$ and $V\in C^0$, we have
    $$ \sum_{u=1}^n (\log|z_e^{(1)}(e^{i\theta_u})|+(Vz_e)(e^{i\theta_u})) \leq (\log \| z_e^{(1)} \|_{\infty}+\| V \|_{\infty}) n.$$
 On the other hand, 
     $$ \sum_{u,v=1}^n (\mathcal{G}(e^{i\theta_u},e^{i\theta_v}) - G_{0,0}) = \sum_{|k|,|l|\geq 1} G_{k,l} \left( \sum_{u=1}^n \phi_k(\theta_u) \right) \left( \sum_{v=1}^n \phi_l(\theta_v) \right). $$
Since $\theta \in \mathcal{E}_n$, we have the following property: there exists $C_2>0$ such that for all $f\in C^{1}$ and $n\in\N$,
        $$ \left| \int f d\mu_{z^{(n)}} - \int f d\mu_{\mathbb{T}} \right| \leq C_2 \frac{\|f^{(1)}\|_{\infty}}{n}. $$
where $z^{(n)}=(e^{i\theta_1},\dots,e^{i\theta_n})$. Indeed, this follows from Proposition \ref{CR_FEK} and the fact that
    $$ \left| \int f d\mu_{z^{(n)}} - \int f d\mu_{\xi^{(n)}} \right| \leq \frac{\|f^{(1)}\|_{\infty}}{n} \sum_{u=1}^n |t_u| \leq \frac{\|f^{(1)}\|_{\infty}}{n}.$$
Hence, as $\|\phi_k\|_{\infty}\leq \sqrt{2}$ and $\int \phi_l d\mu_{\mathbb{T}}=0$ and $\|\phi_l'\|_{\infty}\leq |l| \sqrt{2}$, we obtain
    $$ \sum_{u,v=1}^n (\mathcal{G}(e^{i\theta_u},e^{i\theta_v}) - G_{0,0}) \leq  2 C_2 n \sum_{|k|,|l|\geq 1} |G_{k,l}| |l|. $$
Hence, whenever $z_e\in C^{4,\alpha}$ and $V\in C^{3,\alpha}$, by Proposition \ref{reg_G}, there exists $c_2>0$ such that
    $$\sum_{u,v=1}^n (\mathcal{G}(e^{i\theta_u},e^{i\theta_v})-G_{0,0}) \leq c_2 n. $$
This finishes the proof.
\end{proof}

We now show that Proposition~\ref{est_D} is satisfied.

\begin{prop} \label{est_D_prop}
    Suppose that $z_e\in C^{4,\alpha}(\mathbb{T})$ and $V\in C^{3,\alpha}(\gamma)$. Then there exists $C_2\in\R$ such that for all $K\in\R$ and $n\in\N$,
    $$\sup_{s\in[0,1]} \mathbb{P}_{n,\beta}^{[s]}[(D_{n,K}^{[s]})^c] \leq \exp(C_2n-\frac{\beta}{2}Kn).$$
\end{prop}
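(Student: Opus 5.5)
The plan is to bound the probability of the complement directly by the ratio of the integral over $(D_{n,K}^{[s]})^c$ to the lower bound on the partition function from Proposition \ref{PF_LB}. By definition of the interpolating gas,
$$
\mathbb{P}_{n,\beta}^{[s]}[(D_{n,K}^{[s]})^c] = \frac{1}{n!Z_{n,\beta}^{[s]}}\int_{(E_{n,K}^{[s]})^c} \exp\Big(-\frac{\beta}{2}\widetilde{\mathcal{H}}_n^{[s]}(\theta) + s\sum_{k=1}^n \log|\widetilde{z_e}'(\theta_k)|\Big)\prod_{k=1}^n d\theta_k .
$$
Using \eqref{H_n^s_alt}, the exponent is $-\frac{\beta}{2}\sum_{u\neq v}\mathcal{L}_s(e^{i\theta_u},e^{i\theta_v}) + \sum_k\big(s\log|z_e^{(1)}| - \frac{\beta}{2}s\,Vz_e\big)(e^{i\theta_k})$. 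The single-sum part is bounded above by $c\,n$ uniformly in $s\in[0,1]$, since $z_e\in C^1$ and $V$ are bounded. On $(E_{n,K}^{[s]})^c$ the definition of $E_{n,K}^{[s]}$ gives
$$
\sum_{u\neq v}\mathcal{L}_s(e^{i\theta_u},e^{i\theta_v}) > \sum_{u\neq v}\big(\mathcal{L}_0(\xi_u^{(n)},\xi_v^{(n)}) + sG_{0,0}\big) + Kn .
$$
Hence the integrand is at most $\exp\big(-\frac{\beta}{2}\sum_{u\neq v}(\mathcal{L}_0(\xi_u^{(n)},\xi_v^{(n)})+sG_{0,0}) - \frac{\beta}{2}Kn + cn\big)$, and the $\theta$-integral contributes at most a factor $(2\pi)^n$.

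Next I divide by the lower bound from Proposition \ref{PF_LB}. One point of bookkeeping: that proposition was stated for $Z_{n,\beta}^{[s]}$, and its proof integrates over the ordered set $\mathcal{E}_n$, which absorbs the factor $1/n!$ in the symmetric definition \eqref{s_Z}. So I need to check that the $n!$ in the numerator cancels consistently, or at worst contributes a factor $n!$ that is absorbed together with $n^{n}$. The Fekete energy $\sum_{u\neq v}(\mathcal{L}_0(\xi_u,\xi_v)+sG_{0,0})$ cancels exactly between numerator and denominator; this is precisely why $E_{n,K}^{[s]}$ is centered at the Fekete configuration with the shift $sG_{0,0}$. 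The resulting ratio is
$$
n^{n}\,(2\pi)^n\, e^{cn - C_3 n - \frac{\beta}{2}Kn}.
$$

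The main obstacle is the polynomial-type factor $n^n = e^{n\log n}$ (and possibly $n!$), which is not of the form $e^{C_2 n}$. I would handle it by rechecking the volume count. The symmetric integral over $[0,2\pi]^n$ against the ordered lower bound produces $n!\cdot (2n)^{-n}$ versus $(2\pi)^n$, and since $n!\,n^{-n}\geq e^{-n}$, the combination $n!/(n!\,n^{-n})$ gives $n^n/n! \leq e^{n}$. So after the correct normalization (the $n!$ from symmetrization in the numerator matches the $n!$ relating $Z^{[s]}$ to the ordered integral), only $e^{O(n)}$ survives. Collecting all the linear-in-$n$ constants, which are uniform in $s$ and $K$, into $C_2$ then gives the claimed bound $\exp(C_2 n-\frac{\beta}{2}Kn)$, uniformly over $s\in[0,1]$.
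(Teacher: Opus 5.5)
Your proposal is correct and follows the paper's proof. As there, you divide the integral over $(E_{n,K}^{[s]})^c$ by the lower bound of Proposition~\ref{PF_LB}, cancel the Fekete energy $\sum_{u\neq v}(\mathcal{L}_0(\xi^{(n)}_u,\xi^{(n)}_v)+sG_{0,0})$, bound the volume by $(2\pi)^n$, and finish with $n^n/n!\leq e^n$. Explicitly bounding the one-body terms $s\log|z_e^{(1)}|-\frac{\beta}{2}sVz_e$ by $cn$ is a little more careful than the paper, which absorbs them silently. Your intermediate ``ratio $n^n(2\pi)^n\cdots$'' drops the $1/n!$ from your own first display, and that factor is exactly what produces $n^n/n!$; likewise the phrase ``$n!/(n!\,n^{-n})$'' should read $1/(n!\,n^{-n})$.
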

\begin{proof}
    Proposition \ref{PF_LB} implies that
    \begin{align*}
    \mathbb{P}[(D_{n,K}^{[s]})^c]
    &\leq \frac{n^n}{n!} e^{-C_2n} \int_{(E_{n,K}^{[s]})^c}
    \exp\bigg( -\frac{\beta}{2} \sum_{1\leq u\neq v \leq n}
    \big(\mathcal{L}_s(e^{i\theta_u},e^{i\theta_v}) \\
    &\hspace{45mm}-\mathcal{L}_0(\xi^{(n)}_u, \xi^{(n)}_v)-sG_{0,0}\big)\bigg)
    \prod_{k=1}^n d\theta_k.
    \end{align*}
    Hence, together with the definition of $E_{n,K}^{[s]}$, we obtain
    $$\mathbb{P}[(D_{n,K}^{[s]})^c] \leq \frac{n^n}{n!} e^{-C_2n-\frac{\beta}{2}Kn}\int_{[0,2\pi]^n} \prod_{k=1}^n d\theta_k \leq \frac{n^n}{n!} (2\pi)^n e^{-C_2n-\frac{\beta}{2}Kn}.$$
    This proves the desired estimate.
\end{proof}

In order to show that Proposition \ref{LS_lim_s_proof}~\ref{est_V} and \ref{lim_E_V} are satisfied, we will extract some information from the essential domain of integration $D_{n,K}^{[s]}$ about the rate of decay of
    $$\sup_{z^{(n)}\in D_{n,K}^{[s]}} \left| \int f \, d\lambda_{z^{(n)}} \right|,\quad n\to\infty.$$
We will do so by studying the differences
\begin{equation} \label{t_u_def}
    t_u(\theta) = \theta_{(u)}-\alpha^{(n)}_u - \sigma(\theta),
\end{equation}
for $\theta\in E_{n,K}^{[s]}$ where $\theta_{(1)}\leq\dots\leq \theta_{(n)} $ are the ordered coordinates of $(\theta_1,\dots,\theta_n)$ and
$$\sigma(\theta) = \frac{1}{n}\sum_{u=1}^n (\theta_{(u)}-\alpha^{(n)}_u).$$
The idea is that by Proposition \ref{CR_FEK} the measure $\mu_{\xi^{n}}$ is already close to the equilibrium measure $\mu_{\mathbb{T}}$. To proceed, we need to obtain an appropriate lower bound for 
$$\sum_{1\leq u\neq v \leq n} (\mathcal{L}_s(e^{i\theta_u},e^{i\theta_v})-\mathcal{L}_0(\xi^{(n)}_u, \xi^{(n)}_v)-G_{0,0}).$$
in terms of the $t_u(\theta)$. The result below shows that the main contribution to the sum is coming from the initial $\log$-kernel.

\begin{prop} \label{G_lb}
Suppose that $z_e\in C^{4,\alpha}(\mathbb{T})$ and $V\in C^{3,\alpha}(\gamma)$. Let $\kappa\in(0,1)$ be as in Proposition~\ref{SGI_UC}. Then there exists a function $F:\N\to\R$ with $F(n) = O(n)$ as $n\to\infty$ such that for all $z\in\mathbb{T}^n$,
    $$ \sum_{1\leq u \neq v \leq n} (\mathcal{G}(z_u,z_v)-G_{0,0}) \geq - \kappa^{\frac{1}{2}} \sum_{1\leq u \neq v \leq n} (\mathcal{L}_0(z_u,z_v)-\mathcal{L}_0(\xi^{(n)}_u, \xi^{(n)}_v)) + F(n). $$
\end{prop}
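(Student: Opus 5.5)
We expand both sums in the basis $\psi$, using the expansion \eqref{Guvexp}, the identity $\mathcal{L}_0=\psi(u)\psi(v)^t$ on $\mathbb{T}$, and the strengthened Grunsky inequality of Proposition \ref{SGI_UC}. For $z\in\mathbb{T}^n$ we set
$$\vec{p}(z)=\sum_{u=1}^n \psi(z_u)\in \R^{\Z_{\neq 0}}, \qquad \text{so that } \vec p_k=|k|^{-1/2}\sum_u \cos(|k|\theta_u)\ \text{or}\ |k|^{-1/2}\sum_u\sin(|k|\theta_u).$$
On the full double sum (diagonal included), formally
$$\sum_{u,v=1}^n(\mathcal{G}(z_u,z_v)-G_{0,0})=\vec p\,K^{\gamma,V}_{\mathbb{T}}\,\vec p^{\,t}\geq -\kappa\,\|\vec p\|_{\ell^2}^2 .$$
The diagonal terms $\mathcal{G}(z_u,z_u)-G_{0,0}=-\log|z_e^{(1)}(z_u)|+(Vz_e)(z_u)-G_{0,0}$ are bounded uniformly by Proposition \ref{ep_reg}, so removing them costs only $O(n)$. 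The issue is that $\|\vec p\|^2=\sum_k \frac{1}{|k|}|\sum_u e^{ik\theta_u}|^2\cdot\frac{1}{2}$-type sums diverge, since the diagonal contributes $n\sum_k 1/|k|$. The bound therefore has to be made with a frequency cutoff or regularization.

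\textbf{Step 1: regularize.} I would replace $z_u$ by a smeared measure, or more simply truncate at frequencies $|k|\leq N$ with $N=n$ (or $N=n^2$). On the full double sum, split
$$\vec pK\vec p^{\,t}=\vec p_{\leq N}K\vec p_{\leq N}^{\,t}+(\text{cross and tail terms}).$$
The tail terms are controlled by the decay $|K_{k,l}|=2|kl|^{1/2}|G_{k,l}|\leq C|k|^{1/2-p}|l|^{1/2-q}$ from Proposition \ref{reg_G}, with $p+q=3+\alpha$, together with the trivial bound $|\vec p_k|\leq \sqrt2\, n|k|^{-1/2}$. For instance, $\sum_{|k|>N,l}|K_{kl}||\vec p_k||\vec p_l|\leq Cn^2\sum_{|k|>N}|k|^{-p}\sum_l |l|^{-q}$. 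Choosing $p\approx 2+\alpha/2$ and $q\approx1+\alpha/2$ makes this $O(n^2N^{-1-\alpha/2})=O(n)$ for $N=n$. The absolute convergence used in Proposition \ref{PF_LB} also justifies the pointwise expansion, since $\mathcal{G}$ is continuous and $\sum|G_{kl}|<\infty$.

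\textbf{Step 2: compare with $\mathcal{L}_0$.} Apply Proposition \ref{SGI_UC} on the truncated part:
$$\vec p_{\leq N}K\vec p_{\leq N}^{\,t}\geq -\kappa\|\vec p_{\leq N}\|^2 .$$
Now $\|\vec p_{\leq N}\|^2=\sum_{u,v}\mathcal{L}_0^{(N)}(z_u,z_v)$, where $\mathcal{L}_0^{(N)}(e^{ia},e^{ib})=\sum_{k=1}^N\frac{\cos k(a-b)}{k}$ is the Fourier-truncated log kernel. We need the estimate
$$\sum_{u,v}\mathcal{L}_0^{(N)}(z_u,z_v)\leq \sum_{u\neq v}\mathcal{L}_0(z_u,z_v)-\sum_{u\neq v}\mathcal{L}_0(\xi_u,\xi_v)+O(n),$$
uniformly in $z$. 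The diagonal contributes $n\log N+O(n)=n\log n+O(n)$, which matches $-\sum_{u\neq v}\mathcal{L}_0(\xi_u,\xi_v)=n\log n$ exactly, since $\prod_{u\neq v}|\xi_u-\xi_v|=n^n$. The off-diagonal comparison $\mathcal{L}_0^{(N)}\leq \mathcal{L}_0+C$ fails pointwise near the diagonal in the wrong direction. To handle this, I would use a Fejér-type (positive) smoothing $\mathcal{L}_0*P_r$ with $r=1-1/n$ instead of sharp truncation. For this smoothing, $\mathcal{L}_0*P_r(a,b)=-\log|1-re^{i(a-b)}|\leq \mathcal{L}_0+\log 2$ holds pointwise, by $|1-re^{i\phi}|\geq\frac12|1-e^{i\phi}|$. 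The diagonal value is then $-\log(1-r)=\log n$. So I would use weights $r^{|k|}$ in place of the truncation in Step 1; the tail estimates are analogous.

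\textbf{Step 3: obtain the factor $\kappa^{1/2}$.} This yields the bound with $\kappa$ instead of $\kappa^{1/2}$, which is stronger. The $\kappa^{1/2}$ in the statement presumably arises because the smoothed vector $\vec p_r$, with entries weighted by $r^{|k|}$, appears as $\vec p\,R$. One then splits $\vec pK\vec p^{\,t}$ against $\vec p_rK\vec p_r^{\,t}$ via Cauchy–Schwarz with a parameter, say $|ab|\leq \frac{\delta}{2}a^2+\frac{1}{2\delta}b^2$ with $\delta$ tuned so that $\kappa(1+\delta)\leq\kappa^{1/2}$. The residual terms are then $O(n)$ by the decay of $K$. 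The main obstacle is this near-diagonal and high-frequency bookkeeping: showing uniformly in $z\in\mathbb{T}^n$ that $\vec p(K-RKR)\vec p^{\,t}$ is $O(n)$. The trivial bound $|\vec p_k|\lesssim n|k|^{-1/2}$ needs $\sum_{k,l}|K_{kl}|(1-r^{|k|})\lesssim 1/n$, which the $C^{4,\alpha}$ decay from Proposition \ref{reg_G} just provides.
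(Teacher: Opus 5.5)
Your architecture is the paper's: absorb the diagonal at cost $O(n)$, expand in $\psi$, apply Proposition~\ref{SGI_UC} to an $\ell^2$ vector built from $f_k=\sum_u\psi_k(\theta_u)$, and pass to Poisson weights $r^{|k|}$ with $1-r\asymp 1/n$. The diagonal term $n\log\frac{1}{1-r}=n\log n+O(n)$ then cancels $\sum_{u\neq v}\mathcal{L}_0(\xi^{(n)}_u,\xi^{(n)}_v)=-n\log n$.

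\textbf{The gap is the off-diagonal comparison in Step 2.} You use $-\log|1-re^{i\phi}|\leq\mathcal{L}_0+\log 2$. Summed over the $n(n-1)$ pairs $u\neq v$, this costs $n(n-1)\log 2$, so it only yields $F(n)=O(n^2)$. That is useless: the later control of $\sum_u t_u^2$ needs $F(n)=O(n)$. What you need is a per-pair error of order $1-r\asymp 1/n$, uniformly in $\phi$. This holds: since $|1-re^{i\phi}|^2=(1-r)^2+2r(1-\cos\phi)\geq r|1-e^{i\phi}|^2$, one has $-\log|1-re^{i\phi}|\leq\mathcal{L}_0(e^{i\phi},1)+\frac12\log\frac1r$. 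The paper gets the bound $1-r$ by Taylor expanding in $r$ and using $-\mathrm{Re}\,\frac{e^{i\phi}}{1-ae^{i\phi}}=\frac{a-\cos\phi}{1+a^2-2a\cos\phi}\leq\frac{1}{1+a}\leq 1$. The total off-diagonal error is then $n(n-1)(1-r)=O(n)$.

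Incidentally, $\mathcal{L}_0^{(N)}\leq\mathcal{L}_0+C$ does hold uniformly, so your stated reason for abandoning sharp truncation is not the right one. Its actual defect is that $-\sum_{k>N}\cos(k\phi)/k$ is only $O(1/(N|\phi|))$, not uniformly $O(1/N)$.

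\textbf{The factor $\kappa^{1/2}$ does not come from a Cauchy--Schwarz splitting.} The paper keeps the sharp truncation for the Grunsky step, getting $-\kappa\sum_{|k|\leq N}f_k^2$ plus the tail from your Step 1. It then spends half of $\kappa$ to dominate the truncation by Poisson weights: $\kappa^{1/2}\leq\kappa^{|k|/(2N)}$ for $|k|\leq N$. With $r=\kappa^{1/(2N)}$ (so $N\asymp n$), this gives $\kappa\sum_{|k|\leq N}f_k^2\leq\kappa^{1/2}\sum_{k}r^{|k|}f_k^2$, and one never has to compare $K$ with $RKR$.

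Your alternative, applying Proposition~\ref{SGI_UC} directly to $\vec pR\in\ell^2$, does work at the $C^{4,\alpha}$ level and gives $\kappa$ in place of $\kappa^{1/2}$. Two points need fixing:
\begin{itemize}
\item The error estimate should read $|\vec p(K-RKR)\vec p^{\,t}|\leq 2n^2\sum_{k,l}|G_{k,l}|(1-r^{|k|+|l|})\lesssim n\sum_{k,l}|G_{k,l}|\max\{|k|,|l|\}$. This is finite by Proposition~\ref{reg_G}, since $|G_{k,l}|\lesssim\max\{|k|,|l|\}^{-3-\alpha}$. Your condition $\sum_{k,l}|K_{k,l}|(1-r^{|k|})\lesssim 1/n$ omits the factor $|kl|^{-1/2}$, and in that form it is not implied by $C^{4,\alpha}$.
\item Deducing the stated $\kappa^{1/2}$ inequality from your $\kappa$ version requires $\sum_{u\neq v}(\mathcal{L}_0(z_u,z_v)-\mathcal{L}_0(\xi^{(n)}_u,\xi^{(n)}_v))\geq 0$. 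This holds by Proposition~\ref{L_lb}, but you should say so.
\end{itemize}
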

\begin{proof}
    After including the diagonal in the double sum on the left hand side, the left hand side can be written as
    $$\sum_{u,v=1}^n \sum_{|k|,|l|\geq 1} (K^{\gamma,V}_{\mathbb{T}})_{k,l} \psi_k(\theta_u) \psi_l(\theta_v) + \sum_{u=1}^n (\log|z_e^{(1)}(e^{i\theta_u})|-(Vz_e)(e^{i\theta_u})-G_{0,0}). $$
    Since $z_e\in C^1$ and $V\in C^0$, there exists $c_1>0$ such that
        $$\left|\sum_{u=1}^n (\log|z_e^{(1)}(e^{i\theta_u})|-(Vz_e)(e^{i\theta_u})-G_{0,0})|\right|\leq n c_1,$$    
    which can be absorbed in $F(n)$. We will use the strengthened Grunsky inequality to estimate the remaining part
        $$\sum_{u,v=1}^n \sum_{|k|,|l|\geq 1} (K^{\gamma,V}_{\mathbb{T}})_{k,l} \psi_k(\theta_u) \psi_l(\theta_v). $$
    Denote $f_k = \sum_{u=1}^n \psi_k(\theta_u)$ and consider $f=\sum_{|k|=1}^N f_k \psi_k $ for some appropriate $N\in\N$ that we will fix later. The first term can then be written as
    $$ \sum_{|k|,|l|\geq 1} (K^{\gamma,V}_{\mathbb{T}})_{k,l} f_k f_l = \sum_{|k|,|l|=1}^N (K^{\gamma,V}_{\mathbb{T}})_{k,l} f_k f_l + \sum_{\max\{|k|,|l|\}\geq N} (K^{\gamma,V}_{\mathbb{T}})_{k,l} f_k f_l.$$
    Since $f\in L^2$ (we can not guarantee this for $N=\infty$ at this point), we can  apply the strengthened Grunsky inequality, which gives 
        $$ \sum_{|k|,|l|=1}^N (K^{\gamma,V}_{\mathbb{T}})_{k,l} f_k f_l \geq - \kappa \sum_{|k|,|l|=1}^N f_k^2.$$
    By Proposition \ref{reg_G}, the second term can be absorbed in $F(n)$ whenever $N=N(n)\to\infty$ and $n/N(n)=O(1)$ as $n\to\infty$ since 
        $$\left|\sum_{\max\{|k|,|l|\}\geq N} (K^{\gamma,V}_{\mathbb{T}})_{k,l} f_k f_l \right|\leq n^2 \sum_{\max\{|k|,|l|\}\geq N} |(K^{\gamma,V}_{\mathbb{T}})_{k,l}| |kl|^{-\frac{1}{2}},$$
    and $z_e\in C^{4,\alpha}$. It thus remains to obtain an appropriate upper bound for
        $$\kappa^{\frac{1}{2}} \sum_{|k|,|l|=1}^N f_k^2,$$
    in terms of the initial $\log$-kernel. Since $\kappa^{\frac{1}{2}}$, we have $\kappa^{\frac{1}{2}} \leq \kappa^{\frac{|k|}{2N}}$ and thus
    $$\kappa^{\frac{1}{2}} \sum_{|k|,|l|=1}^N f_k^2 \leq \sum_{|k|,|l|=1}^N \kappa^{\frac{|k|}{2N}} f_k^2 = \sum_{|k|=1}^{\infty} \kappa^{\frac{|k|}{2N}} f_k^2. $$
    We will divide the right hand side into two terms
    $$\sum_{1\leq u \neq v \leq n} \sum_{|k|=1}^{\infty} \kappa^{\frac{|k|}{2N}} \psi_k(\theta_u)\psi_k(\theta_v) + \sum_{u=1}^n \sum_{|k|=1}^{\infty} \kappa^{\frac{|k|}{2N}} \psi_k(\theta_u)^2.$$
    The first term can be connected to the initial $\log$-kernel. We have
    \begin{align*}
        \sum_{|k|=1}^{\infty} \kappa^{\frac{|k|}{2N}} \psi_k(\theta_u)\psi_k(\theta_v) &=  \sum_{|k|=1}^{\infty} \frac{\kappa^{\frac{|k|}{2N}}}{k} (\cos\theta_u\cos\theta_v + \sin\theta_u\sin\theta_v) \\
        &= \sum_{|k|=1}^{\infty} \frac{\kappa^{\frac{|k|}{2N}}}{k} \cos(\theta_u-\theta_v) \\
        &=- \log|1-\kappa^{\frac{1}{2N}}e^{i(\theta_u-\theta_v)}|.
    \end{align*}
    By Taylor's theorem there exists $a\in(\kappa^{\frac{1}{2N}},1)$ such that
    $$-\log|1-\kappa^{\frac{1}{2N}}e^{i(\theta_u-\theta_v)}| = -\log|1-e^{i(\theta_u-\theta_v)}| - (1-\kappa^{\frac{1}{2N}}) \RP{\frac{e^{i(\theta_u-\theta_v)}}{1-ae^{i(\theta_u-\theta_v)}}}.$$
    Now we use the fact that
        $$-\RP{\frac{e^{i(\theta_u-\theta_v)}}{1-ae^{i(\theta_u-\theta_v)}}} = \frac{a-\cos(\theta_u-\theta_v)}{1+a^2-2a\cos(\theta_u-\theta_v)} \leq \frac{1}{a+1} \leq 1, $$
    to conclude that
        $$\sum_{1\leq u \neq v \leq n} \sum_{|k|=1}^{\infty} \kappa^{\frac{|k|}{2N}} \psi_k(\theta_u)\psi_k(\theta_v) \leq \sum_{1\leq u \neq v \leq n} \widetilde{\mathcal{L}}_0(\theta_u,\theta_v) + n(n-1) (1-\kappa^{\frac{1}{2N}}).$$
    For the second term, we use the estimate
        $$ \sum_{u=1}^n \sum_{|k|=1}^{\infty} \kappa^{\frac{|k|}{2N}} \psi_k(\theta_u)^2  \leq n \sum_{|k|=1}^{\infty} \frac{\kappa^{\frac{|k|}{2N}}}{k} = - n \log(1-\kappa^{\frac{1}{2N}}).$$
    This estimate explains the importance of the prefactor $\kappa^{\frac{1}{2}}\neq 1$ in the beginning. We can now take $N=N(n)$ such that $1-\kappa^{\frac{1}{2N(n)}}\asymp \frac{1}{n}$ as $n\to\infty$. Then $n/N(n)=O(1)$ as $n\to\infty$ and 
        $$\kappa^{\frac{1}{2}} \sum_{|k|=1}^{N(n)} f_k^2 \leq \sum_{1\leq u \neq v \leq n} \mathcal{L}_0(e^{i\theta_u},e^{i\theta_v}) + (n-1) + n\log n,$$
    where the second term may be absorbed in $F(n)$. For the third term, we note that
        $$ \sum_{1\leq u \neq v \leq n} \mathcal{L}_0(\xi^{(n)}_u, \xi^{(n)}_v) = - n\log n, $$
    see, e.g., \cite[\S 3.2]{CourteautJohansson2025}. We can therefore conclude that
        $$ \sum_{1\leq u \neq v \leq n} (\mathcal{G}(e^{i\theta_u},e^{i\theta_v})-G_{0,0}) \geq - \kappa^{\frac{1}{2}} \sum_{1\leq u \neq v \leq n} (\mathcal{L}_0(e^{i\theta_u},e^{i\theta_v})-\mathcal{L}_0(\xi^{(n)}_u, \xi^{(n)}_v)) + F(n), $$
    which finishes the proof.
\end{proof}

We can then use the following estimate for the initial $\log$-kernel.

\begin{prop} \label{L_lb}
For all $\theta\in[0,2\pi]^n$, we have
    $$ \sum_{1\leq u \neq v \leq n} (\mathcal{L}_0(e^{i\theta_u},e^{i\theta_v})-\mathcal{L}_0(\xi^{(n)}_u, \xi^{(n)}_v)) \geq \frac{n}{4} \sum_{u=1}^n t_u^2(\theta). $$
\end{prop}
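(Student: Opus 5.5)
The plan is to reduce to the Taylor expansion $\p_n$ already used in the proof of Proposition \ref{PF_LB}, and then bound $\p_n''$ from below instead of from above.

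\textbf{Reduction.} The left-hand side is symmetric in $(\theta_1,\dots,\theta_n)$, and so is $\sum_u t_u^2(\theta)$, because $t_u$ in \eqref{t_u_def} is defined through the ordered coordinates. So I will assume $\theta_1\le\dots\le\theta_n$, i.e. $\theta\in[0,2\pi]^n(\uparrow)$. If two coordinates coincide, or $\theta_1=0$ and $\theta_n=2\pi$, the left-hand side is $+\infty$ and there is nothing to prove. So I also assume the points $e^{i\theta_u}$ are distinct.

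The kernel $\widetilde{\mathcal{L}}_0(\theta_u,\theta_v)=-\log|2\sin((\theta_u-\theta_v)/2)|$ depends only on differences, so subtracting the common shift $\sigma(\theta)$ from every coordinate changes nothing. Hence
\[
\sum_{u\neq v}\widetilde{\mathcal{L}}_0(\theta_u,\theta_v)=\p_n(1),\qquad \sum_{u\neq v}\mathcal{L}_0(\xi^{(n)}_u,\xi^{(n)}_v)=\p_n(0),
\]
with $\beta_u(a)=\alpha^{(n)}_u+a\,t_u$ exactly as in the proof of Proposition \ref{PF_LB}.

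\textbf{Taylor expansion.} I will use the expansion \eqref{L0_Taylor}. The identity $\sum_{k=1}^{n-1}\cot(\pi k/n)=0$ gives $\p_n'(0)=0$, as shown there. This leaves
\[
\p_n(1)-\p_n(0)=\int_0^1\p_n''(a)(1-a)\,da,\qquad
\p_n''(a)=\sum_{u\neq v}\frac{(t_u-t_v)^2}{4\sin^2\big(\frac{\beta_u(a)-\beta_v(a)}{2}\big)}.
\]

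\textbf{Lower bound on $\p_n''$.} Use $\sin^2\le 1$ to get $\p_n''(a)\ge \frac14\sum_{u\neq v}(t_u-t_v)^2$. Since $\sum_u t_u=0$ by the choice of $\sigma$,
\[
\sum_{u\neq v}(t_u-t_v)^2=2n\sum_u t_u^2-2\Big(\sum_u t_u\Big)^2=2n\sum_u t_u^2 .
\]
Therefore $\p_n''(a)\ge \frac n2\sum_u t_u^2$ for every $a$. Integrating against $(1-a)\,da$ contributes a factor $\frac12$, which yields exactly $\frac n4\sum_u t_u^2(\theta)$.

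\textbf{Main obstacle: justifying Taylor's theorem on $[0,1]$.} One needs $\beta_u(a)-\beta_v(a)\notin 2\pi\Z\setminus\{0\}$ along the path, for all $u\neq v$. Write
\[
\beta_u(a)-\beta_v(a)=(1-a)(\alpha^{(n)}_u-\alpha^{(n)}_v)+a\big(\theta_{(u)}-\theta_{(v)}\big).
\]
This is a convex combination of two numbers of the same strict sign lying in $(-2\pi,2\pi)$, since both configurations are increasingly ordered with total spread less than $2\pi$ when the points are distinct. Hence it stays in $(-2\pi,2\pi)\setminus\{0\}$ for all $a\in[0,1]$. So $\p_n$ is smooth on $[0,1]$ and the expansion is legitimate.

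The degenerate configurations excluded above can alternatively be handled by this same reasoning as limits, using lower semicontinuity of the left-hand side.
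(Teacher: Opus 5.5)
Your proof is correct and follows the paper's argument: the Taylor expansion \eqref{L0_Taylor} with $\p_n'(0)=0$, then $\sin^2\le 1$ in $\p_n''$, then $\sum_{u\neq v}(t_u-t_v)^2=2n\sum_u t_u^2$ from $\sum_u t_u=0$, and finally the factor $\int_0^1(1-a)\,da=\frac12$. Your extra checks supply details the paper leaves implicit: the reduction to ordered coordinates, the coincident-point case where the left-hand side is $+\infty$, and the fact that $\beta_u(a)-\beta_v(a)$ stays in $(-2\pi,2\pi)\setminus\{0\}$, so that $\p_n$ is smooth on $[0,1]$.
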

\begin{proof}
    Recall from \eqref{L0_Taylor} that
    $$\sum_{1\leq u \neq v \leq n} (\mathcal{L}_0(e^{i\theta_u},e^{i\theta_v})-\mathcal{L}_0(\xi^{(n)}_u, \xi^{(n)}_v)) = \int_0^1 \p_n''(a) (1-a) da,$$
    with
    \begin{align*}
        \p_n''(a) &= \sum_{1\leq u \neq v \leq n} \frac{(t_u-t_v)^2}{4 \sin^2(\frac{\beta_u-\beta_v}{2})},
    \end{align*}
    where $\beta_u(a) = \alpha^{(n)}_u + a t_u(\theta)$ and $t_u(\theta)$ is as in \eqref{t_u_def}. We can then use the estimate
    $$\p_n''(a) \geq \frac{1}{4}\sum_{1\leq u \neq v \leq n} (t_u-t_v)^2 = \frac{1}{4}\sum_{u,v=1}^n (t_u-t_v)^2 = \frac{n}{2} \sum_{u=1}^n t_u^2, $$  
    to conclude that
        $$ \sum_{1\leq u \neq v \leq n} (\mathcal{L}_0(e^{i\theta_u},e^{i\theta_v})-\mathcal{L}_0(\xi^{(n)}_u, \xi^{(n)}_v)) \geq \frac{n}{4} \sum_{u=1}^n t_u^2. $$
    We therefore obtain the desired result.
\end{proof}

As a consequence, we obtain the following information.

\begin{prop} \label{t_u^2}
    Suppose that $z_e\in C^{4,\alpha}(\mathbb{T})$ and $V\in C^{3,\alpha}(\gamma)$. Then for large enough $K\in\R$, there exists $C_4>0$ such that for all $n\in\N$,
        $$\sup_{s\in [0,1]} \sup_{\theta\in E_{n,K}^{[s]}} \sum_{u=1}^n t_u^2(\theta) \leq C_4 . $$
\end{prop}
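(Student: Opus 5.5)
Fix $s\in[0,1]$ and $\theta\in E_{n,K}^{[s]}$. Write $\Delta_0(\theta)=\sum_{u\neq v}(\mathcal{L}_0(e^{i\theta_u},e^{i\theta_v})-\mathcal{L}_0(\xi^{(n)}_u,\xi^{(n)}_v))$. The plan is to combine the upper bound from the definition of $E_{n,K}^{[s]}$ with the lower bounds of Proposition \ref{G_lb} and Proposition \ref{L_lb}, and to make all constants uniform in $s$.

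First, by \eqref{L_s:G}, the quantity in the definition of $E_{n,K}^{[s]}$ splits as
\[
\sum_{u\neq v}\big(\mathcal{L}_s(e^{i\theta_u},e^{i\theta_v})-\mathcal{L}_0(\xi^{(n)}_u,\xi^{(n)}_v)-sG_{0,0}\big)=\Delta_0(\theta)+s\sum_{u\neq v}\big(\mathcal{G}(e^{i\theta_u},e^{i\theta_v})-G_{0,0}\big).
\]
Since $\mathcal{L}_0$ is symmetric under permutation of the points, $\Delta_0(\theta)$ is unchanged if we replace $\theta$ by its ordered coordinates $\theta_{(u)}$. This matches the convention in \eqref{t_u_def}, so Proposition \ref{L_lb} applies to the ordered vector.

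Next, Proposition \ref{G_lb} gives, with $\kappa\in(0,1)$ from Proposition \ref{SGI_UC},
\[
s\sum_{u\neq v}(\mathcal{G}-G_{0,0})\geq -s\kappa^{1/2}\Delta_0(\theta)+sF(n)\geq -\kappa^{1/2}\Delta_0(\theta)-|F(n)|.
\]
The last step needs a short case check. If $\Delta_0\geq0$, it uses $s\leq 1$. If $\Delta_0<0$, it uses $-s\kappa^{1/2}\Delta_0\geq 0\geq-\kappa^{1/2}\Delta_0$ only up to sign, so I will instead note that $\Delta_0\geq0$ always holds by Proposition \ref{L_lb}, which settles this case. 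Combining these with the defining inequality of $E_{n,K}^{[s]}$ gives
\[
(1-\kappa^{1/2})\,\Delta_0(\theta)\leq Kn+|F(n)|\leq (K+C_F)n,
\]
using $F(n)=O(n)$. Taking $K$ large only makes the right-hand side a fixed multiple of $n$; note that $C_F$ does not depend on $s$ or $\theta$. Proposition \ref{L_lb} then yields
\[
\frac n4\sum_u t_u^2(\theta)\leq\Delta_0(\theta)\leq\frac{(K+C_F)n}{1-\kappa^{1/2}},
\]
so one can take $C_4=4(K+C_F)/(1-\kappa^{1/2})$, which is uniform in $s$ and $n$.

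I expect the main obstacle to be bookkeeping rather than new estimates. The point needing care is that Proposition \ref{G_lb} is stated with $\mathcal{L}_0(\xi_u^{(n)},\xi_v^{(n)})$ indexed by $u$ while $z_u$ is unordered. Since the $\xi$ sum is a constant $-n\log n$, this is harmless. One must also make sure that the $F(n)$ from Proposition \ref{G_lb}, and in particular its diagonal term involving $V$ and $\log|z_e^{(1)}|$, is bounded by $C_Fn$ independently of $\theta$, which its proof provides. Finally, the requirement that $K$ be large is only needed so that $E_{n,K}^{[s]}$ is nonempty and matches Proposition \ref{est_D_prop}; the bound itself holds for any $K$.
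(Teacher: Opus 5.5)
Your proof is correct and follows the paper's argument: you split the defining inequality of $E_{n,K}^{[s]}$ via \eqref{L_s:G}, bound the $\mathcal{G}$-part from below with Proposition \ref{G_lb}, and conclude with Proposition \ref{L_lb}. Two steps the paper leaves implicit are made explicit in your version: you use $\Delta_0\geq 0$ (from Proposition \ref{L_lb}) to replace $1-s\kappa^{1/2}$ by $1-\kappa^{1/2}$, and you bound $sF(n)\geq -|F(n)|$; you also correctly keep the factor $s$ in front of $G_{0,0}$, which the paper's proof drops.
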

\begin{proof}
    By definition of $E_{n,K}^{[s]}$, we have
    $$\sum_{1\leq u\neq v \leq n} (\mathcal{L}_s(e^{i\theta_u},e^{i\theta_v})-\mathcal{L}_0(\xi^{(n)}_u, \xi^{(n)}_v)-G_{0,0}) \leq Kn.$$
    It then follows from Proposition \ref{G_lb} that
    $$(1-s\kappa^{\frac{1}{2}}) \sum_{1\leq u \neq v \leq n} (\mathcal{L}_0(e^{i\theta_u},e^{i\theta_v})-\mathcal{L}_0(\xi^{(n)}_u, \xi^{(n)}_v)) + s F(n) \leq Kn, $$
    Therefore, by Proposition \ref{L_lb} and the estimates $1-s\kappa^{\frac{1}{2}}\geq 1-\kappa^{\frac{1}{2}} >0$,
    $$ \frac{n}{4} \sum_{u=1}^n t_u^2(\theta) \leq \frac{Kn-sF(n)}{1-\kappa^{\frac{1}{2}}}.$$
    Since $F(n)=O(n)$ as $n\to\infty$ the desired result follows.
\end{proof}

It leads to the conclusion below, which shows that \eqref{EDI_WC} holds. 

\begin{prop} \label{T_w_conv_rate}
    Suppose that $z_e\in C^{4,\alpha}(\mathbb{T})$ and $V\in C^{3,\alpha}(\gamma)$. Then for large enough $K\in\R$, there exists $C_5>0$ such that for all $f\in C^1(\mathbb{T})$ and $n\in\N$,
        $$ \sup_{s\in [0,1]} \sup_{z^{(n)}\in D_{n,K}^{[s]}} \left| \int f \, d\lambda_{z^{(n)}} \right| \leq C_5 \| f^{(1)} \|_\infty \sqrt{n}.$$
\end{prop}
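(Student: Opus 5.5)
The bound follows from Proposition \ref{t_u^2} by comparing the empirical measure of an arbitrary configuration in $D_{n,K}^{[s]}$ with that of the Fekete points $\xi^{(n)}$. Fix $K$ large enough for Proposition \ref{t_u^2} to apply, and let $z^{(n)}=(e^{i\theta_1},\dots,e^{i\theta_n})\in D_{n,K}^{[s]}$. Since $\int f\,d\lambda_{z^{(n)}}$ is symmetric in the points, we may relabel them in increasing order $\theta_{(1)}\le\dots\le\theta_{(n)}$. We then split
$$\int f\, d\lambda_{z^{(n)}} = \sum_{u=1}^n \big(f(e^{i\theta_{(u)}})-f(e^{i(\alpha_u^{(n)}+\sigma(\theta))})\big) + \Big(\sum_{u=1}^n f(e^{i(\alpha_u^{(n)}+\sigma(\theta))}) - n\int f\,d\mu_{\mathbb{T}}\Big).$$

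The second bracket is a Riemann sum for $f$ over the rotated Fekete points $\xi^{(n)}_u e^{i\sigma(\theta)}$. The measure $\mu_{\mathbb{T}}$ is rotation invariant, so we may apply Proposition \ref{CR_FEK} to the rotated function $f(e^{i\sigma}\,\cdot)$, which has the same derivative bound. This gives a contribution of at most $C_1\|f^{(1)}\|_\infty$, uniformly in $\sigma(\theta)$.

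For the first bracket, the mean value theorem on the circle gives $|f(e^{i\theta_{(u)}})-f(e^{i(\alpha^{(n)}_u+\sigma)})|\le \|f^{(1)}\|_\infty |t_u(\theta)|$, because $\theta_{(u)}-\alpha^{(n)}_u-\sigma(\theta)=t_u(\theta)$ and arc-length distance is bounded by the difference of angles. Cauchy--Schwarz then yields
$$\sum_{u=1}^n |t_u(\theta)| \le \sqrt{n}\Big(\sum_{u=1}^n t_u^2(\theta)\Big)^{1/2}\le \sqrt{C_4 n},$$
where the last inequality is Proposition \ref{t_u^2}, uniformly in $s\in[0,1]$ and $\theta\in E_{n,K}^{[s]}$. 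Combining the two pieces gives the bound with $C_5=\sqrt{C_4}+C_1$, since $1\le\sqrt n$.

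The analytic input is Proposition \ref{t_u^2}, so the argument is short. The only points needing care are bookkeeping. First, $t_u$ is defined through the ordered angles, while $E_{n,K}^{[s]}$ allows arbitrary labelling; the symmetry of the empirical measure resolves this. Second, the shift $\sigma(\theta)$ must be handled, which rotation invariance of $\mu_{\mathbb{T}}$ does. Third, we must check that the $2\pi$-periodicity of angles causes no issue in the mean value estimate; it does not, since we only use $|e^{ia}-e^{ib}|$-type bounds via $|a-b|$.
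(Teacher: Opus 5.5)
Your proof is correct and follows the paper's argument: compare with the Fekete configuration, use Proposition \ref{CR_FEK} for the Riemann-sum part, apply the mean value theorem and Cauchy--Schwarz to the displacements, and conclude with Proposition \ref{t_u^2}. The one difference is that you compare with the rotated points $e^{i\sigma(\theta)}\xi^{(n)}_u$ instead of $\xi^{(n)}_u$, and this is an improvement. The paper's step $\sum_u|\theta_{(u)}-\alpha^{(n)}_u|\le\sqrt{n}\,(\sum_u t_u^2(\theta))^{1/2}$ silently drops $\sigma(\theta)$. Indeed $\sum_u(\theta_{(u)}-\alpha^{(n)}_u)^2=\sum_u t_u^2(\theta)+n\sigma(\theta)^2$, and nothing in the argument bounds $n|\sigma(\theta)|$ by $O(\sqrt{n})$. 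Your use of the rotation invariance of $\mu_{\mathbb{T}}$ makes $\sigma(\theta)$ irrelevant.
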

\begin{proof}
    Observe that
    $$\left| \int f \, d\lambda_{z^{(n)}} \right| \leq \sum_{u=1}^n |f(z_u)-f(\xi_u^{(n)})| + \left| \int f \, d\lambda_{\xi^{(n)}} \right|, $$
    where, by Proposition \ref{CR_FEK}, we have the estimate
    $$\left| \int f \, d\lambda_{\xi^{(n)}} \right| \leq C_1 \| f^{(1)} \|_\infty .$$
    On the other hand, by the mean value theorem and Hölder's inequality,
    $$\sum_{u=1}^n |f(z_{(u)})-f(\xi_u^{(n)})| \leq \| f^{(1)} \|_\infty \sum_{u=1}^n |\theta_{(u)}-\alpha_u^{(n)}| \leq \| f^{(1)} \|_\infty \sqrt{n} \left(\sum_{u=1}^n t_u^2(\theta)\right)^{\frac{1}{2}},$$
    where $z^{(n)}=(e^{i\theta_1},\dots,e^{i\theta_n})$ and $z_{(u)}=e^{i\theta_{(u)}}$. It then remains to use Proposition \ref{t_u^2}.
\end{proof}

At this point, we can show the following, which proves Proposition \ref{LS_lim_s_proof}~\ref{est_V}.

\begin{prop} \label{LS_const}
    Suppose that $z_e\in C^{7,\alpha}(\mathbb{T})$ and that $V\in C^{6,\alpha}(\gamma)$. Let $g\in C^{4,\alpha}(\mathbb{T})$ with $\int g d\mu_{\mathbb{T}}=0$ and let
    $h_s:\mathbb{T}\to\R$ be the solution of the master equation \eqref{ME} given $g$. Then for large enough $K\in\R$, there exists $C(\|g\|_{4,\alpha})>0$ such that for all $n\in\N$,
        $$  \sup_{s\in [0,1]} \sup_{z^{(n)}\in E_{n,K}^{[s]}} \left| \frac{1}{n} \iint [h_s(u) \partial_1 +  h_s(v)\partial_2] \mathcal{L}_s(u,v) d\lambda_{z^{(n)}}(u)d\lambda_{z^{(n)}}(v) \right| \leq C(\|g\|_{4,\alpha}).$$
\end{prop}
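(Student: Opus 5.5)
We split the kernel $[h_s(u)\partial_1+h_s(v)\partial_2]\mathcal{L}_s(u,v)$ according to \eqref{L_s:G} into a log-kernel part and a Grunsky part:
$$[h_s(u)\partial_1+h_s(v)\partial_2]\mathcal{L}_0(u,v)+s\,[h_s(u)\partial_1+h_s(v)\partial_2]\mathcal{G}(u,v).$$
We bound the double integral against $\lambda_{z^{(n)}}\otimes\lambda_{z^{(n)}}$ for each part separately.

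\textbf{Regularity of $h_s$ and the Grunsky part.} By Proposition \ref{reg_h_s} with $m=4$, which is available since $z_e\in C^{7,\alpha}$ and $V\in C^{6,\alpha}$, we get $|\hat h_{s,k}|\le c\|g\|_{4,\alpha}|k|^{-4-\alpha}$ uniformly in $s$. Hence $h_s\in C^{2}$ with $\|h_s\|_{C^2}\le C\|g\|_{4,\alpha}$.

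For the Grunsky part we expand $\mathcal{G}-G_{0,0}=\sum_{k,l}G_{k,l}\phi_k(u)\phi_l(v)$. The double integral is then
$$\sum_{k,l}G_{k,l}\Big(\int h_s\phi_k^{(1)}d\lambda\int\phi_l\,d\lambda+\int\phi_k\,d\lambda\int h_s\phi_l^{(1)}d\lambda\Big).$$
By Proposition \ref{T_w_conv_rate}, each factor $|\int f\,d\lambda_{z^{(n)}}|$ is at most $C_5\|f^{(1)}\|_\infty\sqrt n$. This gives the bound
$$C n\,\|h_s\|_{C^1}\sum_{k,l}|G_{k,l}|\,(|k|^2|l|+|k||l|^2).$$
By Proposition \ref{reg_G}, the series converges provided $p+q=m+\alpha-1$ exceeds the needed decay, namely exponents $>3$ on one index and $>2$ on the other. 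With $z_e\in C^{7,\alpha}$ we may take $p=3+\alpha/2$ and $q=2+\alpha/2$, which sum to $5+\alpha\le 6+\alpha$, so this is fine. Dividing by $n$ gives an $O(1)$ contribution.

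\textbf{The log-kernel part.} For $\mathcal{L}_0$ on $\mathbb{T}$, in the angle variables, we have
$$[h(u)\partial_u+h(v)\partial_v]\mathcal{L}_0=-\tfrac12\cot\!\big(\tfrac{u-v}{2}\big)\,(h(u)-h(v)).$$
This is a bounded kernel that is smooth off the diagonal but not $C^1$ across it, so the term-by-term bound fails. We use instead the Fourier representation $\mathcal{L}_0=\sum_k\frac{1}{|k|}\cdots$ of the previous example. The kernel becomes
$$\sum_{k\ge1}\frac1k\,\mathrm{Re}\big[(h(u)ik e^{iku})e^{-ikv}+e^{iku}(h(v)(-ik)e^{-ikv})\big],$$
that is, $\sum_k \mathrm{Re}[\,i(h(u)-h(v))e^{ik(u-v)}]$. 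Writing $h(u)=\sum_j\hat h_je^{iju}$, the double integral becomes
$$\sum_{k\ge1}\sum_j \hat h_j\,\mathrm{Re}\big[i(\Lambda_{k+j}\overline{\Lambda_k}-\Lambda_k\overline{\Lambda_{k-j}})\big],\qquad \Lambda_k=\int e^{iku}d\lambda_{z^{(n)}}.$$
After re-indexing, this telescopes to a finite sum of $|j|$ terms of the form $\Lambda_a\overline{\Lambda_b}$ with $a,b$ in a window of length $|j|$ near $0$. This is the standard cancellation, as in \cite{CourteautJohansson2025}. Each such term satisfies $|\Lambda_a|\le C|a|\sqrt n$ by Proposition \ref{T_w_conv_rate}, and trivially $|\Lambda_a|\le 2n$. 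The contribution of frequency $j$ is therefore at most $C|j|\cdot|j|^2 n\,|\hat h_j|$. Since $|\hat h_j|\lesssim|j|^{-4-\alpha}$, the sum over $j$ converges, the total is $O(n)$, and dividing by $n$ gives a bound $C(\|g\|_{4,\alpha})$.

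\textbf{Main obstacle.} The delicate step is this log-kernel part: carrying out the telescoping exactly (handling the sign of $j$ and the real/imaginary parts), and checking that only $O(|j|)$ boundary terms with frequencies $\le|j|$ survive, so that the $\sqrt n$ bound of Proposition \ref{T_w_conv_rate} yields order $n$ rather than $n^2$. If the telescoping leaves frequencies up to $\infty$, I would fall back to the alternative representation $h(u)-h(v)=(u-v)\int_0^1h'$. This cancels the cot singularity into a $C^{1}$ kernel. I would then expand that kernel in a double Fourier series with summable $|k||l|\cdot$coefficients, again using the regularity $h_s\in C^{3}$ coming from $g\in C^{4,\alpha}$.
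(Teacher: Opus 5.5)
The Grunsky part of your proposal is the paper's argument. Your final estimate for the log part, $Cn\sum_j |j|^3|\hat h_{s,j}|$ via Proposition \ref{T_w_conv_rate} and Proposition \ref{reg_h_s} with $m=4$, is also exactly the paper's. The gap is in how you obtain the finite window. The identity you write,
\[
\iint [h(u)\partial_1+h(v)\partial_2]\mathcal{L}_0\,d\lambda_{z^{(n)}}\,d\lambda_{z^{(n)}}=\sum_{k\geq 1}\mathrm{Re}\Big[i\sum_j \hat h_j\big(\Lambda_{k+j}\overline{\Lambda_k}-\Lambda_k\overline{\Lambda_{k-j}}\big)\Big],
\]
is false, for two reasons.
\begin{itemize}
\item The measure $\lambda_{z^{(n)}}\otimes\lambda_{z^{(n)}}$ has atoms $\delta_{(z_u,z_u)}$ on the diagonal. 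There the kernel takes its continuous value $-h^{(1)}(z_u)$, which is the value produced by the Taylor expansion of $\mathcal{W}_1$. Every term $\mathrm{Re}[i(h(u)-h(v))e^{ik(u-v)}]$ of your series, however, vanishes on the diagonal.
\item Your $\Lambda_k=\sum_u z_u^k$ does not decay in $k$. So the $k$-series need not converge, and the boundary term $\sum_{k=N+1}^{N+j}\Lambda_k\overline{\Lambda_{k-j}}$ of your telescoping does not tend to zero.
\end{itemize}
A concrete check: take $n=1$, $z_1=1$, $h(e^{i\theta})=\sin\theta$. The kernel is then exactly $-\tfrac12\,\mathrm{Re}(u+v)$, so its integral against $\lambda\otimes\lambda$ is $0$ because $\lambda(\mathbb{T})=0$. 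Your series instead equals $1$, since only $k=1$ contributes. The discrepancy is precisely the diagonal atom $h'(0)=1$. The window you end up with after dropping the boundary term is in fact the correct answer, but only because two errors cancel; the telescoping does not prove it.

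The fix, which is the paper's proof, is to telescope on the kernel itself rather than after integrating. Contrary to your remark, the kernel is smooth across the diagonal:
\[
[h(u)\partial_1+h(v)\partial_2]\mathcal{L}_0(u,v)=\mathrm{Im}\,\frac{uh(u)-vh(v)}{u-v}=\mathrm{Im}\sum_j\hat h_j\,\frac{u^{j+1}-v^{j+1}}{u-v}.
\]
Each quotient is a finite geometric sum, e.g. $\sum_{l=0}^{j}u^{j-l}v^{l}$ for $j\geq 0$. This is a pointwise identity of Laurent polynomials on all of $\mathbb{T}^2$, diagonal included. Integrating this finite sum against $\lambda\otimes\lambda$ is therefore legitimate. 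The extreme terms drop out since $\lambda(\mathbb{T})=0$, and what remains is your window of at most $|j|$ products of moments of frequency at most $|j|$. Your bound then goes through unchanged.

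Your fallback would not rescue the argument as stated. $C^1$, or even $C^{2,\alpha}$, regularity of a kernel on $\mathbb{T}^2$ does not by itself give $\sum_{a,b}|c_{a,b}|\,|a|\,|b|<\infty$. What makes this sum finite here is the Hankel structure of the kernel's coefficients: the coefficient of $u^a v^b$ for $a,b\geq 1$ is $-i\hat h_{a+b}$. That structure is once more the geometric-sum identity.
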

\begin{proof}
Recall that $\mathcal{L}_s(u,v)=\mathcal{L}_0(u,v) + s \mathcal{G}(u,v)$ and denote
\begin{align*}
    \mathcal{F}_1(u,v) = [h_s(u) \partial_1 +  h_s(v)\partial_2] \mathcal{L}_0(u,v), \quad
    \mathcal{F}_2(u,v) = [h_s(u) \partial_1 +  h_s(v)\partial_2] \mathcal{G}(u,v).
\end{align*}
We will first focus on $\mathcal{F}_1$. We have,
$$\mathcal{F}_1(u,v) = \IP{\frac{uh_s(u)-vh_s(v)}{u-v}},$$
After plugging in the Fourier series $h_s(u) = \sum_{|k|\geq 1} \hat{h}_{s,k} u^k $, this becomes
\begin{align*}
    \mathcal{F}_1(u,v) = 2\, \IP{\sum_{k\geq 1} \hat{h}_{s,k} \frac{u^{k+1}-v^{k+1}}{u-v}} =2\, \IP{\sum_{k\geq 1} \hat{h}_{s,k} \sum_{l=0}^{|k|} u^{\operatorname{sgn}(k)(k-l)} v^{\operatorname{sgn}(k)l}}.
\end{align*}
Hence,
\begin{align} \label{F_1_ini_est}
        &\left|\iint \mathcal{F}_1(u,v) d\lambda_{z^{(n)}}(u)d\lambda_{z^{(n)}}(v)\right| \notag \\
        &\qquad\leq 2\sum_{|k|\geq 1} |\hat{h}_{s,k}| \sum_{l=0}^{|k|} \left|\int u^{\operatorname{sgn}(k)(k-l)} d\lambda_{z^{(n)}}(u)\right| \left|\int v^{\operatorname{sgn}(k)l} d\lambda_{z^{(n)}}(v)|\right|,
\end{align}
and thus by Proposition \ref{T_w_conv_rate}, there exists $C_1>0$ such that
$$ \sup_{s\in [0,1]} \sup_{z^{(n)}\in E_{n,K}^{[s]}} \left|\iint \mathcal{F}_1(u,v) d\lambda_{z^{(n)}}(u)d\lambda_{z^{(n)}}(v)\right| \leq C_1 n \sum_{|k|\geq 1} |\hat{h}_{s,k}| \sum_{l=0}^{|k|} (|k|-l) l. $$
This allows us to conclude that
$$\sup_{s\in [0,1]} \sup_{z^{(n)}\in E_{n,K}^{[s]}} \left|\iint \mathcal{F}_1(u,v) d\lambda_{z^{(n)}}(u)d\lambda_{z^{(n)}}(v)\right| \leq C_1 n \sum_{k\geq 1} |\hat{h}_{s,k}| |k|^3 .$$
By Proposition \ref{reg_h_s}, the latter remains bounded whenever $g \in C^{4,\alpha}$, $z_e\in C^{7,\alpha}$ and $V\in C^{6,\alpha}$.

We will now consider $\mathcal{F}_2$. By symmetry, we have
$$\iint \mathcal{F}_2(u,v) d\lambda_{z^{(n)}}(u)d\lambda_{z^{(n)}}(v) = 2 \iint h_s(u) \mathcal{G}^{(1,0)}(u,v) d\lambda_{z^{(n)}}(u)d\lambda_{z^{(n)}}(v). $$
Hence,
\begin{equation} \label{F_2_ini_est}
    \left|\iint \mathcal{F}_2(u,v) d\lambda_{z^{(n)}}(u)d\lambda_{z^{(n)}}(v) \right| \leq 2 \sum_{|k|,|l|\geq 1} |G_{k,l}| \left|\int h_s \phi_k^{(1)} d\lambda_{z^{(n)}} \right| \left|\int \phi_l d\lambda_{z^{(n)}} \right|,
\end{equation}
and an application of Proposition \ref{T_w_conv_rate} gives $C_2>0$ such that
$$\sup_{s\in [0,1]} \sup_{z^{(n)}\in E_{n,K}^{[s]}} \left|\iint \mathcal{F}_2(u,v) d\lambda_{z^{(n)}}(u)d\lambda_{z^{(n)}}(v) \right| \leq C_2 n \sum_{|k|,|l|\geq 1} |G_{k,l}| \|(h_s \phi_k^{(1)})^{(1)}\|_{\infty} \|\phi_l^{(1)}\|_{\infty}. $$
Since $h_s \in C^1$, there then exists $c_1>0$ such that
$$\sup_{s\in [0,1]} \sup_{z^{(n)}\in E_{n,K}^{[s]}} \left|\iint \mathcal{F}_2(u,v) d\lambda_{z^{(n)}}(u)d\lambda_{z^{(n)}}(v) \right| \leq C_2 c_1 n \sum_{|k|,|l|\geq 1} |G_{k,l}| k^2 |l|. $$
By Proposition \ref{reg_G}, the latter remains bounded whenever $z_e\in C^{6,\alpha}$ and $V\in C^{5,\alpha}$.
\end{proof}

\begin{rem}
    The above is the bottleneck for the condition $z_e \in C^{7,\alpha}(\mathbb{T})$ in Theorem \ref{av_s}.
\end{rem}

In order to prove that Proposition~\ref{lim_E_V} is satisfied, we require the following intermediate result.

\begin{prop} \label{s_LS_const}
    Suppose that $z_e\in C^{7,\alpha}(\mathbb{T})$ and $V\in C^{6,\alpha}(\gamma)$. Let $g\in C^{4,\alpha}(\mathbb{T})$ with $\int g d\mu_{\mathbb{T}}=0$. Then there exists $C(\|g\|_{4,\alpha})>0$ such that for all $n\in\N$,
        $$ \sup_{s\in [0,1]} \mathbb{E}_{n,\beta}^{[s]}\left[e^{\sum_{k=1}^n g(z_k)}\right] \leq C(\|g\|_{4,\alpha}).$$
\end{prop}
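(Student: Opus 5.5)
We prove the bound by the same change of variables $t_k\mapsto t_k+\frac1n\widetilde{h}_s(t_k)$ that underlies Theorem \ref{av_s}, and we split the expectation along the essential domain exactly as in the proof of Proposition \ref{LS_lim_s_proof}. The idea is that only the boundedness part of that argument is needed, not the convergence part \ref{lim_E_V}.

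The representation \eqref{LS_V_s} gives
$$\mathbb{E}_{n,\beta}^{[s]}\big[e^{\sum_k g(z_k)}\big]=\widetilde{\mathbb{E}}_{n,\beta}^{[s]}\big[e^{\widetilde{\mathcal V}^{[s]}_{g,h_s,\frac1n}}\mathds 1_{E^{[s]}_{n,K}}\big]+\widetilde{\mathbb{E}}_{n,\beta}^{[s]}\big[e^{\widetilde{\mathcal V}^{[s]}_{g,h_s,\frac1n}}\mathds 1_{(E^{[s]}_{n,K})^c}\big].$$

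For the second term we use the global bound \eqref{est_V_glob_uni}. With $c_3=c_3(\|g\|_{2,\alpha},\|h_s\|_{1,\alpha})$ it gives $|\widetilde{\mathcal V}|\le c_3 n$ uniformly in $s$. Proposition \ref{est_D_prop} gives $\mathbb{P}[(D^{[s]}_{n,K})^c]\le e^{C_2n-\frac\beta2Kn}$. We choose $K$ so large that $\frac\beta2K-C_2>c_3$. The second term is then at most $e^{(c_3+C_2-\frac\beta2K)n}\le1$, uniformly in $s\in[0,1]$.

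To make $c_3$ depend only on $\|g\|_{4,\alpha}$, we bound $\sup_s\|h_s\|_{1,\alpha}$ by Proposition \ref{reg_h_s}. Since $z_e\in C^{7,\alpha}$ and $V\in C^{6,\alpha}$, we may take $m=4$ there, which gives $|\hat h_{s,k}|\le c\|g\|_{4,\alpha}|k|^{-4-\alpha}$ uniformly in $s$. Hence $\|h_s\|_{C^{2,\alpha}}\le C\|g\|_{4,\alpha}$ uniformly in $s$, and this also controls the constants in Proposition \ref{Taylor_err}.

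For the first term we show that $\widetilde{\mathcal V}$ is bounded above uniformly on $E^{[s]}_{n,K}$. We use the decomposition derived in Section \ref{S_der_V}, which combines Proposition \ref{Taylor_err}, the choice $h=h_s$ solving \eqref{ME}, and the cancellation of the linear $\lambda_{z^{(n)}}$-terms. It writes $\widetilde{\mathcal V}^{[s]}_{g,h_s,\frac1n}$ as a sum of four pieces:
\begin{enumerate}[label=(\roman*)]
\item $-\frac{\beta}{2n}\iint[h_s\partial_1+h_s\partial_2]\mathcal L_s\,d\lambda\,d\lambda$;
\item $-\frac\beta4\iint[\cdot]^2\mathcal L_s\,d\mu_{z^{(n)}}d\mu_{z^{(n)}}$;
\item $\int(h_sg^{(1)}+m_s)\,d\mu_{z^{(n)}}$;
\item a Taylor remainder, which is $O(1)$ by Proposition \ref{Taylor_err}.
\end{enumerate}
Piece (i) is bounded by $C(\|g\|_{4,\alpha})$ on $E^{[s]}_{n,K}$ by Proposition \ref{LS_const}. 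Piece (iii) is bounded by $\|h_sg^{(1)}+m_s\|_\infty$, which is controlled by $\|g\|_{4,\alpha}$. For piece (ii), the integrand $[h_s\partial_1+h_s\partial_2]^2\mathcal L_s$ is bounded. For the $\mathcal L_0$ part this follows from the explicit formula: it is the second $\varepsilon$-derivative of $Q_\varepsilon$, namely $\frac14(h_s(u)-h_s(v))^2/\sin^2\frac{u-v}2$ plus a term $\mathrm{Im}$ of a difference quotient, both bounded by $C\|h_s\|_{C^2}^2$. For the $\mathcal G$ part it follows because $\mathcal G\in C^{2}$. Since $\mu_{z^{(n)}}$ is a probability measure, piece (ii) is bounded by the sup norm of the integrand. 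All these bounds are uniform in $s$.

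Hence $\widetilde{\mathcal V}\le C'(\|g\|_{4,\alpha})$ on $E^{[s]}_{n,K}$, and the first term is at most $e^{C'}$. Adding the two estimates gives the claim with $C=e^{C'}+1$. For the finitely many $n<n_0$ excluded by Proposition \ref{Taylor_err}, the trivial bound $e^{n\|g\|_\infty}$ suffices.

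The main obstacle is the uniformity in $s$ of every constant. This hinges on Proposition \ref{reg_h_s}, via $\|(I+sK)^{-1}\|\le(1-\kappa)^{-1}$, and on the $s$-uniform essential-domain estimates of Propositions \ref{est_D_prop}, \ref{t_u^2} and \ref{LS_const}. It also requires that $K$ is fixed independently of $s$, which holds because $c_3$ is $s$-uniform.
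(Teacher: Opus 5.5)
Your proposal is correct and follows the paper's proof. You use the same split along $E^{[s]}_{n,K}$ and handle the complement with \eqref{est_V_glob_uni} and Proposition \ref{est_D_prop}, with $K$ chosen large in terms of $\|g\|_{4,\alpha}$ via Proposition \ref{reg_h_s}. You then control the essential part through the Taylor decomposition of Section \ref{S_der_V}, with Proposition \ref{LS_const} bounding the $\lambda_{z^{(n)}}\otimes\lambda_{z^{(n)}}$ term. The differences are minor: you bound the remaining $O(1)$ terms directly by sup norms, since $\mu_{z^{(n)}}$ is a probability measure, whereas the paper first replaces $\mu_{z^{(n)}}$ by $\mu_{\mathbb{T}}$ via \eqref{EDI_WC} (Proposition \ref{T_w_conv_rate}), which an upper bound does not need; you also make the treatment of $n<n_0$ and the $s$-uniformity of all constants explicit.
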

\begin{proof}
    We will argue similarly as in the proof of Proposition \ref{LS_lim_s_proof}. It follows from \eqref{est_V_glob_uni} and Proposition \ref{reg_h_s} that
        $$\mathbb{E}_{n,\beta}^{[s]}\left[e^{\sum_{k=1}^n g(z_k)}\right] \leq \widetilde{\mathbb{E}}_{n,\beta}^{[s]}\left[e^{\widetilde{\mathcal{V}}_{g,h,\frac{1}{n}}^{[s]}(t)} \mathds{1}_{E_{n,K}^{[s]}}(t) \right] + e^{c_3(\|g\|_{2,\alpha}) n} \widetilde{\mathbb{E}}_{n,\beta}^{[s]}\left[\mathds{1}_{(E_{n,K}^{[s]})^c}(t)\right].$$
    By Proposition \ref{est_D_prop}, we have
        $$ \sup_{s\in[0,1]} \widetilde{\mathbb{E}}_{n,\beta}^{[s]}\left[\mathds{1}_{(E_{n,K}^{[s]})^c}(t)\right] \leq e^{C_3n-\frac{\beta}{2}Kn}.$$
    Hence for large enough $K$, the second term is $o(1)$ as $n\to\infty$, uniformly in $s\in[0,1]$ and $\|g\|_{2,\alpha}$. We can now use \eqref{V_form}, which says that 
        $$ \mathcal{V}_{g,h_s,\frac{1}{n}}(z^{(n)}) = \mathcal{V}_{g,h_s} - \frac{\beta}{2n} \iint [h_s(u) \partial_u +  h_s(v)\partial_v] \mathcal{L}_s(u,v) d\lambda_{z^{(n)}}(u)d\lambda_{z^{(n)}}(v) + o(1), $$
    where $o(1)$ is uniform in $s\in[0,1]$ and $\|g\|_{2,\alpha}$. Indeed, this holds due to  Proposition~\ref{Taylor_err} and the assumption \eqref{EDI_WC}, which is Proposition \ref{T_w_conv_rate}. The desired result then follows from Proposition~\ref{LS_const}.
\end{proof}

In its turn this allows us to show that \eqref{err_0} holds. As a consequence, Proposition~\ref{lim_E_V} is proven.

\begin{prop}
    Suppose that $z_e\in C^{7,\alpha}(\mathbb{T})$ and that
    $V\in C^{6,\alpha}(\gamma)$. Let $g\in C^{4,\alpha}(\mathbb{T})$ with
    $\int g d\mu_{\mathbb{T}}=0$ and let
    $h_s:\mathbb{T}\to\R$ be the solution of the master equation
    \eqref{ME} given $g$. Then for large enough $K\in\R$, we have
    \begin{equation} \label{T_err_0}
    \lim_{n\to\infty}\sup_{s\in[0,1]}
    \mathbb{E}_{n,\beta}^{[s]}\left[
        \frac{1}{n}
        \left|\iint
        [h_s(u)\partial_u+h_s(v)\partial_v]\mathcal{L}_s(u,v)
        d\lambda_{z^{(n)}}(u)d\lambda_{z^{(n)}}(v)\right|
        \mathds{1}_{D_{n,K}^{[s]}}(z^{(n)})
    \right]=0.
    \end{equation}
\end{prop}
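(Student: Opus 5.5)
Our plan is to upgrade the uniform bound of Proposition~\ref{LS_const}, which gives an $O(1)$ estimate for the quantity inside the expectation on $D_{n,K}^{[s]}$, to an $o(1)$ estimate \emph{in expectation}. We will use the exponential moment bound of Proposition~\ref{s_LS_const} applied to rescaled test functions.

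\textbf{Step 1: reduce to bilinear Fourier-type sums.} Following the proof of Proposition~\ref{LS_const}, we split into $\mathcal{F}_1$ and $\mathcal{F}_2$ and use the bounds \eqref{F_1_ini_est} and \eqref{F_2_ini_est}. This dominates the quantity by
\[
\frac{2}{n}\sum_{|k|\geq1}|\hat h_{s,k}|\sum_{l=0}^{|k|}|X_{k-l}||X_l| \;+\; \frac{2}{n}\sum_{|k|,|l|\geq1}|G_{k,l}|\,|Y_k|\,|X'_l|,
\]
where each $X,Y,X'$ is of the form $\int f\,d\lambda_{z^{(n)}}$ for explicit smooth $f$ (a monomial, $h_s\phi_k^{(1)}$, or $\phi_l$). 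By Cauchy--Schwarz, $\mathbb{E}[|X||Y|\mathds{1}]\leq \mathbb{E}[X^2\mathds 1]^{1/2}\,\mathbb{E}[Y^2\mathds 1]^{1/2}$. So it suffices to show that
\[
\sup_{s}\ \mathbb{E}_{n,\beta}^{[s]}\Big[\Big(\int f\,d\lambda_{z^{(n)}}\Big)^2\Big]\leq C\,\|f\|_{4,\alpha}^2 .
\]
Any polynomial growth in the frequency is harmless: the coefficient decay from Proposition~\ref{reg_h_s} and Proposition~\ref{reg_G} absorbs it, as in Proposition~\ref{LS_const}. Given this second-moment bound, the total expectation is $O(1/n)$, hence $o(1)$ uniformly in $s$.

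\textbf{Step 2: second moments from exponential moments.} Note that $\int f\,d\lambda_{z^{(n)}}=\sum_k f(z_k)-n\int f\,d\mu_{\mathbb T}$. For real $f$ with $\|f\|_{4,\alpha}\leq 1$, apply Proposition~\ref{s_LS_const} to $\pm f$ after subtracting the mean; the constant depends only on the $C^{4,\alpha}$ norm. This gives $\mathbb{E}[e^{\pm\int f d\lambda}]\leq C$, and then $x^2\leq 2(e^{x}+e^{-x})$ yields $\mathbb{E}[(\int f\,d\lambda)^2]\leq 4C$. For general $f$, we rescale by $\|f\|_{4,\alpha}$, splitting complex monomials into real and imaginary parts. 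We must check that the constant in Proposition~\ref{s_LS_const} is locally bounded in $\|g\|_{4,\alpha}$; this can be read off from its proof, since all constants there depend on $\|g\|_{4,\alpha}$ through $\|h_s\|$ via Proposition~\ref{reg_h_s}.

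\textbf{Step 3: sum up.} We bound $\|z^{m}\|_{4,\alpha}\lesssim |m|^{5}$, $\|h_s\phi_k^{(1)}\|_{4,\alpha}\lesssim |k|^{6}$ and $\|\phi_l\|_{4,\alpha}\lesssim|l|^{5}$. The resulting series are then $\sum_k|\hat h_{s,k}||k|^{11}$ and $\sum|G_{k,l}||k|^6|l|^5$. These are \textbf{too large} for the assumed regularity, so this crude route is the main obstacle.

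To fix this, we would \emph{not} expand in Fourier modes when applying Step~2. Instead we treat the bilinear forms directly: by Proposition~\ref{T_w_conv_rate}, on $D_{n,K}^{[s]}$ we have $|\int f d\lambda|\leq C\|f^{(1)}\|_\infty\sqrt n$. We then interpolate $|X||Y|\leq |X|\cdot C\|\cdot\|\sqrt n$, which leaves only first moments $\mathbb{E}|X|$. These are bounded via Step~2 by norms with a smaller loss (the $C^{4,\alpha}$ norm could be lowered by redoing Proposition~\ref{s_LS_const} with $C^{1}$ dependence through $h_s$). This gives $\frac1n\cdot\sqrt n\cdot O(1)=O(n^{-1/2})\to0$, provided the frequency sums converge under $z_e\in C^{7,\alpha}$. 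This would be verified using the $\max\{|k|,|l|\}$ decay in Proposition~\ref{reg_G}.
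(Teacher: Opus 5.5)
\textbf{The gap is in Step 3: your proposed fix does not close.} Steps 1--2 are sound. Rescaling Proposition~\ref{s_LS_const} to get $\mathbb{E}|\int f\,d\lambda_{z^{(n)}}|\lesssim \|f\|_{4,\alpha}$ is legitimate, and you correctly see that the second-moment route diverges. But the fix has the same problem. Pairing that first-moment bound with the deterministic bound from Proposition~\ref{T_w_conv_rate} leaves a frequency loss of $j^{4+\alpha}$ for a mode of frequency $j$.

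\begin{itemize}
\item For $\mathcal{F}_1$ you get $\sum_k|\hat h_{s,k}|\sum_{j\le k} j\,(k-j)^{4+\alpha}\sim\sum_k k^{-4-\alpha}k^{6+\alpha}$, which diverges.
\item For $\mathcal{F}_2$ you would need $\sum_{k,l}|G_{k,l}|k^2|l|^{4+\alpha}<\infty$. That requires $|G_{k,l}|\lesssim|k|^{-p}|l|^{-q}$ with $p+q>8+\alpha$. Proposition~\ref{reg_G} with $z_e\in C^{7,\alpha}$ only gives $p+q=6+\alpha$.
\end{itemize}

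Your remedy, redoing Proposition~\ref{s_LS_const} with a $C^1$-type dependence on the test function, would indeed make both sums converge. However, it is exactly the unproven step, and the paper's machinery does not supply it. The constant in Proposition~\ref{s_LS_const} comes through Proposition~\ref{LS_const}, which needs $\sum_k|\hat h_{s,k}||k|^3<\infty$, hence roughly $C^{4,\alpha}$ control of $g$ via Proposition~\ref{reg_h_s}. It also comes through $\|g\|_{2,\alpha}$ in the Taylor and global bounds. A uniform-in-$n$ first-moment bound of order $j$ for frequency-$j$ modes is essentially as strong as what is being proved.

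\textbf{The missing idea is that no quantitative frequency dependence of the first-moment constants is needed; truncate instead.} Fix a cutoff $N$.
\begin{itemize}
\item For the finitely many low frequencies, bound one factor by $\sup_{n,s}\mathbb{E}|\int f\,d\lambda_{z^{(n)}}|\le C_j$, with $C_j$ arbitrary. Bound the other factor on $D_{n,K}^{[s]}$ by the deterministic $Cj\sqrt n$ from Proposition~\ref{T_w_conv_rate}. After dividing by $n$ this is $O_N(n^{-1/2})$.
\item For the high frequencies, use the deterministic $\sqrt n$ bound on both factors. After dividing by $n$, this is bounded by the tail beyond $N$ of the series already shown convergent in Proposition~\ref{LS_const}. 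That is $\sum_{k>N}k^{-1-\alpha}$ for $\mathcal{F}_1$, and $\sum_{|l|>N}\sum_k|G_{k,l}|k^2|l|$ for $\mathcal{F}_2$ after splitting in $l$.
\end{itemize}
Letting $n\to\infty$ first and then $N\to\infty$ gives the limit $0$. This is why the statement is an $o(1)$ result rather than the $O(n^{-1/2})$ rate your argument aims for.
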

\begin{proof}
We will use Proposition \ref{s_LS_const} to refine the estimates
from Proposition \ref{LS_const}. If $f\in C^{4,\alpha}(\mathbb{T})$ and
$\int f d\mu_{\mathbb{T}}=0$, Proposition \ref{s_LS_const},
applied to $\pm f$, and the inequality
$|x|\leq e^x+e^{-x}$ yield
    $$ \sup_{n\in\N}\sup_{s\in[0,1]}
    \mathbb{E}_{n,\beta}^{[s]}
        \left|\int f d\lambda_{z^{(n)}}\right|
    <\infty. $$
Applying this to the real and imaginary parts of $u\mapsto u^{\pm j}$
and to $\phi_{\pm}$, shows that for every $j\in\Z_{\geq 1}$,
there exists $C_j>0$ such that
\begin{equation} \label{c_j_est}
    \sup_{n\in\N}\sup_{s\in[0,1]}
    \max\left\{
        \mathbb{E}_{n,\beta}^{[s]}\left|\int u^{\pm j} d\lambda_{z^{(n)}}(u)\right|,
        \mathbb{E}_{n,\beta}^{[s]}\left|\int \phi_{\pm j} d\lambda_{z^{(n)}}\right|   \right\}\leq C_j.
\end{equation}
On the other hand, by Proposition \ref{T_w_conv_rate}, there exists $c>0$ such that for all $n\in\N$,
\begin{equation} \label{cj_sqrtn_est}
    \sup_{s\in [0,1]} \sup_{z^{(n)}\in D_{n,K}^{[s]}} \max\left\{\left|\int u^{\pm j} d\lambda_{z^{(n)}}(u)\right|, \left|\int \phi_{\pm j} d\lambda_{z^{(n)}}\right|\right\}\leq C j \sqrt{n}.
\end{equation}
We will first consider $\mathcal{F}_1$. Following the proof of Proposition \ref{LS_const}, we have
\begin{align*}
    \mathcal{F}_1(u,v) = 2\, \IP{\sum_{k\geq 1} \hat{h}_{s,k} \sum_{j=0}^{|k|} u^{\operatorname{sgn}(k)(k-j)} v^{\operatorname{sgn}(k)j}}.
\end{align*}
Let $N\in\Z_{\geq 1}$. For $k\in\{1,\dots,N\}$, we will apply \eqref{c_j_est} to $u^{\operatorname{sgn}(k)(k-j)}$ and \eqref{cj_sqrtn_est} to $v^{\operatorname{sgn}(k)j}$. For $k>N$, we will apply \eqref{cj_sqrtn_est} to both factors. Doing so, we obtain
\begin{align*}
    & \mathbb{E}_{n,\beta}^{[s]}\left[
        \left|\iint
        \mathcal{F}_1(u,v)
        d\lambda_{z^{(n)}}(u)d\lambda_{z^{(n)}}(v)\right|
        \mathds{1}_{D_{n,K}^{[s]}}(z^{(n)})
    \right] \\
    &\qquad\leq
        2C\sqrt{n}\sum_{k=1}^{N} |\hat{h}_{s,k}|
            \sum_{j=1}^{k-1}j C_{k-j}
        +2C^2n\sum_{k>N}|\hat{h}_{s,k}|\sum_{j=1}^{k-1}j(k-j).
\end{align*}
By Proposition \ref{reg_h_s}, there exists $c_1>0$ such that
    $$ \sup_{s\in[0,1]}|\hat{h}_{s,k}|
        \leq \frac{c_1}{k^{4+\alpha}},\quad k\geq 1. $$
Therefore,
\begin{align*}
    & \sup_{s\in[0,1]} \mathbb{E}_{n,\beta}^{[s]}\left[
        \left|\iint
        \mathcal{F}_1(u,v)
        d\lambda_{z^{(n)}}(u)d\lambda_{z^{(n)}}(v)\right|
        \mathds{1}_{D_{n,K}^{[s]}}(z^{(n)})
    \right] \\
    &\qquad\leq
        2Cc_1\sqrt{n}\sum_{k=1}^{N}
            \sum_{j=1}^{k-1}j C_{k-j}
        +2C^2c_1n\sum_{k>N}\frac{1}{k^{1+\alpha}}.
\end{align*}
Dividing by $n$, the first term tends to $0$ as $n\to\infty$. We can then let $N\to\infty$ to conclude that
    $$ \lim_{n\to\infty}\sup_{s\in[0,1]}
    \mathbb{E}_{n,\beta}^{[s]}\left[ \frac{1}{n}
        \left|\iint
        \mathcal{F}_1(u,v)
        d\lambda_{z^{(n)}}(u)d\lambda_{z^{(n)}}(v)\right|
        \mathds{1}_{D_{n,K}^{[s]}}(z^{(n)})
    \right]=0. $$

We will now consider $\mathcal{F}_2$. Following the proof of Proposition \ref{LS_const}, we have
$$ \iint  \mathcal{F}_2(u,v) d\lambda_{z^{(n)}}(u)d\lambda_{z^{(n)}}(v) = 2 \sum_{|k|,|l|\geq 1} G_{k,l} \left(\int h_s \phi_k^{(1)} d\lambda_{z^{(n)}} \right) \left(\int \phi_l d\lambda_{z^{(n)}} \right). $$
By Proposition \ref{T_w_conv_rate}, there exists $c_2,c_3>0$ such that
    $$ \sup_{s\in [0,1]} \sup_{z^{(n)}\in D_{n,K}^{[s]}} \left|\int h_s\phi_k^{(1)}d\lambda_{z^{(n)}}\right|
        \leq c_2\sqrt{n} \sup_{s\in [0,1]} \|(h_s\phi_k^{(1)})^{(1)}\|_\infty
        \leq c_3 k^2\sqrt{n}. $$
For $|l|\,\in \{1,\dots,N\}$, we will apply \eqref{c_j_est} to $\int \phi_l d\lambda_{z^{(n)}}$, while for $|l|\,>N$, we will apply \eqref{cj_sqrtn_est}. Doing so, we obtain
\begin{align*}
    & \sup_{s\in[0,1]}\mathbb{E}_{n,\beta}^{[s]}\left[
        \left|\iint
        \mathcal{F}_2(u,v)
        d\lambda_{z^{(n)}}(u)d\lambda_{z^{(n)}}(v)\right|
        \mathds{1}_{D_{n,K}^{[s]}}(z^{(n)})
    \right] \\
    &\qquad\leq
        2c_3 \sqrt{n}
        \sum_{|k|\geq 1}\sum_{1\leq |l|\leq N}
            |G_{k,l}|k^2 C_{|l|}
        +2C c_3 n\sum_{\substack{|k|\geq 1,\\|l|>N}}
            |G_{k,l}|k^2|l|.
\end{align*}
Dividing by $n$, the first term tends to $0$ as $n\to\infty$. We can then let $N\to\infty$ to conclude that
    $$ \lim_{n\to\infty}\sup_{s\in[0,1]}
    \mathbb{E}_{n,\beta}^{[s]}\left[\frac{1}{n}
        \left|\iint
        \mathcal{F}_2(u,v)
        d\lambda_{z^{(n)}}(u)d\lambda_{z^{(n)}}(v)\right|
        \mathds{1}_{D_{n,K}^{[s]}}(z^{(n)})
    \right] = 0. $$
Combining both limits then proves \eqref{T_err_0}.
\end{proof}

Since Proposition \ref{lim_E_V}--\ref{est_V_glob} are satisfied, by Proposition \ref{LS_lim_s_proof}, Theorem \ref{av_s} is proven as well.

\subsection{Other representations of the limit} \label{S_RV}

In what follows, we will derive different representations for the right hand side of \eqref{int_ls}, independent of $h_s$. Crucial will be to have the differentiation rule \eqref{T_D} and integration by parts \eqref{IBP}, which hold for $\mathbb{T}$. We may derive similar results on contours $\tau$ with a more general differentiation rule \eqref{tau_D}, however then there will also be some contributions from the endpoints.
\medbreak

The first representation involves the real-valued $L^2$-basis.

\begin{prop} \label{V_coeff} 
    Let $f=f_0+\vec{f}^t\psi\in C^{1,\alpha}(\mathbb{T})$ and $g=\vec{g}^t\psi\in L^2_0(\mathbb{T},\mu_{\mathbb{T}})$. If $h_s:\mathbb{T}\to\R$ is the solution of the master equation \eqref{ME} given $g$, then
    $$ \int h_s f^{(1)} d\mu_{\mathbb{T}} = \frac{1}{\beta} \vec{f} L_s^{-1} \vec{g}^t. $$
\end{prop}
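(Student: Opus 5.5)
The plan is to compute both sides in coefficients. The starting point is the Fourier form \eqref{ME_FOUR} of the master equation, which was derived in the proof of Proposition \ref{ME_FOUR_SOL}. Write $h_s=\vec{h}_s\phi^t$ in the orthonormal basis $\phi$. Since $\psi=\Lambda^{\frac12}\phi^t$, the $\phi$-coefficient vectors of $f-f_0$ and $g$ are $\vec{f}\Lambda^{\frac12}$ and $\vec{g}\Lambda^{\frac12}$. The constant $f_0$ plays no role, because $f^{(1)}=(f-f_0)^{(1)}$.

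By the differentiation rule \eqref{T_D} (on $\mathbb{T}$ we have $\vec{d}_0=0$) we get $f^{(1)}=\vec{f}\Lambda^{\frac12}D\phi^t$. This lies in $L^2(\mathbb{T},\mu_{\mathbb{T}})$ because $f\in C^{1,\alpha}(\mathbb{T})$. Hence $\vec{f}\Lambda^{\frac12}D\in\ell^2$, and we may apply Parseval in the form $\int (\vec{a}\phi^t)(\vec{b}\phi^t)\,d\mu_{\mathbb{T}}=\vec{a}\vec{b}^t$. This gives
\[
\int h_s f^{(1)}\,d\mu_{\mathbb{T}}=\vec{h}_s D^t\Lambda^{\frac12}\vec{f}^t .
\]

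Next I rewrite the master equation \eqref{ME_FOUR} with $\vec{g}$ replaced by its $\phi$-coefficients $\vec{g}\Lambda^{\frac12}$:
\[
\tfrac1\beta\,\vec{g}\Lambda^{\frac12}=\vec{h}_sD^t\Lambda^{\frac12}L_s\Lambda^{\frac12}.
\]
Since $\Lambda^{\frac12}$ is diagonal with nonzero entries, I cancel the right factor entrywise to obtain $\vec{h}_sD^t\Lambda^{\frac12}L_s=\frac1\beta\vec{g}$. Proposition \ref{L_s_inv} (available via Proposition \ref{SGI_UC} under the standing regularity of $z_e$ and $V$) says that $L_s$ is boundedly invertible on $\ell^2$. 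Therefore
\[
\vec{h}_sD^t\Lambda^{\frac12}=\tfrac1\beta\,\vec{g}L_s^{-1}.
\]
Substituting into the Parseval identity gives $\int h_s f^{(1)}\,d\mu_{\mathbb{T}}=\frac1\beta\vec{g}L_s^{-1}\vec{f}^t$.

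Finally, $K^{\gamma,V}_{\mathbb{T}}$ is self-adjoint by Proposition \ref{SGI_UC}, so $L_s=I+sK^{\gamma,V}_{\mathbb{T}}$ and $L_s^{-1}$ are symmetric. Transposing the scalar then yields $\frac1\beta\vec{f}L_s^{-1}\vec{g}^t$.

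The main obstacle is justifying the formal manipulations with the unbounded diagonal factors $\Lambda^{\pm\frac12}$, and the step from $\vec{h}_sD^t\Lambda^{\frac12}L_s\Lambda^{\frac12}$ to $\vec{h}_sD^t\Lambda^{\frac12}L_s$. I would first make sure that $\vec{h}_sD^t\Lambda^{\frac12}\in\ell^2$. On $\mathbb{T}$ one has $\Lambda^{\frac12}D^t\Lambda^{\frac12}=\tilde{D}^t$, which is bounded and invertible, and Proposition \ref{ME_FOUR_SOL} gives $\vec{h}_s\Lambda^{-\frac12}\in\ell^2$. Hence $\vec{h}_sD^t\Lambda^{\frac12}=(\vec{h}_s\Lambda^{-\frac12})\tilde{D}^t\in\ell^2$, and this expression is exactly $\frac1\beta\vec{g}L_s^{-1}$ by the explicit formula for $\vec{h}_s\Lambda^{-\frac12}$ in Proposition \ref{ME_FOUR_SOL}. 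This uses $\vec{g}\in\ell^2$, which holds since $g$ has $\phi$-coefficients $\vec{g}\Lambda^{\frac12}$ with $g\in L^2$ and $\lambda_k\to0$. If needed, I would instead argue with finite truncations in $k$ and pass to the limit using boundedness of $L_s^{-1}$. All the pairings above are then $\ell^2$-pairings of square-summable sequences, so they converge absolutely.
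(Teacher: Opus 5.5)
Your argument is essentially the paper's: both write $\int h_s f^{(1)}\,d\mu_{\mathbb{T}}$ as a coefficient pairing and then substitute the Fourier form \eqref{ME_FOUR} of the master equation. The only difference is cosmetic: the paper expands $h_s$ in the $\psi$-basis and uses $D\Lambda=\Lambda^{\frac12}D\Lambda^{\frac12}$, whereas you expand in the $\phi$-basis, cancel $\Lambda^{\frac12}$ on the right, and transpose at the end using the symmetry of $L_s^{-1}$.

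One side remark is wrong, however. From $g\in L^2$ and $\lambda_k\to0$ you only get $\vec{g}\Lambda^{\frac12}\in\ell^2$, not $\vec{g}\in\ell^2$; the latter amounts to $g\in H^{1/2}(\mathbb{T})$. You should instead take $\vec{g}\in\ell^2$ from the hypothesis of Proposition \ref{ME_FOUR_SOL}, where it is phrased as $\vec{g}\Lambda^{-\frac12}\in\ell^2$ for the $\phi$-coefficients, and which is exactly what is needed for ``the solution $h_s$'' to exist in the first place.
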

\begin{proof}
Write $h_s = \vec{h}_s \psi^t$ and note that $f^{(1)} = \vec{f} D\psi^t$ so that
\begin{align*}
    &\int h_s g^{(1)} d\mu_{\mathbb{T}} = \int \vec{f} D\psi^t \psi \vec{h}_s^t d\mu_{\mathbb{T}} = \vec{f} D\Lambda \vec{h}_s^t,
\end{align*}
It then remains to use that $D\Lambda = \Lambda^{\frac{1}{2}} D\Lambda^{\frac{1}{2}}$ and
$$(\Lambda^{\frac{1}{2}} D\Lambda^{\frac{1}{2}})\vec{h}_s^t = \frac{1}{\beta} L_s^{-1} \vec{g}^t,$$
see \eqref{ME_FOUR}, to finish the proof.
\end{proof}

\begin{rem}
    Note that in order to obtain this representation, we only need to solve the master equation \eqref{ME_FOUR} for $D\vec{h}_s$, however for the change of variables argument in Section~\ref{proofs_strat} to go through we need to be able to solve for $\vec{h}_s$.
\end{rem}

The second representation involves the Dirichlet inner product on $\C\setminus\gamma$, see \eqref{Dir_IP}.

\begin{prop} \label{V_DE}
    Let $f,g\in C^{1,\alpha}(\gamma)$ with $f_0,g_0=0$ and let $h_1:\mathbb{T}\to\R$ be the solution of the master equation for $s=1$ given $gz_e$. Suppose that $z_e\in C^{4,\alpha}(\mathbb{T})$. Then,
        $$ \int h_1 \cdot (fz_e)^{(1)} d\mu_{\mathbb{T}} = \frac{2}{\beta} \mathcal{D}_{\C\setminus\gamma}(f,g).  $$
    In particular, 
    $$ \int h_1 \cdot (gz_e)^{(1)} d\mu_{\mathbb{T}} = \frac{2}{\beta} \mathcal{D}_{\C\setminus\gamma}(g).  $$
\end{prop}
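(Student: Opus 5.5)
The plan is to avoid the Fourier representation of Proposition \ref{V_coeff} and instead transport the master equation for $s=1$ back to $\gamma$. There it becomes a statement about the logarithmic potential of a signed measure, and the identity then follows from Green's formula.

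\textbf{Step 1: the master equation as a potential identity.} Let $G=gz_e$ and $F=fz_e$. By Proposition \ref{reg_h_s} with $m=1$ (this uses $z_e\in C^{4,\alpha}(\mathbb{T})$), the coefficients of $h_1$ decay fast enough that $h_1\in C^{1,\alpha}(\mathbb{T})$.

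Since $w_{\mathbb{T}}$ is constant and $\mathbb{T}$ has no endpoints, I integrate by parts in $v$ in the master equation \eqref{ME} with $s=1$. After symmetrizing, the principal value disappears and
$$\frac1\beta G(u) = -\int h_1^{(1)}(v)\,\mathcal{L}_1(u,v)\,d\mu_{\mathbb{T}}(v).$$
In $\mathcal{L}_1(u,v)$, the term $\tfrac12 (Vz_e)(u)$ is multiplied by $\int h_1^{(1)}d\mu_{\mathbb{T}}=0$, so it drops out. The term $\tfrac12(Vz_e)(v)$ contributes a constant $c$ independent of $u$.

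Now let $\nu=z_e\#(h_1^{(1)}\mu_{\mathbb{T}})$. This is a signed measure on $\gamma$ of total mass zero, with Hölder density with respect to $|dz|$ by Proposition \ref{ep_reg}. The identity above becomes
$$U_\nu = -\tfrac1\beta g + c \quad \text{on } \gamma,$$
where $U_\nu(z)=\int\log\frac{1}{|z-w|}\,d\nu(w)$.

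\textbf{Step 2: identifying $U_\nu$ off the curve.} Because $\nu$ has mass zero, $U_\nu$ is harmonic on $\C\setminus\gamma$ and tends to $0$ like $O(1/|z|)$ at $\infty$, so it is bounded there. It is also continuous across $\gamma$. Hence
$$U_\nu=\left(-\tfrac1\beta g+c\right)_{\rm ext}$$
on both components of $\C\setminus\gamma$, and its gradient there equals $-\tfrac1\beta\nabla g_{\rm ext}$.

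The density of $\nu$ is recovered from the jump of normal derivatives, exactly as in the proof of Proposition \ref{ep_reg} (via \cite[Thm. 1.5]{SaffTotik1997}):
$$d\nu=-\frac1{2\pi}\left(\partial_{\mathfrak n_+}U_\nu+\partial_{\mathfrak n_-}U_\nu\right)|dz|.$$

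\textbf{Step 3: conclusion via Green's formula.} Integrating by parts on $\mathbb{T}$ again and then transporting,
$$\int h_1 F^{(1)}\,d\mu_{\mathbb{T}} = -\int h_1^{(1)}F\,d\mu_{\mathbb{T}} = -\int f\,d\nu = \frac1{2\pi}\int_\gamma f\left(\partial_{\mathfrak n_+}U_\nu+\partial_{\mathfrak n_-}U_\nu\right)|dz|.$$
Next apply Green's identity $\int_\Omega \nabla f_{\rm ext}\cdot\nabla U_\nu = \int_{\partial\Omega} f\,\partial_{\rm out}U_\nu$ on the interior domain and on the exterior domain. On the exterior domain this is justified by truncating with a large circle, whose boundary term vanishes because $f_{\rm ext}$ is bounded and $\nabla U_\nu=O(|z|^{-2})$. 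The outward normal of each domain is the opposite of $\mathfrak n_\pm$ on that side, so
$$\int_\gamma f\left(\partial_{\mathfrak n_+}U_\nu+\partial_{\mathfrak n_-}U_\nu\right)|dz| = -\iint_{\C\setminus\gamma}\nabla f_{\rm ext}\cdot\nabla U_\nu \,dxdy.$$
Substituting $\nabla U_\nu=-\tfrac1\beta\nabla g_{\rm ext}$ gives
$$\int h_1 F^{(1)}\,d\mu_{\mathbb{T}} = \frac{1}{2\pi\beta}\iint \nabla f_{\rm ext}\cdot\nabla g_{\rm ext}\,dxdy=\frac2\beta\,\mathcal{D}_{\C\setminus\gamma}(f,g).$$
The constant $c$ never enters, since only gradients appear. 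The special case is obtained by taking $f=g$.

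\textbf{Main obstacle.} I expect the difficulty to lie in justifying Steps 1–2 rigorously rather than in the algebra:
\begin{itemize}
\item removing the principal value in the integration by parts;
\item checking that $\nu$ has a density regular enough for the jump formula and for Green's identity up to the $C^{1,\alpha}$ boundary (Kellogg's theorem applied to $f$ and $g$ gives $C^{1,\alpha}$ extensions);
\item keeping track of the orientation and sign conventions for $\mathfrak n_\pm$, where a sign error would flip the final answer.
\end{itemize}
As a cross-check on the constant, I would test the case $\gamma=\mathbb{T}$, $V=0$ with $g=\cos\theta$: there $h_1=\tfrac{2}{\beta}\sin\theta$, and both sides equal $\tfrac{2}{\beta}\cdot\tfrac12$.
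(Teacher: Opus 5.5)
Your proof is correct and follows the paper's argument essentially step by step. You integrate by parts on $\mathbb{T}$, push the $s=1$ master equation forward to a potential identity on $\gamma$ for a mass-zero signed measure (your $\nu$ is $-\tilde{\mu}_{\gamma,V}$ in the paper), recover its density from the jump of normal derivatives via Saff--Totik, and finish with Green's theorem; your orientation bookkeeping for $\mathfrak{n}_\pm$ is right.

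There are two small slips:
\begin{enumerate}
\item In your cross-check the solution is $h_1=-\frac{2}{\beta}\sin\theta$. With $+\frac{2}{\beta}\sin\theta$ the left-hand side would be $-\frac{1}{\beta}$, although the common value $\frac{1}{\beta}$ you quote is correct.
\item The decay $|\hat{h}_{1,k}|\lesssim |k|^{-1-\alpha}$ from Proposition~\ref{reg_h_s} with $m=1$ does not by itself give $h_1\in C^{1,\alpha}(\mathbb{T})$. The paper makes the same leap. A sound justification is that in \eqref{ME} the $\mathcal{L}_0$-part acts on $h_1$ as a multiple of the conjugate-function operator, while the $\mathcal{G}$-part is smoothing, so Privalov's theorem gives $h_1\in C^{1,\alpha}$.
\end{enumerate}
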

\begin{proof}
It follows from Proposition \ref{reg_h_s} that $h_1\in C^{1,\alpha}(\gamma)$. The integration by parts rule \eqref{IBP} allows us to write
    $$ \int h_1 \cdot (fz_e)^{(1)} d\mu_{\mathbb{T}} = - \int h_1^{(1)} \cdot (fz_e) d\mu_{\mathbb{T}},$$
or after using the fact that $\mu_{\gamma,V} = z_e\#\mu_{\mathbb{T}}$,
$$ \int h_1 \cdot (fz_e)^{(1)} d\mu_{\mathbb{T}} = \int f d\tilde{\mu}_{\gamma,V}(v),$$
where the (signed) measure $\tilde{\mu}_{\gamma,V}$ on $\gamma$ is defined through 
 $$ d\tilde{\mu}_{\gamma,V}(v) = -h_1^{(1)}(z_e^{-1}(v)) d\mu_{\gamma,V}(v). $$
We will now derive an alternative expression for $d\tilde{\mu}_{\gamma,V}$ from the master equation. After applying the integration by parts rule, the latter can be written as
     $$  \frac{1}{\beta}(gz_e)(u)= - {\rm p.v.} \int (h_1z_e)^{(1)}(v) \mathcal{L}_1(u,v) d\mu_{\mathbb{T}}(v),\quad u\in\mathbb{T}. $$
More precisely, the right hand side is given by
$$ - \int (h_1z_e)^{(1)}(v) \frac{1}{|z_e(u)-z_e(v)|} d\mu_{\mathbb{T}}(v) - \frac{1}{2} \int (h_1z_e)^{(1)} \cdot (Vz_e) d\mu_{\mathbb{T}}. $$
Since $\mu_{\gamma,V} = z_e\#\mu_{\mathbb{T}}$, the first term can be written as
$$- \int (h_1z_e)^{(1)}(z_e^{-1}(v)) \log \frac{1}{|z_e(u)-v|} d\mu_{\gamma,V}(v).$$
We can then reparametrize the master equation as
    $$  \frac{1}{\beta}g(u)= \int \log \frac{1}{|u-v|} d\tilde{\mu}_{\gamma,V}(v) - \frac{1}{2} \int h_1^{(1)} \cdot (Vz_e) d\mu_{\mathbb{T}} ,\quad u\in\gamma, $$
In that case, we have
    $$  \frac{1}{\beta}g(u)= U^{\tilde{\mu}_{\gamma,V}}(u) - \frac{1}{2} \int h_1^{(1)} (Vz_e) d\mu_{\mathbb{T}},\quad u\in\gamma. $$
As such, $g$ admits an extension that is bounded and harmonic on the connected components of $\C\setminus\gamma$, which we will denote $g_{\rm ext}$. Since $g\in C^{1}$, the same holds for $U^{\tilde{\mu}_{\gamma,V}}$, hence by \cite[Thm. II.1.5]{SaffTotik1997}, we have
    $$ d\tilde{\mu}_{\gamma,V}(z) =  - \frac{1}{2\pi} ((\partial_{\mathfrak{n}_+} U^{\tilde{\mu}_{\gamma,V}})(z) + (\partial_{\mathfrak{n}_-} U^{\tilde{\mu}_{\gamma,V}})(z)) |dz|. $$
As a consequence,
    $$ d\tilde{\mu}_{\gamma,V}(z) = -\frac{1}{2\pi\beta} (\partial_{\mathfrak{n}_+} g_{\rm ext} + \partial_{\mathfrak{n}_-} g_{\rm ext}) |dz|, $$
and thus
$$ \int (h_1z_e) \cdot (fz_e)^{(1)} d\mu_{\mathbb{T}} = -\frac{1}{2\pi\beta} \int_\gamma f_{\rm ext}\cdot (\partial_{\mathfrak{n}_+}g_{\rm ext} + \partial_{\mathfrak{n}_-}g_{\rm ext}) |dz|.$$
It then remains to apply Green's theorem.
\end{proof}

Finally, we require the following result.

\begin{prop} \label{repr_eq_dens}
    Suppose that $z_e\in C^{1,\alpha}(\mathbb{T})$. Then,
        $$ \log |z_e^{(1)}\circ z_e^{-1}| \, = \log \frac{1}{2\pi} - \log w_{\gamma,V}. $$
\end{prop}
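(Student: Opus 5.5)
The identity should follow directly from the pushforward relation $\mu_{\gamma,V}=z_e\#\mu_{\mathbb{T}}$, using the explicit derivative formula \eqref{z_e_der} for $z_e$. I will work in the arc-length parametrization $\tau(t)=e^{it}$, $t\in[0,2\pi]$, so that $|\tau'|=1$. In this parametrization $w_{\mathbb{T}}\equiv 1/(2\pi)$ and $F_{\mathbb{T}}(t)=t/(2\pi)$.

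By Proposition \ref{ep_reg}, $z_e\in C^{1,\alpha}(\mathbb{T})$, and $w_{\gamma,V}$ is continuous and strictly positive on $\gamma$. Hence $F_{\gamma,V}$ is $C^1$ with positive derivative, so $F_{\gamma,V}^{-1}$ is $C^1$. The chain rule in \eqref{z_e_der} therefore holds for every $t$, not just almost everywhere, and reads
$$ (z_e\tau)'(t)=\gamma'\big((F_{\gamma,V}^{-1}F_{\mathbb{T}})(t)\big)\,\frac{F_{\mathbb{T}}'(t)}{F_{\gamma,V}'\big((F_{\gamma,V}^{-1}F_{\mathbb{T}})(t)\big)}. $$

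Now I simplify each factor:
\begin{itemize}
\item $|\gamma'|=1$.
\item $F_{\mathbb{T}}'(t)=w_{\mathbb{T}}(\tau(t))=1/(2\pi)$.
\item $F_{\gamma,V}'(x)=w_{\gamma,V}(\gamma(x))$.
\item $\gamma\big((F_{\gamma,V}^{-1}F_{\mathbb{T}})(t)\big)=z_e(\tau(t))$ by definition \eqref{eq_par_def}.
\end{itemize}
By the convention \eqref{tan_der}, $z_e^{(1)}(\tau(t))=(z_e\tau)'(t)$. Putting these together gives
$$ |z_e^{(1)}(u)|=\frac{1}{2\pi\, w_{\gamma,V}(z_e(u))},\qquad u\in\mathbb{T}. $$
Substituting $u=z_e^{-1}(z)$, which is allowed because $z_e$ is a bijection by Proposition \ref{EP_AC}, and taking logarithms yields the claimed identity on all of $\gamma$.

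The only delicate point is the regularity needed to turn the almost-everywhere identity \eqref{z_e_der} into a pointwise one, so that the logarithm makes sense everywhere. This is handled by the positivity and continuity of $w_{\gamma,V}$ from Assumption \ref{ass1} together with Proposition \ref{ep_reg}. Alternatively, continuity of both sides upgrades an almost-everywhere identity to an everywhere one. The rest is a routine computation.
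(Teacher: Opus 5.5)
Your proof is correct and is essentially the paper's argument: both apply the chain rule to the explicit formula $z_e=\gamma F_{\gamma,V}^{-1}F_{\mathbb{T}}\tau^{-1}$ with $F_{\mathbb{T}}(t)=t/(2\pi)$. The only cosmetic difference is the direction of differentiation. The paper differentiates the inverse map $z_e^{-1}\gamma=e^{2\pi i F_{\gamma,V}}$ and uses $(z_e^{(1)}\circ z_e^{-1})\,(z_e^{-1})^{(1)}=1$, whereas you differentiate $z_e$ directly via \eqref{z_e_der} and then compose with $z_e^{-1}$.
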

\begin{proof}
Since $z_e\circ z_e^{-1} = {\rm Id}$, we have $(z_e^{(1)}\circ z_e^{-1})\cdot
(z_e^{-1})^{(1)} = 1$. By definition, see \eqref{eq_par_def}, we have $z_e^{-1} = (\tau F_{\tau}^{-1} F_{\gamma,V}\gamma^{-1})$ for $\tau=\mathbb{T}$. For $\tau=\mathbb{T}$, $\tau(t)=e^{it}$ and $F_{\tau}(t)=t/(2\pi)$ and thus $(z_e^{-1}\gamma) = e^{2\pi i F_{\gamma,V}}$. Consequently, 
        $$ (z_e^{-1})^{(1)}(\gamma(t)) = (z_e^{-1}\gamma)'(t) = e^{2\pi i F_{\gamma,V}(t)} 2\pi i F_{\gamma,V}'(t) = e^{2\pi i F_{\gamma,V}(t)} 2\pi i (w_{\gamma,V}\gamma)(t), $$
which then leads to the stated result.
\end{proof}

\subsection{Proof of Theorem \ref{PF} \& \ref{ThmSz}} \label{proofs_mr}

\begin{proof}[Proof of Theorem \ref{ThmSz}]
First replace $g$ by $g-\int g d\mu_{\gamma,V}$. This centers the linear statistic and leaves the Dirichlet pairings unchanged. We can then apply Theorem \ref{av_s} to $gz_e$ with $s=1$. By Proposition~\ref{V_DE}, we then have
\begin{align*}
    \lim_{n\to\infty} \log \mathbb{E}_{n,\beta}^{\gamma,V}\left[e^{\sum_{k=1}^n g(z_k)}\right] = & (\frac{2}{\beta}-1) \mathcal{D}_{\C\setminus\gamma}(g,\log|z_e^{(1)}\circ z_e^{-1}|) + \frac{1}{\beta} \mathcal{D}_{\C\setminus\gamma}(g).
\end{align*}
It remains to use Proposition \ref{repr_eq_dens} to conclude the desired result.
\end{proof}

For technical reasons, in order to prove Theorem \ref{PF}, we require the following result.
\begin{prop} \label{s_CLT_MV}
    Suppose that $z_e\in C^{7,\alpha}(\mathbb{T})$. For all $g\in C^{4,\alpha}(\mathbb{T})$ with $\int g d\mu_{\mathbb{T}}=0$, there exists $C(\|g\|_{4,\alpha})>0$ such that for all $s\in[0,1]$ and $n\in\N$,
        $$ \mathbb{E}_{n,\beta}^{[s]}\Big[ \sum_{k=1}^n g(z_k)\Big] \leq C(\|g\|_{4,\alpha}),\quad \mathbb{E}_{n,\beta}^{[s]}\Big[\Big(\sum_{k=1}^n g(z_k)\Big)^2\Big] \leq C(\|g\|_{4,\alpha}).$$
\end{prop}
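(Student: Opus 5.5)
The bounds will follow from the uniform exponential moment bound of Proposition \ref{s_LS_const}, applied to scalar multiples of $g$. For $t\in\R$ with $|t|\leq 1$, the function $tg$ lies in $C^{4,\alpha}(\mathbb{T})$, has mean zero with respect to $\mu_{\mathbb{T}}$, and satisfies $\|tg\|_{4,\alpha}\leq\|g\|_{4,\alpha}$. The constant $C(\cdot)$ in Proposition \ref{s_LS_const} comes from Propositions \ref{reg_h_s}, \ref{Taylor_err} and \ref{LS_const}, where it depends only on upper bounds for the norm of the test function. So I will first check that it may be taken nondecreasing in $\|g\|_{4,\alpha}$; if necessary I replace it by $\sup_{\|f\|_{4,\alpha}\leq \|g\|_{4,\alpha}}C(\|f\|_{4,\alpha})$, which is finite by tracing the linear dependence of $h_s$ on $g$ in Proposition \ref{reg_h_s}. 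This gives
\[
\sup_{n\in\N}\sup_{s\in[0,1]}\sup_{|t|\leq 1}\mathbb{E}_{n,\beta}^{[s]}\Big[e^{t\sum_{k=1}^n g(z_k)}\Big]\leq C(\|g\|_{4,\alpha}).
\]

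Write $X_n=\sum_{k=1}^n g(z_k)$. For the second moment I use the elementary inequality $x^2\leq 2(e^{x}+e^{-x})$, valid for all real $x$ because $\cosh x\geq 1+x^2/2$. Applying the bound above with $t=\pm1$ gives
\[
\mathbb{E}_{n,\beta}^{[s]}[X_n^2]\leq 2\big(\mathbb{E}_{n,\beta}^{[s]}[e^{X_n}]+\mathbb{E}_{n,\beta}^{[s]}[e^{-X_n}]\big)\leq 4C(\|g\|_{4,\alpha}),
\]
uniformly in $s\in[0,1]$ and $n$. For the first moment, either use $|x|\leq e^x+e^{-x}$ in the same way, or use Jensen's inequality, $e^{\mathbb{E}[X_n]}\leq \mathbb{E}[e^{X_n}]\leq C$, so that $\mathbb{E}[X_n]\leq \log C$. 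Either way the bound is uniform in $s$ and $n$, and enlarging the constant gives the statement.

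The main point to verify is that Proposition \ref{s_LS_const} holds under the present hypothesis, stated only as $z_e\in C^{7,\alpha}(\mathbb{T})$, and that its constant is uniform in $s$. Uniformity in $s$ is built into that proposition. The regularity of $V\circ z_e$ needed there, $V\in C^{6,\alpha}$, is implicit in the standing setting of Theorem \ref{PF}, where $V\in C^{11,\alpha}(\gamma)$. So the remaining step is just to check that every ingredient of that proof (Propositions \ref{reg_h_s}, \ref{est_D_prop} and \ref{LS_const}) is used with $g$ replaced by $\pm g$, which changes no norms.
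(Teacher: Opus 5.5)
Your argument is correct and is essentially the paper's: both reduce everything to the uniform exponential-moment bound of Proposition~\ref{s_LS_const} applied to $\pm g$. The paper extracts $\mathbb{E}_{n,\beta}^{[s]}[X_n]$ and $\mathbb{E}_{n,\beta}^{[s]}[X_n^2]$ from the bounded analytic function $z\mapsto\mathbb{E}_{n,\beta}^{[s]}[e^{zX_n}]$ on $|z|\leq 1$ via Cauchy's integral formula, whereas you use the pointwise bounds $|x|\leq e^x+e^{-x}$ and $x^2\leq 2(e^x+e^{-x})$, which is slightly more direct. Your preliminary step about monotonicity of $C(\cdot)$ and the supremum over $|t|\leq 1$ is unnecessary, since you only ever use $t=\pm 1$, where $\|\pm g\|_{4,\alpha}=\|g\|_{4,\alpha}$.
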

\begin{proof}
    Consider
    $$ \p_{n,\beta}^{[s]}(z) = \mathbb{E}_{n,\beta}^{[s]}[e^{z \sum_{k=1}^{n} g(z_k)}],\quad |z|\leq 1. $$
    Note that
        $$ |\p_{n,\beta}^{[s]}(z)| \leq \mathbb{E}_{n,\beta}^{[s]}[e^{\RP{z} \sum_{k=1}^{n} g(z_k)}] \leq \mathbb{E}_{n,\beta}^{[s]}[e^{\pm \sum_{k=1}^{n} g(z_k)}] $$
    Hence, by Proposition \ref{s_LS_const}, there exists $C_7(\|g\|_{4,\alpha})>0$ such that for all $|z|\leq 1$,
        $$ |\p_{n,\beta}^{[s]}(z)| \leq C_7(\|g\|_{4,\alpha}). $$
    Since 
        $$ |(\p_{n,\beta}^{[s]})'(0)| = \mathbb{E}_{n,\beta}^{[s]}\Big[ \sum_{k=1}^n g(z_k)\Big],\quad |(\p_{n,\beta}^{[s]})''(0)| = \mathbb{E}_{n,\beta}^{[s]}\Big[\Big(\sum_{k=1}^n g(z_k)\Big)^2\Big] , $$
    the desired result then follows from Cauchy’s integral formula.        
\end{proof}

We are now ready to show Theorem \ref{PF}.

\begin{proof}[Proof of Theorem \ref{PF}] 
Our proof is based on the fact that
    $$ \log \frac{Z_{n,\beta}^{\gamma,V}}{Z_{n,\beta}^{\mathbb{T},0}} = \int_0^1 \frac{\partial_s Z_{n,\beta}^{[s]}}{Z_{n,\beta}^{[s]}} ds, $$
similarly as in \cite[\S 5.3]{CourteautJohanssonViklund2026}. It follows from \eqref{s_Z} and \eqref{H_n^s_alt} that
\begin{align*}
\frac{\partial_s Z_{n,\beta}^{[s]}}{Z_{n,\beta}^{[s]}} 
&= -\frac{\beta}{2} \mathbb{E}_{n,\beta}^{[s]} \Big[ \sum_{\substack{k,l=1 \\ k \neq l}}^n \mathcal{G}(z_k, z_l) + \sum_{k=1}^n (Vz_e)(z_k) \Big] + \mathbb{E}_{n,\beta}^{[s]} \Big[ \sum_{k=1}^n \log |z_e^{(1)}(z_k)| \Big] \\
&= -\frac{\beta}{2} \mathbb{E}_{n,\beta}^{[s]} \Big[ \sum_{k,l=1}^n \mathcal{G}(z_k, z_l) \Big] + \mathbb{E}_{n,\beta}^{[s]} \Big[ \sum_{k=1}^n m(z_k) \Big],
\end{align*}
in terms of $ m = (1-\frac{\beta}{2})\log|z_e^{(1)}|$. Denote $X_k(z) = \sum_{j=1}^n \psi_k(z_j)$ and expand $m = m_0 + \sum_{|k| \ge 1} m_k \psi_k$, then
\begin{equation*}
\frac{\partial_s Z_{n,\beta}^{[s]}}{Z_{n,\beta}^{[s]}} = -\frac{\beta}{2} \mathbb{E}_{n,\beta}^{[s]} \Big[ \sum_{|k|,|l| \geq 1} K_{k,l} X_k X_l \Big] + \mathbb{E}_{n,\beta}^{[s]} \Big[ \sum_{|k| \geq 1} m_k X_k \Big] + M_n,
\end{equation*}
where $M_n = -\frac{\beta}{2}n^2 G_{0,0} + n m_0 $. At this point, we would like to interchange the expectations and series, pass the limit through the initial integral and then through the series. In order to justify this, we will use Fubini's theorem, the dominated convergence theorem and appropriate bounds on $K_{k,l}$, $m_k$ and $\mathbb{E}_{n,\beta}^{[s]} [X_kX_l]$ and $\mathbb{E}_{n,\beta}^{[s]} [X_k]$. The starting point is to use Proposition \ref{s_CLT_MV}. In order to apply it, we need normalize $X_k$ such that $\|\cdot\|_{4,a}=1$, say with $a=\alpha/4$. Clearly, $\|\psi_k\|_{4,a} \leq c_1 |k|^{4+a-\frac{1}{2}}$ and therefore
\begin{align*}
    \mathbb{E}_{n,\beta}^{[s]} [|X_k X_l|] &\leq \mathbb{E}_{n,\beta}^{[s]} [X_k^2]^{\frac{1}{2}} \mathbb{E}_{n,\beta}^{[s]} [X_l^2]^{\frac{1}{2}} \leq c_1^2 C(1) |kl|^{4+a-\frac{1}{2}}, \\
    \mathbb{E}_{n,\beta}^{[s]} [X_k] &\leq c_1 C(1) |k|^{4+a-\frac{1}{2}}.
\end{align*}
Whenever $z_e\in C^{11,\alpha}$ and $m\in C^{5,\alpha}$, we can then use Proposition \ref{reg_G} to obtain the estimates
    $$ |K_{k,l}| \leq 2 |kl|^{\frac{1}{2}} \frac{C_{5+\frac{\alpha}{2},5+\frac{\alpha}{2}}}{|kl|^{5+\frac{\alpha}{2}}},\quad |m_k| \leq |k|^{\frac{1}{2}} \frac{1}{|k|^{5+\alpha}}, $$
Denote $g_{a,b} = a X_k + b X_l$ and note that
\begin{align*}
\mathbb{E}_{n,\beta}^{[s]} [ X_k X_l ] = \left. \partial_a \partial_b \mathbb{E}_{n,\beta}^{[s]} [e^{g_{a,b}}]\right|_{a,b=0},\quad \mathbb{E}_{n,\beta}^{[s]} [ X_k ] = \left. \partial_a \mathbb{E}_{n,\beta}^{[s]} [e^{g_{a,0}}] \right|_{a=0}.
\end{align*}

By Theorem \ref{av_s} and Proposition \ref{V_coeff}, we obtain
\begin{align*}
\lim_{n \to \infty} \mathbb{E}_{n,\beta}^{[s]} [ X_k X_l ] 
& = \left. \partial_a \partial_b \exp \Big( \frac{1}{2\beta} \vec{g}_{a,b}^t L_s^{-1} \vec{g}_{a,b} +  \frac{s}{\beta} \vec{m}^t L_s^{-1} \vec{g}_{a,b} \Big) \right|_{a,b=0} \\
&= \frac{1}{\beta} \vec{e}_k^{\,t} L_s^{-1} \vec{e}_l + \frac{s^2}{\beta^2} (\vec{m} L_s^{-1} \vec{e}_k) \cdot (\vec{m}^t L_s^{-1} \vec{e}_l), \\
\lim_{n \to \infty} \mathbb{E}_{n,\beta}^{[s]} [ X_k ] 
&= \left. \partial_a \exp \Big( \frac{a^2}{2\beta} \vec{e}_k^{\,t} L_s^{-1} \vec{e}_k + \frac{as}{\beta} \vec{m} L_s^{-1} \vec{e}_k) \Big) \right|_{a=0} \\
&= \frac{s}{\beta} \vec{m} L_s^{-1} \vec{e}_k.
\end{align*}

Therefore,
\begin{align*}
\lim_{n \to \infty} &\left(\frac{\partial_s Z_{n,\beta}^{[s]}}{Z_{n,\beta}^{[s]}}-M_n\right) \\
&= -\frac{1}{2} \sum_{k,l \ge 1} K_{kl} (L_s^{-1})_{kl} 
- \frac{s^2}{2\beta} \sum_{k,l \ge 1} K_{kl} \big(\vec{m} L_s^{-1} \big)_k \cdot \big(\vec{m} L_s^{-1} \big)_l + \frac{s}{\beta} \vec{m} L_s^{-1} \vec{m}^t \\
&= -\frac{1}{2} \operatorname{Tr} (L_s^{-1} K) - \frac{s^2}{2\beta} \vec{m} L_s^{-1} K L_s^{-1} \vec{m}^t +  \frac{s}{\beta} \vec{m} L_s^{-1} \vec{m}^t.
\end{align*}
By making use of the fact that
    $$ \partial_s \log\det L_s = \operatorname{Tr}(L_s^{-1} K),\quad \partial_s L_s^{-1} = -L_s^{-1} K L_s^{-1}, $$
we can write the right hand side as
    $$ \partial_s \left[ -\frac{1}{2} \log\det L_s + \frac{s^2}{2\beta} \vec{m} L_s^{-1} \vec{m}^t \right]. $$
As a consequence,
\begin{align*}
\lim_{n\to\infty} \log \frac{\bar{Z}_{n,\beta}^{\gamma,V}}{\bar{Z}_{n,\beta}^{\mathbb{T},0}}  &= -\frac{1}{2} \log\det L_1 + \frac{1}{2} \log\det L_0 + \frac{1}{2\beta} \vec{m} L_1^{-1} \vec{m}^t.
\end{align*}
By Proposition \ref{V_coeff} and Proposition \ref{V_DE}, we have
    $$ \frac{1}{2\beta} \vec{m} L_1^{-1} \vec{m}^t = \frac{1}{2}\int h_1 m^{(1)} d\mu_\tau =\frac{1}{\beta} \mathcal{D}_{\C\setminus\gamma}(m\circ z_e^{-1}), $$
where $h$ is the solution of the master equation \eqref{ME} given $g=m$. After an application of Proposition \ref{repr_eq_dens}, this finishes the proof.
\end{proof}

\section*{Acknowledgments}

The work of K.J. and T.W. was supported by the grant KAW 2023.0216 from the Knut and Alice Wallenberg Foundation. K.J. was also supported by a grant from the Swedish Research Council (VR). We thank Daniel Ofner for many helpful discussions about the equilibrium parametrization. We are also grateful to Fredrik Viklund for providing some comments that helped improve the presentation of Section 1--3.

\end{document}